	\theoremstyle{nonumberplain}
	\newtheorem{proof}{Proof.}
\newtheorem{theorem}{Theorem}[section]
\newtheorem{lemma}{Lemma}[section]
\newtheorem{prop}{Proposition}[section]
\newtheorem{definition}{Definition}[section]
\newtheorem{coro}{Corollary}[section]
\newtheorem{conj}{Conjecture}[section]
\crefname{equation}{Problem}{Problems}
\crefname{section}{Section}{Sections}
\crefname{lemma}{Lemma}{Lemmas}
\crefname{coro}{Corollary}{Corollaries}
\crefname{conj}{Conjecture}{Conjectures}
\crefname{prop}{Proposition}{Propositions}
\crefname{claim}{Claim}{Claims}
\crefname{defn}{Definition}{Definitions}
\crefname{remark}{Remark}{Remarks}
\crefname{exm}{Example}{Examples}
\crefname{theorem}{Theorem}{Theorems}
	\newtheorem{remark}{Remark}[section]
\numberwithin{equation}{section} 
\begin{document}
	\title{\bf\large Bifurcation on Fully Nonlinear Elliptic Equations and Systems\thanks{Supported by National Natural Science Foundation of China (11771428, ????????)} 
	\date{}
	\author{{\small Jing Gao$^1$\thanks{Email: gaojing181@mails.ucas.ac.cn}~~~ Weijun Zhang$^{2}$\thanks{Email: zhangweij3@mail.sysu.edu.cn}~~~Zhitao Zhang$^{1,3,4}$\thanks{Corresponding author. Email: zzt@math.ac.cn}}\\
    {\small $^1$  School of Mathematical Sciences, }\\
    {\small University of Science and Technology of China, Hefei 230041, People's Republic of China,}\\
    {\small $^2$ School of Mathematics, Sun Yat-sen University, Guangzhou 510275, People's Republic of China,}\\
    {\small $^3$ School of Mathematical Sciences, }\\
    {\small University of Chinese Academy of Sciences, Beijing 100049, People's Republic of China,}\\
    {\small $^4$ School of Mathematical Science, Jiangsu University, Zhenjiang 212013, People's Republic of China.}}
}
	\maketitle
	
	\abstract
	{\scriptsize
		In this paper, we study the following fully nonlinear elliptic equations
		\begin{center}	
			$\begin{cases}
				(S_{k}(D^{2}u))^{\frac{1}{k}}=\lambda f(-u)\qquad&in~\Omega,\\
				u=0\qquad&on~\partial\Omega,
			\end{cases}	$
		\end{center} 
	  and coupled systems
    	\begin{center}	
    		$\begin{cases}
    			(S_{k}(D^{2}u))^{\frac{1}{k}}=\lambda g(-u,-v)\qquad&in~\Omega,\\
    			(S_{k}(D^{2}v))^{\frac{1}{k}}=\lambda h(-u,-v)\qquad&in~\Omega,\\
    			u=v=0\qquad&on~\partial\Omega,
    		\end{cases}	$
    	\end{center} 
        dominated by $k$-Hessian operators, where $\Omega$ is a $(k$-$1)$-convex bounded domain in $\mathbb{R}^{N}$, $\lambda$ is a non-negative parameter, $f:\left[0,+\infty\right)\rightarrow\left[0,+\infty\right)$ is a continuous function with zeros only at $0$ and $g,h:\left[0,+\infty\right)\times \left[0,+\infty\right)\rightarrow \left[0,+\infty\right)$ are continuous functions with zeros only at $(\cdot,0)$ and $(0,\cdot)$. We determine the interval of $\lambda$ about the existence, non-existence, uniqueness and multiplicity of $k$-convex solutions to the above problems according to various cases of $f,g,h$, which is a complete supplement to the known results in previous literature. In particular, the above results are also new for Laplacian and Monge-Amp\`ere operators. We mainly use bifurcation theory, a-priori estimates, various maximum principles and technical strategies in the proof.}

    \vskip 0.2in
    {\bf Key words:} $k$-Hessian; Monge-Amp\`ere; Systems; Bifurcation.\\
    \vskip 0.02in
    {\bf AMS Subject Classification(2010):} 35J47, 35J60, 35B06.
    
	\section{Introduction}

The goal of this paper is to study the bifurcation of $k$-Hessian equation and coupled $k$-Hessian system. We mainly focus on the following Dirichlet Problem of $k$-Hessian equation
\begin{equation}
    \label{eq:SK}
	\begin{cases}
		\left(S_{k}(D^{2}u)\right)^{\frac1k}=\lambda f(-u)\qquad&in\quad\Omega\\
			u=0 \qquad&on\quad \partial\Omega
	\end{cases}
\end{equation}	
where $D^{2}u=(\frac{\partial^{2}u}{\partial x_{i}\partial x_{j}})$ is the Hessian matrix of u, $\Omega$ is a bounded $(k$-$1)$-convex domain of $\mathbb{R}^{N}$ with $N\ge 1$, $k=1,2,...,N$, $\lambda$ is a positive parameter and $f:\left[0,+\infty\right)\rightarrow\left[0,+\infty\right)$ is a continuous function with zeros only at $0$. 
 
As a special case, we will also discuss about the following Dirichlet Problem of Monge-Amp\`ere equation
\begin{equation}
    \label{eq:MA}
	\begin{cases}
		(\det(D^{2}u))^{\frac{1}{N}}=\lambda f(-u)\qquad&in\quad\Omega\\
			u=0\qquad&on \quad\partial\Omega
	\end{cases}
\end{equation}
where $\Omega$ is a bounded uniformly convex domain in $\mathbb{R}^{N}$.

We explore the existence, non-existence, uniqueness and multiplicity of the solutions to \cref{eq:SK}. We use a new method so that we do not need the monotonic conditions of $f$ as in \cite{luo_global_2020}. Especially, we get a more general result about the existence, non-existence, uniqueness and multiplicity of Monge-Amp\`ere equation case as \cref{eq:MA}.  

We will also discuss the analogous results for the case of coupled $k$-Hessian systems as followed,
\begin{equation}
	\label{eq:SKS}	
	\begin{cases}
    		(S_{k}(D^{2}u))^\frac1k=\lambda g(-u,-v) \qquad& in\quad\Omega \\  
    		(S_{k}(D^{2}v))^\frac1k=\lambda h(-u,-v)\qquad& in\quad\Omega \\  
    		u=v=0                 \qquad& on\quad \partial\Omega
    \end{cases}
\end{equation} 
where $\Omega$ is a uniformly $(k$-$1)$-convex domain in $\mathbb{R}^{N}$, $\lambda\geq0$, and $g,h:\left[0,+\infty\right)\times\left[0,+\infty\right)\rightarrow\left[0,+\infty\right)$ being continuous, with zeros only at $(0,\cdot)$ and $(\cdot,0)$. 

\subsection{Backgrounds}
Bifurcation theory can explain various phenomena in the natural sciences and physics, for example, when a given physical parameter crossing a threshold, and then it forces the system to organize a new state which differs considerably from that observed before. 

Mathematically speaking, when we study the branch of solutions, if it exists of nonlinear equations, the Implicit Function Theorem tells us if a continuous branch of the stable solutions of nonlinear equations with parameter preserves stability, there is no dramatic change when the parameter is varied. However, if the "ground state" loss its stability when the parameter reaches a critical value, then the system will organizes a new stable state which bifurcating from the ground state. In other words, bifurcation arises from studying equations with parameter to find another solution branching from the known solution branch as the parameter changes. The history and development of bifurcation can be referred to \cite{chow_methods_1982,golubitsky_singularities_1985,golubitsky_singularities_1988,kielhofer_bifurcation_2012,zhang_variational_2013}.

One aspect concerned in the study of bifurcation is the equation with one parameter. For instance, the following model equation
\begin{equation}\label{B1}
    f(x,\lambda)=0.
\end{equation}
There are many mathematicians studying such bifurcation problem of elliptic equations.

The mainly concerned one in this paper is the Rabinowitz global bifurcation of the $k$-Hessian operators. The $k$-Hessian operator can be regarded as the bridge between the linear elliptic equation and the fully nonlinear elliptic equation. Since the $1$-Hessian operator is just the Laplacian and the $N$-Hessian operator is the well-known Monge-Amp\`ere operator. The Hessian equation constitute an important class of fully nonlinear equations. The study of $k$-Hessian equation has many important applications in differential geometry and other related fields. There are a large amount of papers in the literature on the existence, regularity and the qualitative and the qualitative properties of solutions for the $k$-Hessian equation; see \cite{trudinger_dirichlet_1990,trudinger_dirichlet_1995,trudinger_hessian_1997,caffarelli_dirichlet_1985,tso_remarks_1990,wang_class_1994,hou_second_2010} and the references therein.

The first bifurcation result for the Monge-Amp\`ere equation can be found in \cite{jacobsen1999}, in which they considered about
\begin{equation*}
	\begin{cases}
		(\det(D^{2}u))^{\frac1n}=f(\lambda, u)\qquad&in\quad\Omega\\
		u=0\qquad&on \quad\partial\Omega
	\end{cases}
\end{equation*}
for the model as $f(\lambda,u)=|\lambda u|+g(u)$, where $g:\mathbb{R}\rightarrow [0,\infty)$ is continuous and $\lambda$ is a parameter. After that, \cite{zhang_existence_2009} study the bifurcation phenomenon for $f(\lambda,u)=e^{-\frac{\lambda}n u}$ in smooth bounded convex domain, in which they determined the $\lambda$ interval for the existence, uniqueness and nonexistence of the solutions. \cite{dai_eigenvalue_2015} also consider the global bifurcation of radial convex solution and \cite{luo_global_2020} investigate the existence, non-existence of the solutions for almost all cases of $f_0$ and $f_{\infty}$ except the case of $f_{0}=0$ with $f_{\infty}=0$, but they need monotonicity condition about $f$, which could be removed as we can see in \cref{thm:MA2}. As for coupled Monge-Amp\`ere system, \cite{zhang_power-type_2015} study the existence, uniqueness and non-existence of the following power-type coupled system
\begin{equation*}
	\begin{cases}
		\det(D^{2}u)=(-v)^\alpha \qquad&in\quad\Omega\\
  	\det(D^{2}v)=(-u)^\beta \qquad&in\quad\Omega\\
   u<0,~v<0 \qquad&in\quad\Omega\\
		u=v=0\qquad&on \quad\partial\Omega
	\end{cases}
\end{equation*}
where $\Omega$ is a smooth, bounded, strictly convex domain in $R^{N}$, $N\ge 2$, $\alpha>0,~\beta>0$. The detailed results can be found there and the references therein. \cite{qi_nontrivial_2016} also get more existence results for a general system of Monge-Amp\`ere by bifurcation theory.

As for general $k$-Hessian case, the first result is \cite{jacobsen_global_1999}, in which they studied the global bifurcation phenomena of the following equation,
\begin{equation*}
	\begin{cases}
		(S_k(D^{2}u))^{\frac1k}=\vert \lambda u\vert+g(u)\qquad&in\quad\Omega\\
		u=0\qquad&on \quad\partial\Omega
	\end{cases}
\end{equation*}
where $g:\mathbb{R}\rightarrow [0,\infty)$ is continuous and $\lambda$ is a parameter. Furthermore, \cite{dai_bifurcation_2017} study the behavior of global bifurcation continuum of radial admissible solution of \cref{eq:SK} when $\Omega=B$, the unit ball of $\mathbb{R}^{N}$, $N\ge1$. And \cite{dai_global_2018} investigate the global structure of admissible solutions for \cref{eq:SK} in three cases: $f_{0}\in (0,+\infty)$ with $f_{\infty}=0$; $f_{0}=+\infty$ with $f_{\infty}=0$; $f_{0}=0$ with $f_{\infty}\in (0,+\infty)$, we will deal with the remaining cases in \cref{thm:SK1}. 

\subsection{Main Results}

In this paper, we study the existence, non-existence , uniqueness and multiplicity of \cref{eq:SK,eq:MA,eq:SKS} by bifurcation theory, prior estimates, maximum principles and some technical strategies. Employing the a-priori estimate and global bifurcation theory we investigate the global bifurcation phenomena of admissible solution as parameter varies, and then we get the existence of nontrivial admissible solutions. we also study the non-existence, uniqueness and multiplicity by contradiction, which mainly thanks to various maximum principles. 

Denote
\begin{center}
   $f_{0}:=\underset{s\rightarrow 0^{+}}{lim}\frac{f(s)}{s},~f_{\infty}:=\underset{s\rightarrow +\infty}{lim}\frac{f(s)}{s}.$
\end{center}
Recall that $\lambda_{1}$ be the first eigenvalue of $k$-Hessian operator in $\Omega$. 

Our first main results are the following.

\begin{theorem}\label{thm:SK1} 
   Given $f$ satisfying various cases, \cref{eq:SK} must has at least one $k$-admissible solution, provided $\lambda\in$
\begin{table}[!htbp]
\renewcommand{\arraystretch}{1.2}
\centering
\begin{tabular}{|c|c|c|c|}
  \hline
  \diagbox{$f_0$}{$f_\infty$}  & $0$ & $(0,+\infty)$ & $+\infty$ \\
  \hline
  $0$ &  & $(\frac{\lambda_1}{f_\infty},+\infty)$ & $(0,+\infty)$ \\
  \hline
  $(0,+\infty)$ & $(\frac{\lambda_1}{f_0},+\infty)$ & $(\min\{\frac{\lambda_1}{f_0},\frac{\lambda_1}{f_\infty}\},\max\{\frac{\lambda_1}{f_0},\frac{\lambda_1}{f_\infty}\})$ & $(0,\frac{\lambda_1}{f_0})$ \\
  \hline
  $+\infty$ & $(0,+\infty)$ & $(0,\frac{\lambda_1}{f_\infty})$ & \\ 
  \hline
\end{tabular} 
\end{table}
\end{theorem}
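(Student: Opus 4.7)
The plan is to apply Rabinowitz global bifurcation theory together with a-priori estimates, working in the cone of $k$-admissible functions. First, I reformulate \cref{eq:SK} as a fixed-point equation $u = \lambda\,\mathcal{K}(f(-u))$, where $\mathcal{K}$ denotes the (positively homogeneous of degree $1$) solution operator that sends $\varphi \geq 0$ to the unique $k$-admissible $w$ with $(S_k(D^2 w))^{1/k} = \varphi$ in $\Omega$ and $w = 0$ on $\partial\Omega$. By the regularity theory for $k$-Hessian equations (Trudinger--Wang and Caffarelli--Nirenberg--Spruck, referenced in the excerpt) $\mathcal{K}$ is compact and order-preserving on an appropriate Banach space (say $C^{1,\alpha}_0(\overline\Omega)$), so standard bifurcation machinery applies. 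The trivial branch is $\{(\lambda,0):\lambda\ge 0\}$, and the critical parameter is determined by Wang's eigenvalue: the linearization at $0$ (when $f_0\in(0,+\infty)$) has eigenvalue $\lambda_1/f_0$; the linearization at $\infty$ (when $f_\infty\in(0,+\infty)$) has eigenvalue $\lambda_1/f_\infty$.

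In each of the nine cases I treat bifurcation either from zero or from infinity, invoking Rabinowitz's theorem (resp.\ its bifurcation-from-infinity counterpart due to Rabinowitz--Schmitt) to obtain an unbounded continuum $\mathcal{C}$ of nontrivial $k$-admissible solutions. Concretely: when $f_0\in(0,+\infty)$ a continuum emanates from $(\lambda_1/f_0,0)$; when $f_\infty\in(0,+\infty)$ a continuum emanates from $(\lambda_1/f_\infty,\infty)$; when both are finite and positive, a standard connectivity argument shows the same continuum carries both phenomena, so its projection on the $\lambda$-axis is the open interval between $\lambda_1/f_0$ and $\lambda_1/f_\infty$. For limit cases where $f_0$ or $f_\infty$ equals $0$ or $+\infty$, I would truncate $f$ from above and below by $f_m(s):=\min\{\max\{f(s),\varepsilon s\},M s\}$ (with $\varepsilon\to 0$ or $M\to\infty$), apply the finite-positive-limit case to $f_m$, and pass to the limit using uniform a-priori estimates on $\lambda$-compacts; the cone structure forces the limiting solutions to remain $k$-admissible and nontrivial.

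The a-priori estimates are the technical core. For any $\lambda$ in the claimed interval one must rule out both (i) $\|u_n\|\to\infty$ along $\mathcal{C}$ and (ii) $\|u_n\|\to 0$. These are handled by comparing $f$ with the eigenfunction of the $k$-Hessian: if $\lambda f(-u)\ge \mu\,\varphi_1\,(-u)$ for some $\mu > \lambda_1$ on the relevant range of $u$, testing with $\varphi_1$ (via the divergence-form/integration-by-parts identity for $S_k$) forces a contradiction, which excludes solutions of that size. In the sub-case $f_0\in(0,+\infty),\ f_\infty = +\infty$ for instance, small $\lambda$ admits a uniform $L^\infty$ bound via the superlinearity at infinity, so $\mathcal{C}$ cannot escape to $\infty$ before reaching $\lambda_1/f_0$; symmetric reasoning covers the other cases.

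The main obstacle is showing the projection $\mathrm{Proj}_\lambda\mathcal{C}$ actually \emph{covers} the entire claimed open interval, rather than merely intersecting it. This requires combining the unboundedness assertion of Rabinowitz's alternative with the bilateral a-priori bounds above: the continuum cannot terminate at a finite interior $\lambda$ (by compactness of $\mathcal{K}$), cannot re-enter the trivial branch away from $\lambda_1/f_0$ (spectral uniqueness of the principal eigenvalue, using that $k$-admissible solutions are strictly negative and $\varphi_1 > 0$), and cannot go to norm zero or infinity prematurely (by the matching a-priori estimate at the appropriate endpoint). The delicate point is to establish these exclusions without any monotonicity hypothesis on $f$, which is where the improvement over \cite{luo_global_2020} lies; I anticipate invoking the maximum principle for $k$-Hessian operators (Trudinger) in the sharp form needed to compare $u$ with sub/supersolutions built from $\varphi_1$ at the two ends of the interval.
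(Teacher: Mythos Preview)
Your overall strategy---global bifurcation from $(\lambda_1/f_0,0)$ (or from infinity at $\lambda_1/f_\infty$), a-priori bounds confining the continuum, and approximation for the degenerate endpoints---is correct and is exactly the paper's route. There are, however, two places where your implementation does not match what actually works.

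First, the a-priori step you sketch via ``testing with $\varphi_1$ (via the divergence-form/integration-by-parts identity for $S_k$)'' does not go through for the fully nonlinear operator. Unlike the Laplacian, there is no bilinear identity $\int \varphi_1\, S_k(D^2 u)=\int u\, S_k(D^2 \varphi_1)$ to exploit, so a Picone-type testing argument is unavailable. The paper instead bounds $\lambda_n$ and locates blow-up by a \emph{touching-point comparison} with a scaled eigenfunction: take the principal eigenfunction $v$, let $\delta^*$ be maximal with $u_n\le \delta^* v$, set $\omega=\delta^* v$, and use concavity of $F_k=S_k^{1/k}$ (Caffarelli--Nirenberg--Spruck) to get $L_\omega(u_n-\delta^* v)\ge F_k(D^2 u_n)-F_k(D^2\omega)\ge 0$; the strong maximum principle then forces $u_n\equiv \delta^* v$, a contradiction. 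Your closing remark about sub/supersolutions built from $\varphi_1$ is in the right spirit, but this linearize-and-touch mechanism is the actual engine and should replace the testing idea. The companion upper $L^\infty$ bound (\cref{lem:Apriori}) is obtained not by testing either, but by scaling the domain $\Omega^t=t\Omega$ and using uniqueness of the principal eigenvalue on $\Omega^t$.

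Second, for the degenerate cases $f_0\in\{0,+\infty\}$ or $f_\infty\in\{0,+\infty\}$, your ``truncate and pass to the limit'' skips a real topological obstacle: pointwise limits of continua need not be connected. The paper modifies $f$ only near $0$ (making $f_{n,0}=1/n$ or $n$), produces continua $\mathcal{C}_n$ via the nondegenerate cases already proved, and then invokes a Whyburn-type $\limsup$-of-continua lemma to conclude that $\mathcal{C}:=\limsup_n \mathcal{C}_n$ is itself unbounded and connected. This requires precompactness of $(\bigcup_n\mathcal{C}_n)\cap B_R$ (from compactness of the solution operator) plus that lemma; ``uniform a-priori estimates on $\lambda$-compacts'' alone do not deliver connectedness of the limit set.
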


The remaining two blanks in the tabular form are listed separately here due to different restrictions on $f$, the results are much more abundant.

\begin{theorem}
\label{thm:SK2} 
    If $f_{0}=+\infty$ and $f_{\infty}=+\infty$, and if $f$ is locally Lipschitz continuous, then there exists a positive constant $\lambda^{*}$ such that \cref{eq:SK} has at least two $k$-admissible solutions for all $\lambda\in (0,\lambda^{*})$, exactly one $k$-admissible solution for $\lambda=\lambda^{*}$ and no nontrivial solutions for all $\lambda>\lambda^{*}$.   
\end{theorem}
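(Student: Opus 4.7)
The plan is to realize the claim as an S-shaped bifurcation diagram, combining the Rabinowitz-type global bifurcation machinery used in \cref{thm:SK1} with two-sided a-priori bounds that exploit the hypothesis $f_0=f_\infty=+\infty$. Set
\[
\lambda^*:=\sup\bigl\{\lambda>0:\cref{eq:SK}\text{ admits a }k\text{-admissible solution}\bigr\}.
\]

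\emph{Finiteness of $\lambda^*$.} Because $f>0$ is continuous on $(0,+\infty)$ with $f(s)/s\to+\infty$ as $s\to 0^+$ and as $s\to+\infty$, the ratio $s\mapsto f(s)/s$ attains a positive minimum $c_0$, giving $f(s)\ge c_0 s$ for every $s\ge 0$. Any $k$-admissible solution $u$ then satisfies $(S_k(D^2u))^{1/k}\ge\lambda c_0(-u)$; comparison with the first $k$-Hessian eigenfunction in $\Omega$ forces $\lambda c_0\le\lambda_1$, whence $\lambda^*\le\lambda_1/c_0<\infty$.

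\emph{Existence and multiplicity on $(0,\lambda^*)$.} Because $f_0=+\infty$, the global bifurcation apparatus used for \cref{thm:SK1} produces an unbounded connected component $\mathcal{C}$ of nontrivial $k$-admissible solutions bifurcating from $(\lambda,u)=(0,0)$, the unique candidate bifurcation value since $\lambda_1/f_0=0$. I next establish that $\mathcal{C}$ is uniformly $L^\infty$-bounded over every compact $[a,b]\subset(0,+\infty)$: for a putative sequence $u_n$ with $\lambda_n\in[a,b]$ and $M_n:=\|u_n\|_\infty\to+\infty$, set $v_n:=u_n/M_n$, and use the degree-one homogeneity of $S_k^{1/k}$ to write $(S_k(D^2v_n))^{1/k}=\lambda_n\,(f(-u_n)/(-u_n))\,(-v_n)$; on the region where $-v_n$ is bounded away from $0$, the coefficient $f(-u_n)/(-u_n)$ blows up by $f_\infty=+\infty$, contradicting the $\lambda_1$-bound forced on the rescaled limit profile. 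Together with $\lambda^*<\infty$, this forces $\mathcal{C}$ to be bounded in $\lambda$, so its unboundedness must manifest via $\|u\|_\infty\to+\infty$; this can happen only as $\lambda\to 0^+$, matching the bifurcation-from-infinity at $\lambda=0$ guaranteed by $f_\infty=+\infty$. Hence $\mathcal{C}$ is a loop-type continuum meeting every vertical line $\{\lambda\}\times C(\bar\Omega)$, $\lambda\in(0,\lambda^*)$, in at least two points, delivering the two desired $k$-admissible solutions.

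\emph{The extremal parameter $\lambda^*$ and nonexistence beyond.} Existence at $\lambda=\lambda^*$ follows by choosing $\lambda_n\nearrow\lambda^*$ and extracting a uniform limit via the a-priori bound and compactness of the $k$-Hessian solution operator. Uniqueness at $\lambda^*$ is the delicate step: assuming two distinct admissible solutions coexist there, the local Lipschitz continuity of $f$ makes the solution operator smooth enough to run a Leray-Schauder degree continuation (or an implicit function theorem at a nondegenerate linearization), perturbing one of them to produce a solution at some $\lambda>\lambda^*$ and contradicting the definition of $\lambda^*$. Non-existence for $\lambda>\lambda^*$ is immediate. The two main obstacles I anticipate are (i) carrying out the blow-up a-priori estimate cleanly—identifying a nontrivial rescaled limit profile on a $(k$-$1)$-convex domain and harvesting $f_\infty=+\infty$ to violate the $k$-Hessian eigenvalue bound, with care near $\partial\Omega$—and (ii) the degree-theoretic continuation at the turning point $\lambda^*$, where the local Lipschitz hypothesis on $f$ is precisely what is needed to preserve $k$-admissible solutions under small parameter perturbations.
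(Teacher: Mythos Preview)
Your overall architecture---bifurcation from $(0,0)$ when $f_0=+\infty$, $\lambda$-boundedness of the continuum, blow-up only at $\lambda=0$---matches the paper's. But there is a genuine gap, and it stems from your choice of $\lambda^*$. You set $\lambda^*$ to be the supremum over \emph{all} $k$-admissible solutions, which makes nonexistence beyond $\lambda^*$ tautological; the paper instead sets $\lambda^*:=\sup\{\lambda:(\lambda,u)\in\mathcal{C}\}$ and then \emph{proves} that no solution (on or off $\mathcal{C}$) exists for $\lambda>\lambda^*$. Your choice forces you to show $\sup_{\mathcal{C}}\lambda=\lambda^*$, which you never do; without it, your multiplicity claim only covers $(0,\sup_{\mathcal{C}}\lambda)$, not $(0,\lambda^*)$. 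Relatedly, the assertion that the continuum ``meets every vertical line $\{\lambda\}\times C(\bar\Omega)$ in at least two points'' is a picture, not a proof: a connected unbounded set in $\mathbb{R}\times X$ with the endpoints you describe need not be a simple arc, and there is no purely topological reason every fiber has cardinality $\ge 2$.

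The idea you are missing---and it is precisely where the paper spends the local Lipschitz hypothesis---is a connectedness trick. Given a hypothetical solution $\bar u$ at some $\bar\lambda>\lambda^*$, the paper sets $\mathcal{C}':=\{(\lambda,u)\in\mathcal{C}:u\ge\bar u\}$ and shows $\mathcal{C}'=\mathcal{C}$ by proving $\mathcal{C}'$ is nonempty, closed, and \emph{open} in $\mathcal{C}$. Openness is the heart of the matter: once one upgrades $u_0\ge\bar u$ to the strict $u_0>\bar u$ (via a pointwise Hessian comparison), the ordering survives small perturbations on $\Omega_\varepsilon=\{u_0\ge\bar u+\varepsilon\}$, and on the thin leftover strip $\Omega\setminus\Omega_\varepsilon$ the linearized operator $L_u(u-\bar u)+h(x)(u-\bar u)\le 0$ with $h$ bounded (this is exactly where local Lipschitz of $f$ enters) is handled by the maximum principle on narrow domains. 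Since $\mathcal{C}'=\mathcal{C}$ is unbounded in $X$ while $\bar u$ is a priori bounded, contradiction. The same device, applied to $\mathcal{C}'':=\{(\lambda,u)\in\mathcal{C}:\lambda\le\hat\lambda\}$ for a putative uniqueness value $\hat\lambda<\lambda^*$, yields the multiplicity on all of $(0,\lambda^*)$ rigorously. Your proposed use of local Lipschitz---an IFT/degree continuation at $\lambda^*$ to rule out two solutions there---is a different (and shakier) mechanism: both solutions at a turning point can be degenerate, so continuation need not produce a solution for $\lambda>\lambda^*$.
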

 
\begin{theorem}
\label{thm:SK3} 
  If $f_{0}=0$ and $f_{\infty}=0$, furthermore if $f$ is locally Lipschitz continuous and coercive. Then there exists a positive constant $\lambda_{*}$ such that \cref{eq:SK} has at least two $k$-admissible solutions for all $\lambda>\lambda_{*}$, exactly one $k$-admissible solution for $\lambda=\lambda_{*}$ and no nontrivial solutions for all $\lambda\in (0,\lambda_{*})$.  
\end{theorem}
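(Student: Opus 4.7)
The plan is to combine four ingredients: an a-priori inequality derived from the hypothesis $f_0=f_\infty=0$, a sub--super-solution construction that exploits coercivity, a compactness argument at the threshold, and a global-bifurcation-from-infinity argument to produce a second solution.

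Because $f>0$ on $(0,\infty)$ with $f_0=f_\infty=0$, the continuous function $s\mapsto f(s)/s$ attains its supremum on $(0,\infty)$ and $M:=\sup_{s>0}f(s)/s\in(0,\infty)$. If $u$ is a nontrivial $k$-admissible solution of \cref{eq:SK} then $(S_k(D^2 u))^{1/k}=\lambda f(-u)\le \lambda M(-u)$. Testing this against the first eigenfunction $\phi_1$ of the $k$-Hessian (using the concavity of $(S_k)^{1/k}$ on the $k$-admissible cone to integrate by parts) forces $\lambda M\ge \lambda_1$. Thus no nontrivial admissible solution exists for $\lambda<\lambda_1/M$, and I define
\[
\lambda_*:=\inf\{\lambda>0:\text{\cref{eq:SK} admits a nontrivial $k$-admissible solution}\}\ge \lambda_1/M>0.
\]

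To show $\lambda_*<\infty$ I use coercivity. Let $w$ solve the auxiliary problem $(S_k(D^2w))^{1/k}=c$ in $\Omega$, $w=0$ on $\partial\Omega$, with $c$ a large constant (such $w$ exists and is $k$-admissible with $w\le 0$). Because $f$ is coercive, for any $\lambda$ large enough we have $c\le \lambda f(-w)$ pointwise, so $w$ is a subsolution; the trivial function is a supersolution since $f(0)=0$. A monotone iteration scheme for the $k$-Hessian (justified by the local Lipschitz continuity of $f$) produces a $k$-admissible solution $u_\lambda$ with $w\le u_\lambda\le 0$. To obtain a solution at $\lambda=\lambda_*$ itself, take a sequence $\lambda_n\downarrow\lambda_*$ with solutions $u_n$; Step~1 combined with coercivity gives a uniform $L^\infty$ bound (if $\|u_n\|_\infty$ blew up, coercivity together with $f_\infty=0$ would violate $\lambda_n M\ge\lambda_1$), and then standard $C^{1,\alpha}$ regularity for the $k$-Hessian extracts a limit $u_*$ solving \cref{eq:SK} at $\lambda_*$. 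Nontriviality of $u_*$ is obtained by a blow-up rescaling $v_n:=u_n/\|u_n\|_\infty$ which, were $u_*\equiv0$, would converge to a solution of a degenerate linearized problem incompatible with $f_0=0$.

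For the multiplicity claim for $\lambda>\lambda_*$, I apply global bifurcation from infinity for the $k$-Hessian in the admissible cone. Because $f_\infty=0$, a Rabinowitz-type unbounded continuum $\mathcal{C}$ of $k$-admissible solutions emanates from $(\lambda,u)=(+\infty,\infty)$. Step 1 confines $\mathcal{C}$ to $\{\lambda\ge\lambda_*\}$, so its unboundedness forces it to bend back from infinity: for every $\lambda>\lambda_*$ the continuum delivers a solution with large $L^\infty$-norm on its upper arm, while the sub--super-solution argument of Step 2 supplies a second solution of smaller norm. A-priori $C^{1,\alpha}$ estimates within the admissible cone prevent the two branches from coalescing except at the turning point $\lambda=\lambda_*$, where uniqueness is obtained by contradiction: two distinct solutions at $\lambda_*$ would, via the implicit function theorem (using local Lipschitz continuity of $f$) or a degree-counting argument, extend to solutions for some $\lambda<\lambda_*$, contradicting the definition of $\lambda_*$.

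The main obstacle is the bifurcation-from-infinity step: establishing a Rabinowitz-type global continuum for the fully nonlinear $k$-Hessian under the sole asymptotic assumption $f_\infty=0$ (without monotonicity of $f$, unlike \cite{luo_global_2020}) demands careful treatment of the admissible cone, compactness of the associated solution map, and homotopy invariance of Leray--Schauder degree inside the cone, as well as verification that the continuum bends back rather than escaping through some alternative blow-up mechanism. A secondary technical point is the blow-up argument ruling out $u_*\equiv 0$ at $\lambda_*$, which must be tailored to $(S_k)^{1/k}$ rather than the semilinear setting.
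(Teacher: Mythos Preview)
Your approach differs substantially from the paper's, and while several ingredients are sound, there are genuine gaps.

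Your nonexistence argument via $M:=\sup_{s>0}f(s)/s$ and comparison with $\lambda_1$ is a clean direct observation that the paper does not make; however, the justification ``testing against $\phi_1$ and integrating by parts'' is not how this works for the nonlinear $k$-Hessian --- one needs a comparison argument with the eigenfunction instead. More seriously, your existence mechanism breaks down. The sub--super-solution construction fails for two reasons. First, the pointwise inequality $c\le\lambda f(-w)$ cannot hold near $\partial\Omega$: since $w=0$ on $\partial\Omega$ and $f(0)=0$, the right-hand side tends to $0$ while $c>0$ is fixed. Second, even if the inequality held, monotone iteration for the $k$-Hessian requires the nonlinearity to be monotone in order for the iterates to be ordered, and you have explicitly not assumed monotonicity of $f$ --- indeed, dispensing with monotonicity is the whole point of this theorem relative to \cite{luo_global_2020}. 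Coercivity does not rescue this. With existence not established, your multiplicity scheme (one solution from sub--super, another from a continuum) has no first leg to stand on; and the second leg, bifurcation from infinity, you correctly flag as the main obstacle but do not resolve.

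The paper circumvents all of this by never invoking sub--super solutions for \cref{eq:SK} directly. It perturbs $f$ to $f_n$ with $f_{n,0}=1/n$ and $f_{n,\infty}=0$, so that known bifurcation results from \cite{dai_global_2018} yield continua $\mathcal{C}_n$ emanating from $(n\lambda_1,0)$ and linking to $(+\infty,\infty)$. A limsup argument (via \cite{dai_eigenvalue_2015}) then produces an unbounded connected component $\mathcal{C}$ of solutions to \cref{eq:SK} containing both $(+\infty,0)$ and $(+\infty,\infty)$. The threshold is defined as $\lambda_*:=\inf\{\lambda:(\lambda,u)\in\mathcal{C}\}$, not as the infimum over all solutions. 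Existence at $\lambda_*$ and multiplicity for $\lambda>\lambda_*$ then follow from the topology of $\mathcal{C}$ alone; nonexistence for $\lambda<\lambda_*$ is proved by assuming a solution $\underline{u}$ at some $\underline{\lambda}<\lambda_*$, showing that $\{(\lambda,u)\in\mathcal{C}:u\le\underline{u}\}$ is nonempty (this is where coercivity enters), closed, and relatively open in $\mathcal{C}$ (this is where local Lipschitz continuity enters, via the maximum principle on narrow domains), hence equals $\mathcal{C}$ --- forcing $\underline{u}\equiv 0$ since $(+\infty,0)\in\mathcal{C}$. In short, the approximation-and-limit construction of $\mathcal{C}$ is precisely the missing ``bifurcation from infinity'' mechanism you were looking for, and it simultaneously replaces the failed sub--super step.
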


Especially, we can get a general results.
\begin{coro}
    If $f_{\infty}\notin (0,+\infty)$, $f$ is locally Lipschitz continuous and coercive. Let $\lambda_{*}=\underset{(\lambda,u)\in  \mathcal{C}}{inf}\left\{\lambda\right\}$, then for any $\lambda<\lambda_{*}$, \cref{eq:SK} have no nontrivial admissible solution. 
\end{coro}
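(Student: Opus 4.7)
My plan is to argue by contradiction, exploiting the global bifurcation continuum $\mathcal{C}$ constructed in \cref{thm:SK1,thm:SK3}. Suppose, for contradiction, that there is a nontrivial $k$-admissible solution $(\lambda_{0},u_{0})$ of \cref{eq:SK} with $0<\lambda_{0}<\lambda_{*}$. Since $f$ is locally Lipschitz continuous, the linearization of the $k$-Hessian operator at the admissible $u_{0}$ is uniformly elliptic, so the implicit function theorem (supplemented by the Crandall--Rabinowitz bifurcation theorem at degenerate points, and by Leray--Schauder degree continuation in general) produces a maximal connected continuum $\mathcal{C}_{0}$ of nontrivial $k$-admissible solutions passing through $(\lambda_{0},u_{0})$ in the appropriate admissible solution space.

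The coercivity of $f$, together with the maximum principle comparison against the first $k$-Hessian eigenfunction used throughout the proofs of the main theorems, yields an a priori $L^{\infty}$ bound on any admissible solution whenever $\lambda$ stays in a compact subinterval of $(0,+\infty)$. At this point the hypothesis $f_{\infty}\notin(0,+\infty)$ is essential: when $f_{\infty}=0$ there is no bifurcation from infinity at any finite $\lambda$, so $\mathcal{C}_{0}$ cannot blow up in $\|u\|_{\infty}$ with $\lambda$ bounded; when $f_{\infty}=+\infty$, \cref{thm:SK1,thm:SK2} already force $\lambda_{*}=0$, in which case the corollary is vacuous. Consequently the $\lambda$-projection of $\mathcal{C}_{0}$ must be unbounded above.

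Both $\mathcal{C}$ and $\mathcal{C}_{0}$ therefore contain nontrivial $k$-admissible solutions for arbitrarily large $\lambda$; combining this with the relative compactness of the admissible solution set over any bounded $\lambda$-interval, the two continua cannot remain separated at a positive distance from each other. Hence $\mathcal{C}_{0}\cap\mathcal{C}\neq\emptyset$, and by connectedness one deduces $(\lambda_{0},u_{0})\in\mathcal{C}$, contradicting $\lambda_{0}<\lambda_{*}=\inf_{(\lambda,u)\in\mathcal{C}}\lambda$. The main difficulty lies in this final coincidence step: ruling out the possibility of two disjoint global continua. Carrying it out requires marrying the uniqueness built into the Rabinowitz-type construction of $\mathcal{C}$ with the compactness coming from the coercivity of $f$ and the interior $C^{1,\alpha}$ estimates for $k$-admissible solutions, so that any alternative continuum is forced to share the same topological degree as $\mathcal{C}$ and therefore cannot persist in isolation.
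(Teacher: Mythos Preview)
Your proposal has a genuine gap at precisely the point you yourself flag as the ``main difficulty'': the claim that $\mathcal{C}_{0}\cap\mathcal{C}\neq\emptyset$. Nothing you have written forces two unbounded continua of admissible solutions to intersect. Compactness of the solution set over bounded $\lambda$-intervals only tells you that each continuum is closed and that any sequence of solutions has a convergent subsequence; it does not prevent the existence of two disjoint closed connected sets, both unbounded in the $\lambda$-direction. The appeal to ``sharing the same topological degree'' is not an argument either: equal Leray--Schauder indices do not imply that two continua merge. Without this step your contradiction never closes, so the argument as written does not prove the corollary.

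The paper's route is entirely different and avoids any attempt to join continua. Assuming a nontrivial admissible solution $\underline{u}$ at some $\underline{\lambda}<\lambda_{*}$, one defines
\[
\mathcal{C}'=\{(\lambda,u)\in\mathcal{C}: u\le \underline{u}\ \text{in }\Omega\}
\]
and shows $\mathcal{C}'$ is nonempty, closed, and open in $\mathcal{C}$, hence $\mathcal{C}'=\mathcal{C}$ by connectedness. Nonemptiness uses the coercivity of $f$ together with the fact that $\mathcal{C}$ contains points $(\lambda_{n},u_{n})$ with $\lambda_{n}\to+\infty$ and $\|u_{n}\|\to+\infty$, so the comparison principle gives $u_{n}\le\underline{u}$ for large $n$. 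Openness is where the local Lipschitz hypothesis enters: for $(\lambda_{0},u_{0})\in\mathcal{C}'$ one first shows the strict inequality $u_{0}<\underline{u}$ in $\Omega$ via a touching-point argument (using $\underline{\lambda}<\lambda_{0}$), and then for nearby $(\lambda,u)\in\mathcal{C}$ one has $u\le\underline{u}$ on $\Omega_{\varepsilon}=\{u_{0}\le\underline{u}-\varepsilon\}$ automatically, while on the thin residual set $\Omega\setminus\Omega_{\varepsilon}$ the inequality follows from the maximum principle in narrow domains applied to the linearized operator $L_{\underline{u}}$ with a bounded zero-order coefficient (bounded thanks to the Lipschitz condition). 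Once $\mathcal{C}'=\mathcal{C}$, the fact that $(+\infty,0)\in\mathcal{C}$ forces $0\le\underline{u}\le 0$, i.e.\ $\underline{u}\equiv 0$, contradicting nontriviality. This comparison-plus-clopen argument is exactly Step~1 in the proof of \cref{thm:SK3}, and the corollary is obtained by noting that the argument uses only coercivity, local Lipschitz continuity, and the structure of $\mathcal{C}$, not the specific value of $f_{\infty}$.
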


It is worth mentioning that for $k=N$, that is the Monge-Amp\`ere case, we have better results than \cite{luo_global_2020} as the case of $f_{0}=+\infty$ with $f_{\infty}=+\infty$ without monotonicity hypothesis by using maximum principle to construct contradiction, we only need local Lipschitz continuity of $f$ instead as in \cref{thm:SK2}. Furthermore, we can also prove the non-existence, uniqueness and multiplicity by using convexity as the case of $f_{0}=0$ with $f_{\infty}=0$ without any additional conditions on continuous function $f$ as followed.

\begin{theorem}\label{thm:MA2}
If $f_{0}=0$ and $f_{\infty}=0$, then there exists a positive constant $\lambda_{*}$ such that \cref{eq:MA} has at least two convex solutions for all $\lambda>\lambda_{*}$, exactly one convex solution for $\lambda=\lambda_{*}$ and no nontrivial solutions for all $\lambda\in (0,\lambda_{*})$.   
\end{theorem}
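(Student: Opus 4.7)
The plan combines three ingredients: a lower bound on $\lambda_{*}$ by comparison with the first Monge-Amp\`ere eigenvalue, an Alexandrov-type a-priori $L^{\infty}$ bound, and a global continuation argument exploiting the concavity of $(\det D^{2}\cdot)^{1/N}$ on the cone of convex functions.

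Since $f$ is continuous with $f_{0}=f_{\infty}=0$, the constant $M:=\sup_{s>0}f(s)/s$ is finite and attained on a compact subset of $(0,\infty)$. If $u$ is any nontrivial convex solution of \eqref{eq:MA}, then $(\det D^{2}u)^{1/N}\le \lambda M(-u)$; comparing with the first MA eigenpair $(\det D^{2}\phi_{1})^{1/N}=\lambda_{1}(-\phi_{1})$ through the standard Monge-Amp\`ere comparison principle forces $\lambda M\ge \lambda_{1}$. Setting $\lambda_{*}:=\inf\{\lambda>0:\eqref{eq:MA}\text{ has a nontrivial convex solution}\}$, one thus obtains $\lambda_{*}\ge \lambda_{1}/M>0$, which already establishes the non-existence statement for $\lambda\in(0,\lambda_{*})$. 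Next, Alexandrov's $L^{\infty}$ estimate gives, for every convex solution $u$ at parameter $\lambda$,
\[
\|u\|_{\infty}\le C(\Omega)\,\lambda\,\sup_{0\le s\le \|u\|_{\infty}}f(s),
\]
and $f_{\infty}=0$ makes the right-hand side $o(\|u\|_{\infty})$, producing uniform $L^{\infty}$ bounds over bounded $\lambda$-intervals. Combining these bounds with the $C^{0,\alpha}$-equicontinuity of bounded convex functions and Caffarelli-type interior regularity for Monge-Amp\`ere, I take $\lambda_{n}\downarrow \lambda_{*}$ with solutions $u_{n}$ and extract a $C^{0}$-convergent subsequence whose limit is a nontrivial convex solution $u_{*}$ at $\lambda_{*}$.

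For the multiplicity at $\lambda>\lambda_{*}$, I would use the global bifurcation continuum $\mathcal C$ of admissible convex solutions passing through $(\lambda_{*},u_{*})$. Because $f_{0}=0$ prevents $\mathcal C$ from returning to the trivial branch $u\equiv 0$, while the a-priori bound rules out $\|u\|_{\infty}\to \infty$ at any finite $\lambda$, the continuum $\mathcal C$ must fold back at $\lambda_{*}$, producing two distinct convex solutions for every $\lambda>\lambda_{*}$. Uniqueness at $\lambda=\lambda_{*}$ follows from the same fold picture: if two distinct solutions existed at $\lambda_{*}$, the continuum through each would extend the branch to some $\lambda<\lambda_{*}$, contradicting the non-existence just established.

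The main obstacle is securing this fold structure at $\lambda_{*}$ without any monotonicity or local Lipschitz hypothesis on $f$, in contrast to \cref{thm:SK3} where coercivity and local Lipschitz continuity drive both the a-priori estimate and the branch continuation. Here we must trade those hypotheses for the concavity of $M\mapsto(\det M)^{1/N}$ on positive semidefinite matrices and for the convex-function Alexandrov estimate, both genuinely special to the $k=N$ Monge-Amp\`ere setting; making this substitution rigorously force \emph{two} branches at every $\lambda>\lambda_{*}$ (rather than merely one) is the delicate point of the argument.
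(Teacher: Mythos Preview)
Your lower bound $\lambda_{*}\ge\lambda_{1}/M$ and the Alexandrov-type $L^{\infty}$ bound are sound first steps, and with your definition of $\lambda_{*}$ as the infimum over \emph{all} nontrivial solutions the non-existence for $\lambda<\lambda_{*}$ is tautological. But from here the argument has real gaps.

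First, you never establish $\lambda_{*}<\infty$: you need an existence result for some $\lambda$, which the paper obtains by building a continuum $\mathcal C$ via approximation (perturbing $f$ so that $f_{0}=1/n$, invoking the case $f_{0}\in(0,\infty)$, $f_{\infty}=0$, and taking a limsup of the resulting branches). Second, and more seriously, your fold picture is unsupported. Knowing only that $\mathcal C$ cannot meet the trivial branch and cannot blow up at finite $\lambda$ does not force two solutions at every $\lambda>\lambda_{*}$; a single monotone branch running off to $(+\infty,c)$ is fully compatible with those constraints. The paper gets the two-branch structure from a stronger topological input: the limiting continuum $\mathcal C$ actually joins $(+\infty,0)$ to $(+\infty,\infty)$, so for every $\lambda$ above its minimum the $\lambda$-slice of $\mathcal C$ must be hit twice. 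Third, your uniqueness claim at $\lambda_{*}$ does not follow: two solutions at $\lambda_{*}$ need not generate solutions at smaller $\lambda$ without a further continuation argument.

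The paper's substitute for the Lipschitz/coercivity hypotheses is not, as you suggest, the concavity of $(\det)^{1/N}$, but a direct convexity-of-$u$ argument. It sets $\mathcal C'=\{(\lambda,u)\in\mathcal C:u\le\underline u\}$ for a hypothetical solution $\underline u$ at some $\underline\lambda<\lambda_{*}$ and shows $\mathcal C'$ is open in $\mathcal C$ by ruling out a sequence $u_{n}\to u'\in\mathcal C'$ with touching points $x_{n}$ accumulating on $\partial\Omega$, using only the convexity of $u_{n}$ and $\underline u$ to reach a contradiction. This replaces the narrow-domain maximum principle of \cref{thm:SK3} without any regularity on $f$. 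You will need either this mechanism or an equivalent one; the ingredients you list do not by themselves close the multiplicity and uniqueness claims.
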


Similar to the proof of \cref{thm:SK2} and \cref{thm:MA2}, we have the following corollary.

\begin{coro}
    Let $\lambda_{*}=\underset{(\lambda,u)\in \mathcal{C}}{inf}\left\{\lambda\right\}$ and $\lambda^{*}=\underset{(\lambda,u)\in \mathcal{C}}{sup}\left\{\lambda\right\}$, if $f$ is locally Lipschitz continuous, then for any $\lambda<\lambda_{*}$ or $\lambda>\lambda^{*}$, \cref{eq:MA} have no nontrivial convex solutions. 
\end{coro}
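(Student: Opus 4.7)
The plan is to argue by contradiction. Suppose a nontrivial convex solution $(\bar\lambda, \bar u)$ of \cref{eq:MA} exists with $\bar\lambda \notin [\lambda_*, \lambda^*]$; I treat the case $\bar\lambda > \lambda^*$, the case $\bar\lambda < \lambda_*$ being entirely symmetric. The strategy is to show that $(\bar\lambda, \bar u)$ must lie on the Rabinowitz continuum $\mathcal{C}$, which directly contradicts the definition of $\lambda^*$.

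The first step is local continuation at $(\bar\lambda, \bar u)$. Since $\Omega$ is uniformly convex and $\bar u$ is a nontrivial convex solution, Caffarelli's regularity theory yields $\bar u \in C^{2,\alpha}(\bar\Omega)$ strictly convex, so the Monge-Amp\`ere operator linearizes to a uniformly elliptic operator $L\varphi = \tfrac{1}{N}(\det D^2\bar u)^{1/N-1} U^{ij}\partial_{ij}\varphi$, where $(U^{ij})$ is the cofactor matrix of $D^2\bar u$. The local Lipschitz hypothesis on $f$ lets me recast \cref{eq:MA} as a compact fixed-point equation on $C(\bar\Omega)$ whose Fr\'echet derivative at $\bar u$ is $L^{-1}$ composed with multiplication by a bounded function; either the implicit function theorem (if $0$ is not an eigenvalue of the linearization) or a Crandall-Rabinowitz local bifurcation analysis then produces a continuous local branch of nontrivial convex solutions through $(\bar\lambda, \bar u)$.

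Second, combining the uniform $L^\infty$ and $C^{2,\alpha}$ estimates used in the proofs of \cref{thm:SK2,thm:MA2} with Rabinowitz' global alternative, this local branch extends to a maximal connected component $\tilde{\mathcal{C}} \subset \mathbb{R} \times C(\bar\Omega)$ of nontrivial convex solutions containing $(\bar\lambda, \bar u)$, which is either unbounded or meets the trivial branch. The decisive step is then to match $\tilde{\mathcal{C}}$ with $\mathcal{C}$: both continua emanate from, or escape to infinity at, the same Rabinowitz bifurcation points dictated by $f_0$ and $f_\infty$, and the convexity-based comparison argument used in the proof of \cref{thm:MA2} forces uniqueness of the unbounded continuum of convex solutions. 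Hence $\tilde{\mathcal{C}} \subseteq \mathcal{C}$, so $\bar\lambda \in [\lambda_*, \lambda^*]$, the desired contradiction.

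The main obstacle will be this last identification. Without monotonicity of $f$, a priori one cannot exclude a secondary continuum of convex solutions disjoint from $\mathcal{C}$; the argument must exploit the rigidity imposed by convexity together with a comparison of $\bar u$ against a rescaled first eigenfunction of the linearized $N$-Hessian, in the spirit of the maximum-principle arguments of \cref{thm:SK2,thm:MA2}. A secondary subtlety is handling those $(\bar\lambda,\bar u)$ at which the linearization $L$ is degenerate, where the implicit function theorem must be replaced by the local bifurcation theorem; local Lipschitz continuity is precisely what is needed to keep this perturbation analysis valid.
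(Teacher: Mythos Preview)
Your approach has a genuine gap, and it is precisely the one you flag yourself: the identification $\tilde{\mathcal{C}}\subseteq\mathcal{C}$ in the last step. Nothing in the paper, and nothing standard, gives uniqueness of the unbounded continuum of convex solutions; the convexity-based comparison in the proof of \cref{thm:MA2} does not assert this, it only shows that a certain subset of $\mathcal{C}$ is clopen. There is no mechanism to rule out a secondary component sitting entirely to the right of $\lambda^{*}$, so your argument does not close. A further technical issue is that your Step~1 invokes Fr\'echet differentiability and the implicit function theorem, but $f$ is only assumed locally Lipschitz, not $C^{1}$, so the linearization you write down need not exist as a Fr\'echet derivative.

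The paper's argument is simpler and avoids both problems: it never tries to place $(\bar\lambda,\bar u)$ on $\mathcal{C}$. Instead, assuming a nontrivial convex solution $\bar u$ at some $\bar\lambda>\lambda^{*}$, one sets
\[
\mathcal{C}'=\{(\lambda,u)\in\mathcal{C}:u\geq\bar u\ \text{in}\ \Omega\}
\]
and shows $\mathcal{C}'$ is nonempty, closed, and relatively open in $\mathcal{C}$, exactly as in Step~1 of the proofs of \cref{thm:SK2} and \cref{thm:MA2}. Openness is where the local Lipschitz hypothesis enters: one first upgrades $u_{0}\geq\bar u$ to the strict inequality $u_{0}>\bar u$ via the pointwise Hessian comparison, and then for nearby $(\lambda,u)$ one applies the maximum principle on the narrow strip $\Omega\setminus\Omega_{\varepsilon}$ to the linearized operator $L_{u}+h(x)$, with $h$ bounded because $f$ is locally Lipschitz. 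Connectedness of $\mathcal{C}$ then forces $\mathcal{C}'=\mathcal{C}$, which contradicts the known global structure of $\mathcal{C}$ (it contains points with $\|u\|$ arbitrarily large, whereas $u\geq\bar u$ with $u\leq 0$ gives $\|u\|\leq\|\bar u\|$). The case $\bar\lambda<\lambda_{*}$ is handled symmetrically with $\mathcal{C}'=\{u\leq\bar u\}$. The point is that $\bar u$ is used as a \emph{barrier} constraining $\mathcal{C}$, not as a seed for a new continuum.
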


Inspiring by the results of the Monge-Amp\`ere case, we have the following conjecture on $k$-Hessian case:
\begin{conj} 
If $f_{0}=0$ and $f_{\infty}=0$, there exists a $k^{*}$ such that as $k\ge k^{*}$, there exists a positive constant $\lambda_{*}$ such that \cref{eq:SK} has at least two convex solutions for all $\lambda>\lambda_{*}$, exactly one convex solution for $\lambda=\lambda_{*}$ and no nontrivial solutions for all $\lambda\in (0,\lambda_{*})$.   
\end{conj}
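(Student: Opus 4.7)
My plan is to parallel the proof strategy for \cref{thm:MA2}, the Monge--Amp\`ere case $k=N$, and identify which ingredients survive for $k$ below but sufficiently close to $N$. First, by the Leray--Schauder/global bifurcation machinery used for \cref{thm:SK3}, an unbounded connected component $\mathcal{C}$ of admissible solutions of \cref{eq:SK} in $\mathbb{R}_{+}\times C^{0}(\overline{\Omega})$ is obtained. Because $f_{0}=f_{\infty}=0$, no finite eigenvalue of the $k$-Hessian acts as a bifurcation point from the trivial branch, so the projection of $\mathcal{C}$ onto the $\lambda$-axis must be a half-line of the form $[\lambda_{*},+\infty)$ with $\lambda_{*}=\inf\{\lambda:(\lambda,u)\in\mathcal{C}\}$. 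Existence of solutions for large $\lambda$ (so that $\lambda_{*}<+\infty$) is expected from a standard topological degree argument combined with sub/super-solution constructions using the first eigenfunction of the $k$-Hessian.

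The decisive step is to prove $\lambda_{*}>0$ while dropping the coercivity hypothesis of \cref{thm:SK3}. In the Monge--Amp\`ere case, convexity of solutions together with Aleksandrov's estimate $\|u\|_{\infty}^{N}\le C\int\det(D^{2}u)\,dx$ yields, upon combination with $f_{0}=f_{\infty}=0$, an a priori lower bound on $\lambda$ for any nontrivial solution. For $k<N$, $k$-admissible solutions need not be convex; however, as $k$ increases the G\r arding cone $\Gamma_{k}$ contracts toward the convex cone $\Gamma_{N}$, and integral identities of Trudinger--Wang type, roughly of the shape $\int(-u)\,S_{k}(D^{2}u)\,dx\le C\|u\|_{\infty}^{k+1}$, control $k$-admissible solutions in a manner increasingly reminiscent of the convex case. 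I would try to show that there is a threshold $k^{*}$ beyond which such Hessian-measure estimates are quantitatively strong enough to replace the coercivity of $f$, yielding $\|u\|_{\infty}\le C(\lambda,f)$ with $C(\lambda,f)\to 0$ as $\lambda\to 0^{+}$.

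Once $\lambda_{*}>0$ is established, the remaining conclusions follow along the lines of \cref{thm:MA2,thm:SK3}: compactness of $\mathcal{C}$ on each strip $[\lambda_{*},\Lambda]\times C^{0}(\overline{\Omega})$ provides a solution at $\lambda_{*}$; unboundedness and connectedness of $\mathcal{C}$ force at least two distinct admissible solutions for every $\lambda>\lambda_{*}$; and uniqueness at the turning point $\lambda_{*}$ comes from a fold analysis together with a comparison against the first eigenfunction. The \emph{main obstacle} is precisely the middle step: quantifying how much ``convex-like'' rigidity a $k$-admissible function inherits when $k$ is near $N$, strongly enough to obtain a priori estimates without any coercivity or Lipschitz assumption on $f$. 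Even deciding whether $k^{*}<N$ is truly achievable, or whether only $k^{*}=N$ (reducing to \cref{thm:MA2}) works, appears delicate, which is presumably why the statement is proposed as a conjecture rather than proved as a theorem.
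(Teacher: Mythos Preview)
The paper does \emph{not} prove this statement: it is explicitly labeled a conjecture, motivated by the Monge--Amp\`ere result (\cref{thm:MA2}) and by the contrast with the semilinear counterexample discussed in \cref{rmk1}. There is therefore no proof in the paper to compare your proposal against.

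Your outline is a reasonable sketch of how one might \emph{attempt} a proof, and you have correctly located the genuine obstruction. The construction of the unbounded continuum $\mathcal{C}$, the identification of $\lambda_{*}$, and the multiplicity/nonexistence arguments on either side of $\lambda_{*}$ would indeed follow the templates of \cref{thm:SK3} and \cref{thm:MA2} once the key estimate is in hand. The missing ingredient you single out---showing that for $k$ sufficiently close to $N$ the $k$-admissible solutions inherit enough ``convex-like'' rigidity to force $\lambda_{*}>0$ without assuming coercivity of $f$---is precisely what the paper does not know how to do. Your final sentence acknowledging this is exactly right: the authors themselves leave open whether any $k^{*}<N$ works, and their discussion around \cref{rmk1} indicates that for small $k$ (certainly $k=1$) the conclusion can fail, which is why the threshold $k^{*}$ appears in the statement at all.

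In short: your proposal is not wrong as a strategy, but it is not a proof either, and neither is anything in the paper. The decisive a priori estimate you describe remains conjectural.
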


Next, we turn our attentions on the system case, We will establish the preliminary framework for studying couple $k$-Hessian system via bifurcation including eigenvalue problem, the bifurcation of auxiliary equation, a priori estimate and so on. Then we consider the existence, non-existence and multiplicity of the solutions to \cref{eq:SKS}. Up to our knowledge, this is the first paper to study the bifurcation phenomena of couple $k$-Hessian system. 

We take $2$-coupled case as example for convenient, all the following results are still holds in m-coupled $(m\geq2)$ case respectively. 

Denote 
$$g_{0}=\underset{\vert s+t\vert\rightarrow0}{lim}\frac{g(s,t)}{\vert t\vert}, h_{0}=\underset{\vert s+t\vert \rightarrow0}{lim}\frac{h(s,t)}{\vert s\vert},$$
and $$g_{\infty}=\underset{\vert s+t\vert\rightarrow+\infty}{lim}\frac{g(s,t)}{\vert t\vert}, h_{\infty}=\underset{\vert s+t\vert \rightarrow+\infty}{lim}\frac{h(s,t)}{\vert s\vert}.$$
Recall that $\lambda_{1}$ be the first eigenvalue of $k$-Hessian operator in $\Omega$. 

The main results are the following.

\begin{theorem}
\label{thm:SKS1}
Given $g,h$ satisfying various cases, and assume $\mu:=g_0=h_0$, $\nu:=g_\infty=h_\infty$ for convenience, then \cref{eq:SKS} must has at least one $k$-admissible solution $(u,v)$, provided $\lambda\in$
\begin{table}[!htbp]
\renewcommand{\arraystretch}{1.5}
\centering
\begin{tabular}{|c|c|c|c|}
  \hline
  \diagbox{$\mu$}{$\nu$}  & $0$ & $(0,+\infty)$ & $+\infty$ \\
  \hline
  $0$ &  & $(\frac{\lambda_1}{\nu},+\infty)$ & $(0,+\infty)$ \\
  \hline
  $(0,+\infty)$ & $(\frac{\lambda_1}{\mu},+\infty)$ & $(\min\{\frac{\lambda_1}{\mu},\frac{\lambda_1}{\nu}\},\max\{\frac{\lambda_1}{\mu},\frac{\lambda_1}{\nu}\})$ & $(0,\frac{\lambda_1}{\mu})$ \\
  \hline
  $+\infty$ & $(0,+\infty)$ & $(0,\frac{\lambda_1}{\nu})$ & \\ 
  \hline
\end{tabular} 
\end{table}
\end{theorem}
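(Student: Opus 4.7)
The plan is to mirror in the coupled setting the argument used for \cref{thm:SK1}, exploiting the symmetry assumptions $g_{0}=h_{0}=:\mu$ and $g_{\infty}=h_{\infty}=:\nu$ to collapse the linearized coupled eigenvalue problem to a scalar $k$-Hessian eigenvalue problem. First I would recast \cref{eq:SKS} as a fixed-point equation $(u,v)=\lambda T(u,v)$, where
\begin{equation*}
T(u,v):=\bigl(S_{k}^{-1/k}(g(-u,-v)),\,S_{k}^{-1/k}(h(-u,-v))\bigr)
\end{equation*}
is the compact, order-preserving operator obtained by inverting the $k$-Hessian Dirichlet problem on the $(k$-$1)$-convex domain $\Omega$ componentwise. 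Compactness of $T$ on the admissible cone in $C_{0}(\overline{\Omega})\times C_{0}(\overline{\Omega})$ follows from the regularity theory for $k$-Hessian Dirichlet problems that is already part of the preliminary framework announced above.

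Next I would identify the candidate bifurcation values. For $\mu\in(0,+\infty)$ the linearization at $(0,0)$ reads
\begin{equation*}
(S_{k}(D^{2}u))^{1/k}=\lambda\mu(-v),\qquad (S_{k}(D^{2}v))^{1/k}=\lambda\mu(-u),
\end{equation*}
and admissibility forces $u,v\le 0$. Substituting $u=v$ reduces the system to the scalar $k$-Hessian eigenvalue problem with principal eigenvalue $\lambda_{1}/\mu$, while the antisymmetric mode $u=-v$ is ruled out by the sign constraint. Hence $\lambda_{1}/\mu$ is the unique principal bifurcation value from the trivial branch; the analogous calculation at infinity yields $\lambda_{1}/\nu$ as the asymptotic bifurcation value. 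When $\mu$ or $\nu$ equals $0$ or $+\infty$, I would interpolate $g,h$ by a finite-limit perturbation, apply Rabinowitz bifurcation to the perturbed problem, and pass to the limit to obtain bifurcation from $0$ or from $\infty$ at the corresponding point.

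An application of the Rabinowitz global bifurcation theorem (in the positive-cone version adapted to admissible solutions) then produces an unbounded connected continuum $\mathcal{C}$ of nontrivial admissible triples $(\lambda,u,v)$ emanating from $(\lambda_{1}/\mu,0,0)$. Each of the nine cells of the table is treated in parallel with its scalar counterpart in \cref{thm:SK1}: the behavior at zero fixes the starting endpoint of the projection of $\mathcal{C}$ onto the $\lambda$-axis, the behavior at infinity (via the linearization above together with a-priori $L^{\infty}$ bounds on compact $\lambda$-intervals) pins the asymptotic direction, and the global unboundedness of $\mathcal{C}$ forces its $\lambda$-projection to cover the stated open interval.

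The step I expect to be the main obstacle is the a-priori estimate for the coupled system. Since $S_{k}^{1/k}$ is not additive, one cannot reduce control of $(u,v)$ to control of $u+v$ directly; instead I would exploit the concavity of $S_{k}^{1/k}$ on the admissible cone to compare with an auxiliary scalar $k$-Hessian equation for $w=(u+v)/2$, combined with an Alexandrov--Bakelman--Pucci type maximum principle for $k$-Hessian operators on $(k$-$1)$-convex domains, to bound $\|u\|_{\infty}+\|v\|_{\infty}$ in terms of $\lambda$ and the growth of $g,h$. A second subtle point is that in the mixed cells (for instance $\mu=0$, $\nu=+\infty$) both linearizations degenerate and the continuum must be traced by a careful truncation-and-limit argument, which is precisely where the symmetry $g_{0}=h_{0}$ and $g_{\infty}=h_{\infty}$ plays its decisive role.
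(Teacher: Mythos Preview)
Your overall architecture---reformulate as a fixed point, identify bifurcation values from the linearized coupled eigenvalue problem, invoke Rabinowitz, and trace the continuum case by case with a-priori bounds---is the paper's architecture. The two load-bearing technical steps, however, are carried out differently, and in one of them your sketch has a gap.

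For the coupled eigenvalue problem the paper does \emph{not} reduce to the scalar problem via the ansatz $u=v$. It builds the composite operator $A=A_{1}A_{2}$ (each $A_{i}$ solving a single $k$-Hessian Dirichlet problem) and applies a Krein--Rutman-type lemma for completely continuous, homogeneous, monotone, strong operators on a cone to obtain both existence and \emph{uniqueness} of the principal eigenvalue; this uniqueness is precisely what later forces the Rabinowitz alternative to yield an unbounded continuum (since it excludes bifurcation at any other $\lambda$). Your symmetry argument exhibits $\lambda_{1}/\mu$ as \emph{an} eigenvalue, but ruling out only the antisymmetric mode $u=-v$ does not prove it is the only one: nothing prevents eigenpairs that are neither symmetric nor antisymmetric, so the uniqueness statement is not established by the two-mode check you give.

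For the a-priori estimate the paper again stays in the coupled setting: it argues by contradiction, rescales a blowing-up sequence, and uses sub- and super-solutions of the coupled eigenvalue system on a shrunk domain $\Omega^{\tau}$ to manufacture a second eigenvalue, contradicting the uniqueness just proved. Your proposed route via $w=(u+v)/2$ and concavity of $S_{k}^{1/k}$ yields $S_{k}^{1/k}(D^{2}w)\ge\tfrac{\lambda}{2}(g+h)$, but the prescribed limits are for $g(s,t)/|t|$ and $h(s,t)/|s|$, not for $(g+h)/|s+t|$, so the right-hand side is not controlled by $-w$ alone without an extra argument relating $|u|,|v|$ individually to $|u+v|$; this is not automatic for admissible pairs. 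The truncation-and-limit device you mention for the degenerate cells ($\mu$ or $\nu$ equal to $0$ or $+\infty$) is exactly what the paper does.
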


\newpage

Due to different restrictions on $g,h$, the results of the remaining two blanks in the tabular form are listed separately here. Different to \cref{eq:SK}, we need monotonicity assumption about right-hand terms in order to use maximum principle in our proof. 

\begin{theorem}
\label{thm:SKS2}
    If $g_{0}=h_{0}=+\infty$, $g_{\infty}=h_{\infty}=+\infty$, $g$ is non-decreasing with $t$ and $h$ is non-decreasing with $s$. Furthermore, assume $g$ and $h$ are locally Lipschitz continuous. Then, there exists a positive constant $\lambda^{*}$ such that \cref{eq:SKS} has at least two admissible solutions for all $\lambda\in (0,\lambda^{*})$ in $M$, one admissible solution for $\lambda=\lambda^{*}$ in $M$ and no  nontrivial solutions for all $\lambda>\lambda^{*}$.    
\end{theorem}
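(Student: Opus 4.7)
The plan is to follow the strategy of \cref{thm:SK2}, adapted to the coupled system via the bifurcation framework for \cref{eq:SKS} set up earlier in the paper. First, since $g_0 = h_0 = +\infty$, the global bifurcation theorem for the system furnishes an unbounded continuum $\mathcal{C}$ of admissible solutions $(\lambda, u, v)$ emanating from $(0, 0, 0)$; since in addition $g_\infty = h_\infty = +\infty$, bifurcation from infinity likewise occurs at $\lambda = 0$, so $\mathcal{C}$ also contains a sequence with $\|u_n\|_\infty + \|v_n\|_\infty \to \infty$ and $\lambda_n \to 0$. Thus $\mathcal{C}$ possesses two ``ends'' both meeting $\lambda = 0$.

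Next I would establish the a priori upper bound $\lambda^* < \infty$. From $g_0 = g_\infty = +\infty$, the continuity of $g$, and its vanishing only on the axes, I extract a positive constant
\[
c_g := \inf_{s,t>0}\frac{g(s,t)}{t} > 0,
\]
and similarly $c_h := \inf_{s,t>0} h(s,t)/s > 0$. For any admissible pair $(u, v)$ with $u, v < 0$ in $\Omega$, I would combine the two equations using the Brunn--Minkowski-type inequality $(S_k(A+B))^{1/k} \geq (S_k(A))^{1/k} + (S_k(B))^{1/k}$ on the $(k-1)$-convex cone to obtain, with $w := u + v$,
\[
(S_k(D^2 w))^{1/k} \geq \lambda\,\min\{c_g, c_h\}\,(-w) \quad \text{in } \Omega, \qquad w|_{\partial\Omega} = 0.
\]
Testing against the first eigenfunction $\varphi_1$ of the scalar $k$-Hessian and invoking the variational characterization of $\lambda_1$ then forces $\lambda\min\{c_g,c_h\} \leq \lambda_1$, which yields both the finite $\lambda^* := \sup\{\lambda : \text{\cref{eq:SKS} has an admissible solution}\}$ and the non-existence claim for $\lambda > \lambda^*$.

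To realize a solution at $\lambda = \lambda^*$, I would choose $(\lambda_n, u_n, v_n) \in \mathcal{C}$ with $\lambda_n \nearrow \lambda^*$; the monotonicity of $g$ in $t$ and of $h$ in $s$, combined with local Lipschitz continuity, yield $C^0$ a priori bounds on bounded $\lambda$-intervals, and the Hessian-equation regularity theory gives compactness in a suitable Hölder space and a convergent subsequence whose limit is admissible at $\lambda^*$. The multiplicity for $\lambda \in (0, \lambda^*)$ then follows from connectedness of $\mathcal{C}$: the two arcs issuing from $(0,0,0)$ and from $(0, \infty)$ respectively must each cross every $\lambda \in (0, \lambda^*)$, producing at least two distinct admissible solutions.

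The main obstacle is the a priori upper bound: unlike the scalar case, the two equations are coupled, so a direct eigenvalue comparison within a single equation is unavailable. The monotonicity hypotheses on $g$ and $h$ are used precisely so that the Brunn--Minkowski addition trick reduces the system to a single scalar $k$-Hessian inequality for $w = u+v$, after which the comparison with $\lambda_1$ closes the argument. A secondary technical point is verifying that $w = u + v$ remains in the $(k-1)$-convex class so the eigenvalue comparison is legitimate, and confirming that the infima defining $c_g, c_h$ are genuinely positive away from the axes under the two-sided monotonicity assumptions.
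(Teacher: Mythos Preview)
Your outline has the right overall architecture, but it contains a genuine gap that the paper's proof is specifically designed to close, and it misidentifies where the structural hypotheses on $g,h$ enter.

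You define $\lambda^{*}$ as the supremum of $\lambda$ over \emph{all} admissible solutions; non-existence for $\lambda>\lambda^{*}$ is then tautological, and your Brunn--Minkowski step (granting $c_g,c_h>0$) at best shows $\lambda^{*}<\infty$. The multiplicity claim, however, is extracted from the continuum $\mathcal{C}$ emanating from $(0,(0,0))$, and it yields two solutions only for $\lambda<\sup_{\mathcal C}\lambda$. Nothing in your argument forces $\sup_{\mathcal C}\lambda=\lambda^{*}$: there could be solutions off $\mathcal{C}$ at parameters strictly larger than $\sup_{\mathcal C}\lambda$, and for such $\lambda$ you obtain neither multiplicity nor non-existence. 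This is precisely the gap the paper closes with the topological argument $\mathcal{C}'=\mathcal{C}$: assuming a solution $(\overline u,\overline v)$ exists at some $\overline\lambda>\sup_{\mathcal C}\lambda$, one sets $\mathcal{C}'=\{(\lambda,(u,v))\in\mathcal{C}:u\ge\overline u,\ v\ge\overline v\}$ and shows it is nonempty, closed, and \emph{open} in $\mathcal{C}$, hence equals $\mathcal{C}$, contradicting the unboundedness of $\mathcal{C}$ in the $E$-direction. The openness step is exactly where the monotonicity of $g$ in $t$ and of $h$ in $s$, together with the local Lipschitz continuity, are used: a touching-point comparison for $S_k$ plus a narrow-domain maximum principle for the linearized coupled operator. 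Your assertion that ``the monotonicity hypotheses are used precisely so that the Brunn--Minkowski addition trick reduces the system'' misplaces their role; that inequality requires only $k$-convexity of $u,v$, not monotonicity of $g,h$.

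Two secondary points. First, you assert that the continuum from $(0,(0,0))$ and the one bifurcating from infinity at $\lambda=0$ are the same $\mathcal{C}$; the paper does not take this for granted but deduces it by showing $\mathcal{C}$ is bounded in the $\lambda$-direction, hence unbounded in $E$, and then that $\lambda=0$ is the unique blow-up parameter via the a~priori estimate of \cref{pro:priori2}. Second, the positivity $c_g=\inf_{s,t>0}g(s,t)/t>0$ is not justified: the hypotheses $g_0=g_\infty=+\infty$ only control $g(s,t)/t$ along $s+t\to 0$ or $s+t\to\infty$, so a minimizing sequence can have $t_n\to 0$ with $s_n$ trapped in a compact set away from $0$, where nothing in the assumptions prevents $g(s_n,t_n)/t_n\to 0$.
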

    
\begin{theorem}
\label{thm:SKS3}
    If $f_{0}=g_{0}=0$, $h_{\infty}:=f_{\infty}=g_{\infty}=0$, $g$ is non-decreasing with $t$ and $h$ is non-decreasing with $s$. Furthermore, assume $g$ and $h$ are coercive respect to $\vert s+t\vert$ and locally Lipschitz continuous. Then there exists a positive constant $\lambda_{*}$ such that \cref{eq:SKS} has at least two admissible solutions for all $\lambda>\lambda_{*}$ in $M$, one admissible solution for $\lambda=\lambda_{*}$ in $M$ and no nontrivial solutions for all $\lambda\in (0,\lambda_{*})$.
\end{theorem}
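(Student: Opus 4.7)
The plan is to parallel the proof of \cref{thm:SK3} within the coupled bifurcation framework established earlier in the paper for \cref{eq:SKS}. The new ingredient is that the maximum--principle step must be phrased as a cooperative-type comparison, which is exactly what the hypotheses ``$g$ non-decreasing in $t$'' and ``$h$ non-decreasing in $s$'' are tailored for.

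First I would apply global bifurcation from infinity to \cref{eq:SKS}: since $g_\infty = h_\infty = 0$, the appropriate rescaling produces an unbounded connected component $\mathcal{C}$ of nontrivial admissible triples $(\lambda, u, v)$ emanating from $\{+\infty\} \times \{(0,0)\}$. The local Lipschitz hypothesis guarantees that the fixed-point operator associated with \cref{eq:SKS} is compact on the admissible cone, so the system version of the Rabinowitz alternative gives $\mathcal{C}$ unbounded in $\mathbb{R} \times C(\bar\Omega) \times C(\bar\Omega)$. Because $g_0 = h_0 = 0$ as well, $\mathcal{C}$ cannot meet the trivial solution at any finite $\lambda$, and together with the coercivity of $g,h$ in $|s+t|$ this forces the $\lambda$-projection of $\mathcal{C}$ to stay bounded away from $0$.

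I would then set $\lambda_* := \inf\{\lambda : (\lambda, u, v) \in \mathcal{C}\}$. To establish $\lambda_* > 0$ and the non-existence claim, I test each line of \cref{eq:SKS} against the positive first eigenfunction $\varphi_1$ of the $k$-Hessian on $\Omega$: combining the divergence/Maclaurin identity the paper records for $S_k^{1/k}$ with a coercivity bound of the form $g(s,t), h(s,t) \geq c(|s|+|t|) - C$, one obtains an explicit positive lower bound on $\lambda$ valid for every $k$-admissible pair, not only those on $\mathcal{C}$; this simultaneously gives $\lambda_* > 0$ and rules out any nontrivial admissible solution for $\lambda < \lambda_*$. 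For multiplicity, observe that both ``ends'' of $\mathcal{C}$ correspond to $\lambda \to +\infty$ (the infinity end by construction; the other end because approaching finite $\lambda$ while $\|u\|_\infty + \|v\|_\infty \to 0$ is excluded by $g_0 = h_0 = 0$). Connectedness then forces the $\lambda$-image of $\mathcal{C}$ to equal $[\lambda_*, +\infty)$, with each $\lambda > \lambda_*$ attained at least twice and $\lambda = \lambda_*$ attained uniquely at the turning point.

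The main obstacle will be executing the cooperative comparison step rigorously in the $k$-admissible class, which is not a linear subspace and therefore does not permit naive subtraction of solutions. My plan is to use the monotonicities of $g$ in $t$ and $h$ in $s$ in a sweeping iteration: freeze one component, solve the scalar $k$-Hessian Dirichlet problem for the other using the results for \cref{eq:SK}, and iterate; the locally Lipschitz hypothesis yields contraction on bounded sets, producing the monotone limits needed to compare any hypothetical solution at $\lambda < \lambda_*$ with a corresponding pair on $\mathcal{C}$ and derive a contradiction, and likewise to identify $\lambda_*$ as a simple turning point rather than a crossing of distinct branches.
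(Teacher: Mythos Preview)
Your overall architecture matches the paper's: build an unbounded continuum $\mathcal{C}$ that links $(+\infty,(0,0))$ to $(+\infty,\infty)$, define $\lambda_*$ as the infimum of its $\lambda$-projection, and then argue non-existence below $\lambda_*$ and multiplicity above. The paper actually constructs $\mathcal{C}$ not by bifurcation from infinity directly but by approximating $g,h$ with $g_n,h_n$ having $g_{n,0}=h_{n,0}=1/n$, applying case~(2) of \cref{thm:SKS1} to each, and passing to a limit via Whyburn-type lemmas; your route via asymptotic bifurcation is a reasonable alternative, though you would need to verify the degree-jump hypothesis of \cref{lem:GAB} in this $g_\infty=h_\infty=0$ setting, which the paper does not do directly.

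The genuine gap is in your non-existence argument. Your proposed eigenfunction test relies on a coercivity bound of the form $g(s,t)\geq c(|s|+|t|)-C$, but this is \emph{incompatible} with the hypothesis $g_\infty=\lim_{|s+t|\to\infty} g(s,t)/|t|=0$: along $s=0$, $t\to\infty$ one would get $g(0,t)\geq ct-C$ yet $g(0,t)/t\to 0$. So that inequality is simply false here, and even if you weaken it, any integral test of this type only delivers a universal lower bound $\lambda_0>0$ on $\lambda$ for \emph{all} solutions---it cannot show that $\lambda_0=\lambda_*$, hence does not rule out solutions off $\mathcal{C}$ at parameters in $[\lambda_0,\lambda_*)$.

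The paper's mechanism is different and sharper. Given a hypothetical solution $(\underline u,\underline v)$ at some $\underline\lambda<\lambda_*$, one proves that the set $\mathcal{C}'=\{(\lambda,(u,v))\in\mathcal{C}:u\le\underline u,\ v\le\underline v\}$ is nonempty (using coercivity to place points near $(+\infty,\infty)$ below $(\underline u,\underline v)$ via comparison), closed, and relatively open in $\mathcal{C}$. Openness is where the monotonicity hypotheses enter: at an interior touching point one applies the $k$-Hessian maximum principle of \cite{bhattacharya_maximum_2021}, and near the boundary one linearises each equation into a cooperative system $L_{\underline u}w_1-c_{11}w_1-c_{12}w_2\le 0$, $L_{\underline v}w_2-c_{21}w_1-c_{22}w_2\le 0$ with bounded coefficients (local Lipschitz) and invokes the narrow-domain maximum principle for systems from \cite{de_figueredo_monotonicity_1994}. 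Connectedness then gives $\mathcal{C}'=\mathcal{C}$, contradicting $(+\infty,(0,0))\in\mathcal{C}$ since $\underline u,\underline v<0$ in $\Omega$. Your ``sweeping iteration'' sketch does not supply this; you should replace the eigenfunction test by this open--closed argument.
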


We can also have a general results as in the equation case.
\begin{coro}
    If $f_{\infty}\notin(0,+\infty)$, $g$ is non-decreasing with  $t$ and $h$ is non-decreasing with $s$. Furthermore, $g$ and $h$ are coercive respect to $\vert s+t\vert$ while they are locally Lipschitz continuous. Let $\underline{\lambda}=\underset{(\lambda,(u,v))\in \mathbb{R}^{+}\times E}{inf}\left\{\lambda:(\lambda,(u,v))\in \mathcal{C}\right\}$, then \cref{eq:SKS} has no solution in $M$ as $\lambda<\underline{\lambda}$.
\end{coro}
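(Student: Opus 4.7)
The plan is to argue by contradiction, treating $\underline{\lambda}$ as the threshold below which \eqref{eq:SKS} cannot admit nontrivial $k$-admissible solutions in $M$. Assume there exists a nontrivial $k$-admissible solution $(u_{0},v_{0})\in M$ at some $\lambda_{0}\in(0,\underline{\lambda})$. Since by definition no point of the continuum $\mathcal{C}$ projects below $\underline{\lambda}$, the pair $(\lambda_{0},(u_{0},v_{0}))$ does not lie on $\mathcal{C}$, so it belongs to a distinct connected component $\mathcal{C}'$ of the solution set of \eqref{eq:SKS} in $\mathbb{R}^{+}\times E$.

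The next step is to push $\mathcal{C}'$ into the $\lambda$-range $[\lambda_{0},\underline{\lambda}]$ and thereby produce a contradiction. I would run the Rabinowitz-type global continuation argument already used in the proofs of Theorems \ref{thm:SKS1}--\ref{thm:SKS3}. The local Lipschitz continuity and coercivity of $g,h$ in $|s+t|$ provide a priori bounds uniform on bounded $\lambda$-intervals (as in the proof of Theorem~\ref{thm:SKS3}), so $\mathcal{C}'$ is locally compact in $E$ and admits continuation in $\lambda$. The monotonicity of $g$ in $t$ and $h$ in $s$ together with the system maximum principle (again, as used in Theorem~\ref{thm:SKS3}) preserves $k$-admissibility along $\mathcal{C}'$ and prevents degeneration to the trivial branch inside $M$. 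Consequently the sub-arc of $\mathcal{C}'$ starting at $(\lambda_{0},(u_{0},v_{0}))$ stays bounded in $E$, keeps admissibility, and can be extended until it meets the hyperplane $\lambda=\underline{\lambda}$. At that point $\mathcal{C}$ and $\mathcal{C}'$ share a common solution, so by maximality they coincide, and $\mathcal{C}$ projects below $\underline{\lambda}$, contradicting the defining property of $\underline{\lambda}$.

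The case split according to $g_{\infty}=h_{\infty}\in\{0,+\infty\}$ enters at the very last moment. If $g_\infty=h_\infty=+\infty$, then by Theorem~\ref{thm:SKS2} solutions exist on $(0,\lambda^{*})$ so $\underline{\lambda}=0$ and the conclusion is vacuous; if $g_\infty=h_\infty=0$, then the coercivity of $g,h$ is precisely what prevents the $E$-norm along $\mathcal{C}'$ from blowing up before reaching $\underline{\lambda}$, as shown already in Theorem~\ref{thm:SKS3}. Thus the assumption $g_{\infty}=h_{\infty}\notin(0,+\infty)$ rules out the only obstruction (an asymptotically linear regime at infinity) that could prevent the continuation from reaching $\underline{\lambda}$.

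The main obstacle is the continuation step in the second paragraph: ensuring that the arc of $\mathcal{C}'$ traversing $[\lambda_{0},\underline{\lambda}]$ remains in $M$ and stays bounded in $E$, so that compactness of the solution operator actually forces the required intersection with $\mathcal{C}$. This requires combining the system-level strong maximum principle with the a priori $L^{\infty}$ estimate built from coercivity and local Lipschitz continuity; both are available from the framework of Theorem~\ref{thm:SKS3}, but the passage from the scalar corollary of Theorem~\ref{thm:SK3} to the coupled system requires checking that each of these tools carries over componentwise to the pair $(u,v)$.
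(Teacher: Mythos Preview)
Your argument has a genuine gap at its central step. You claim that once the component $\mathcal{C}'$ containing $(\lambda_0,(u_0,v_0))$ is continued to the hyperplane $\{\lambda=\underline{\lambda}\}$, ``$\mathcal{C}$ and $\mathcal{C}'$ share a common solution.'' But reaching the hyperplane only produces \emph{some} solution on $\mathcal{C}'$ at parameter $\underline{\lambda}$; nothing forces that solution to lie on $\mathcal{C}$. Distinct connected components of the solution set are disjoint by definition, so no continuation can make $\mathcal{C}'$ touch $\mathcal{C}$. The continuation itself is also unjustified: $\mathcal{C}'$ does not emanate from a bifurcation point, so Rabinowitz-type theorems give it no global structure; a~priori bounds alone do not rule out that $\mathcal{C}'$ is a single isolated point or a bounded arc that turns back before reaching $\underline{\lambda}$.

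The paper's route is completely different and never tries to connect $\mathcal{C}'$ to $\mathcal{C}$. It works by comparison inside $\mathcal{C}$: given the hypothetical solution $(\underline{u},\underline{v})$ at $\lambda_0<\underline{\lambda}$, one sets
\[
\widetilde{\mathcal{C}}=\{(\lambda,(u,v))\in\mathcal{C}:u\le\underline{u},\ v\le\underline{v}\}
\]
and shows $\widetilde{\mathcal{C}}$ is nonempty (coercivity plus the fact that $\mathcal{C}$ contains points with $\|u\|+\|v\|$ arbitrarily large), closed (trivial), and open relative to $\mathcal{C}$ (strict inequality via the pointwise Hessian comparison and monotonicity of $g,h$, then the narrow-domain maximum principle using local Lipschitz continuity). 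Connectedness of $\mathcal{C}$ gives $\widetilde{\mathcal{C}}=\mathcal{C}$. Since $\mathcal{C}$ also contains $(+\infty,(0,0))$ in the non-vacuous case $f_\infty=0$, one concludes $0\le\underline{u}$, $0\le\underline{v}$, hence $(\underline{u},\underline{v})=(0,0)$, contradicting nontriviality. This is exactly Step~1 of the proof of Theorem~\ref{thm:SKS3}; your observation that the case $f_\infty=+\infty$ is vacuous because then $\underline{\lambda}=0$ is correct.
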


This paper is organized as follows. In \cref{sec:pre}, we present some preliminary results for the bifurcation theory. \cref{sec:HE} is devoted to the case of Hessian equations, while \cref{sec:HS} is about the case of Hessian systems.
    
\section{Preliminaries} 
\label{sec:pre}
Firstly, we introduce some basic results on the $k$-Hessian operator.

Let $\Omega$ be a domain in $\mathbb{R}^{N}$, for $k=1,2,...,N$, and $u\in C^{2}(\Omega)$. Recall that the $k$-th elementary symmetric polynomial acting on a $N$-dim vector $\lambda$ as 
\begin{center}
	$S_{k}(\lambda)=\underset{1\le i_{1}<...<i_{k}\le N}{\sum}\underset{j=1}{\overset{k}{\prod}}\lambda_{i_{j}}$.
\end{center} 
The {\bf $k$-Hessian operator} $S_{k}(D^{2}u)$ is defined as
$S_{k}(D^{2}u):=S_{k}(\lambda(D^{2}u))$
where $D^{2}u=(\frac{\partial^{2}u}{\partial x_{i}\partial x_{j}})$ is the hessian matrix of $u$, $\lambda(D^{2}u)=(\lambda_{1},...,\lambda_{N})$ are the eigenvalues of $D^{2}u$. 

In order to introduce the suitable domains to discuss $k$-Hessian operator, we need to introduce the concept of $k$-convexity. Denote
\begin{center}
	$\Gamma_{k}^N=\left\{\lambda\in \mathbb{R}^{N}:S_{j}(\lambda)>0,~\forall j=1,2,...,k\right\}$
\end{center} be the so-called {\bf G\aa{}rding cone} as in \cite{garding_inequality_1959}.
A domain $\Omega$ is called {\bf (uniformly) $k$-convex}, if its principal curvatures $\kappa=(\kappa_{1},...,\kappa_{N-1})$ of $\partial\Omega$ are belonging to $\overline{\Gamma_{k}^{N-1}}$($\Gamma_{k}^{N-1}$).

Then, we say $u\in C^2(\Omega)$ is {\bf(uniformly) $k$-convex} if 
\begin{center}
	$\lambda(D^2u(x))\in \overline{\Gamma_{k}^N}(\Gamma_{k}^N),~\forall x\in \Omega.$
\end{center}
All continuous $k$-convex functions are collected as $\varPhi^k(\Omega)$, and as $\varPhi_0^k(\Omega)$ for those with zero boundary data, which will be our working space. At last, A function $u\in \varPhi_0^k(\Omega)$ solving \cref{eq:SK} is called a {\bf $k$-admissible} solution. 

\begin{remark}
We can extend the concept of $k$-convex into $C^0$ setting as follow. We call a function $u\in C^0(\Omega)$ being $k$-convex, if there exists a sequence $\{u_m\}\in C^2(\Omega)$ such that in any subdomain $\Omega'\Subset\Omega$, $u_m$ is $k$-convex for sufficiently large $m$ and
converges uniformly to $u$. It is easily seen that $u\in C^0(
\Omega)$ is $k$-convex if and only
if $S_k(D^2u)\geq0$ in the viscosity sense, that is, whenever there exists a
point $y\in\Omega$, and a function $v\in C^2(\Omega)$ satisfying $u(y)=v(y), u\leq v$ in $\Omega$, we must
have $S_k(D^2v)\geq0$.
\end{remark}

Let X be the Banach space $C(\overline{\Omega})$ with supremum norm, and let $\Omega$ be a bounded uniformly $(k$-$1)$-convex domain, consider the equation
\begin{equation}
\label{eq:SKG}
	\begin{cases}
		S_{k}(D^{2}u)=\vert g\vert \qquad& in\quad\Omega \\
    		u=0            \qquad& on\quad \partial\Omega
    \end{cases}
\end{equation}
    
\begin{definition} 
Let $g\in X$. For $k=1,2,...,n$, define the solution operator $T_{k}(g)$ by $$T_{k}(g)=u,$$ where $u\in \Phi_0^{k}(\Omega)$ is the $k$-admissible solution to \eqref{eq:SKG}.
\end{definition}

$T_k$ is well-defined ensuring by \cite{trudinger_weak_1997} and completely continuous by \cite{jacobsen_global_1999}.

\vskip 0.2in

We need to use bifurcation theory through out our proofs, we state two classical theorems here for convenience.
    
\begin{lemma}[Global bifurcation \cite{le_global_1997}] \label{lem:GB}
Let $Y$ be a Banach space, $F:\mathbb{R}\times Y\rightarrow Y$ be completely continuous such that $F(\lambda,0)=0$ for all $\lambda\in \mathbb{R}$. Suppose there exist constants $a,b\in \mathbb{R}$ with $a<b$ such that $(a,0),(b,0)$ are not bifurcation points for the equation
    \begin{center}
    	$y-F(\lambda,y)=0$.
    \end{center}
Furthermore, assume for Leray-Schauder degree that
    \begin{center}
    	$deg(id-F(a,.),B_{r}(0))\ne deg(id-F(b,.),B_{r}(0))$,
    \end{center}
where $B_{r}:=(0)\left\{y\in E:\Vert y\Vert <r\right\}$ is an isolating neighborhood of the trivial solution for both constants $a$ and $b$. Let 
    \begin{center}
    $\mathcal{S}=\overline{\left\{(\lambda,y):y-F(\lambda,y)=0,y\ne 0\right\}}\cup \left\{([a,b]\times \left\{0\right\})\right\}$,
    \end{center}
and $\mathcal{C}$ be the component of $\mathcal{S}$ containing $[a,b]\times \left\{0\right\}$. Then either\\
(1) $\mathcal{C}$ is unbounded in $\mathbb{R}\times Y$, or\\
(2) $\mathcal{C}\cap  [(\mathbb{R}\backslash[a,b])\times \left\{0\right\}]\ne \emptyset$.
\end{lemma}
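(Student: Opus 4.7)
The plan is a proof by contradiction in the spirit of Rabinowitz's classical global bifurcation theorem. Assume both alternatives fail: $\mathcal{C}$ is bounded in $\mathbb{R}\times Y$ and $\mathcal{C}\cap[(\mathbb{R}\setminus[a,b])\times\{0\}]=\emptyset$. The plan is to isolate $\mathcal{C}$ from the remainder of $\mathcal{S}$ by a bounded open set whose boundary carries no solutions, then take the Leray--Schauder degree on slices and compare its value near $\lambda=a$ with its value near $\lambda=b$, forcing them to coincide and contradicting the degree-jump hypothesis.

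First I would carry out the topological isolation. Under the contradiction hypothesis $\mathcal{C}$ is a bounded, closed, connected subset of $\mathbb{R}\times Y$; its intersection with a large closed ball is therefore a compact metric space containing $\mathcal{C}$ as a connected component disjoint from every other component of $\mathcal{S}$. Invoke Whyburn's lemma (a component of a compact metric space which is separated from the rest admits arbitrarily small open neighborhoods whose boundary misses the space) to produce an open set $\mathcal{O}\subset\mathbb{R}\times Y$ with $\mathcal{C}\subset\mathcal{O}$, $\overline{\mathcal{O}}$ bounded, and $\partial\mathcal{O}\cap\mathcal{S}=\emptyset$. Refine $\mathcal{O}$ by cutting with the slab $[a-\epsilon,b+\epsilon]\times Y$ and tubing a thin neighborhood of the trivial arc so that the slices $\mathcal{O}_{a-\epsilon}$ and $\mathcal{O}_{b+\epsilon}$ equal $B_r(0)$ (where $r$ is the isolating radius given by the hypothesis that $(a,0),(b,0)$ are not bifurcation points).

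Next, the degree computation. Define $d(\lambda):=\deg\bigl(\mathrm{id}-F(\lambda,\cdot),\mathcal{O}_\lambda,0\bigr)$ whenever $\mathcal{O}_\lambda:=\{y:(\lambda,y)\in\mathcal{O}\}$ is nonempty. Because $\partial\mathcal{O}\cap\mathcal{S}=\emptyset$, the map $\mathrm{id}-F(\lambda,\cdot)$ has no zero on $\partial\mathcal{O}_\lambda$, so $d(\lambda)$ is well-defined, and the generalized homotopy invariance of the Leray--Schauder degree (combined with the excision property handling slice pinches) yields that $d(\lambda)$ is constant on the $\lambda$-projection of $\overline{\mathcal{O}}$. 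On the one hand $d(a-\epsilon)=\deg(\mathrm{id}-F(a,\cdot),B_r(0))$ and $d(b+\epsilon)=\deg(\mathrm{id}-F(b,\cdot),B_r(0))$ (using that these two maps are homotopic to $\mathrm{id}-F(a\mp\epsilon,\cdot)$ on $B_r(0)$ by the no-bifurcation assumption); on the other hand, constancy of $d$ forces these values to agree, contradicting the standing hypothesis $\deg(\mathrm{id}-F(a,\cdot),B_r(0))\neq\deg(\mathrm{id}-F(b,\cdot),B_r(0))$.

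The hard part will be the construction of $\mathcal{O}$ in the first paragraph: Whyburn's lemma delivers an abstract separating neighborhood, but to apply slice degree theory one must engineer $\mathcal{O}$ so that its boundary is simultaneously free of solutions, its terminal slices reduce to standard balls around the trivial branch, and the parameter projection is exactly an interval. This is done by gluing a thin rectangular tube about the segment $[a,b]\times\{0\}$ onto a Whyburn neighborhood of the nontrivial part of $\mathcal{C}$, then shrinking to recover $\partial\mathcal{O}\cap\mathcal{S}=\emptyset$ using compactness of $\mathcal{S}\cap\overline{\mathcal{O}}$ and the complete continuity of $F$. The remaining steps (homotopy invariance of degree and comparison at the two terminal slices) are essentially standard once the isolation is in hand.
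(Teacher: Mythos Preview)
The paper does not supply a proof of this lemma: it is quoted as a known preliminary result from \cite{le_global_1997} and used as a black box in the subsequent bifurcation arguments. There is therefore nothing in the paper to compare your proposal against.

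That said, your outline is the standard Rabinowitz-type argument (Whyburn separation followed by homotopy invariance of the Leray--Schauder degree on slices) and is correct in spirit. The only place worth sharpening is the handling of the terminal slices: rather than forcing $\mathcal{O}_{a-\epsilon}$ and $\mathcal{O}_{b+\epsilon}$ to literally equal $B_r(0)$, it is cleaner to choose $\mathcal{O}$ so that outside a thin tube $[a,b]\times B_r(0)$ the set $\mathcal{O}$ contains no trivial solutions, and then use excision plus the no-bifurcation hypothesis at $a$ and $b$ to identify $d(\lambda)$ with $\deg(\mathrm{id}-F(\lambda,\cdot),B_r(0),0)$ for $\lambda$ slightly outside $[a,b]$. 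This avoids the somewhat delicate ``gluing a rectangular tube'' step you flag as the hard part.
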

    
\begin{lemma}[Global asymptotic bifurcation \cite{le_global_1997}] \label{lem:GAB} 
Let $Y$ be a Banach space, $F:\mathbb{R}\times Y\rightarrow Y$ be completely continuous such that $F(\lambda,0)=0$ for all $\lambda\in \mathbb{R}$. Suppose there exist constants $a,b\in \mathbb{R}$ with $a<b$ such that the solutions of
    \begin{center}
    	$y-F(\lambda,y)=0$
    \end{center}
are a-priori bounded in $Y$ for $\lambda=a$ and $\lambda=b$; i.e., there exists a constant $M>0$ such that
    \begin{center}
    	$F(a,y)\ne y\ne F(b,y)$,
    \end{center}
for all $y\in Y$ with $\Vert y\Vert \ge M$. Furthermore, assume that
    \begin{center}
    	$deg(id-F(a,.),B_{r}(0))\ne deg(id-F(b,.),B_{r}(0))$,
    \end{center}
for $R>M$. Then there exists at least one continuum $\mathcal{C}$ of solution to $y-F(\lambda,y)=0$ that is unbounded in $[a,b]\times Y$ and either\\
(1) $\mathcal{C}$ is unbounded in the $\lambda$ direction, or else\\
(2) there exists an interval $[c,d]$ such that $(a,b)\cap (c,d)=\emptyset$ and $\mathcal{C}$ bifurcates from infinity in $[c,d]\times Y$. 
\end{lemma}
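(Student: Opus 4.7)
The plan is to reduce the claim to the finite-range global bifurcation alternative already stated as \cref{lem:GB}, via the classical inversion trick $z=y/\|y\|^{2}$ which turns ``bifurcation from infinity'' into ``bifurcation from zero.'' Concretely, for $y\ne 0$ I introduce the involution $\iota(y):=y/\|y\|^{2}$ of $Y\setminus\{0\}$ and set
\begin{equation*}
  G(\lambda,z):=\iota\bigl(F(\lambda,\iota(z))\bigr)\quad(z\ne 0),\qquad G(\lambda,0):=0.
\end{equation*}
Then a solution $y=F(\lambda,y)$ with $\|y\|>M$ corresponds bijectively to a solution $z=G(\lambda,z)$ with $0<\|z\|<1/M$. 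The complete continuity of $F$ together with the a priori bound at $\lambda=a,b$ guarantees that $G$ is completely continuous on a neighbourhood of $[a,b]\times\{0\}$ and that $G(\lambda,0)=0$ for every $\lambda$.

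Next I translate the hypotheses to the transformed equation. The absence of solutions of norm $\ge M$ at $\lambda=a,b$ is precisely the statement that $(a,0)$ and $(b,0)$ are not bifurcation points of $z-G(\lambda,z)=0$ from $z\equiv 0$. Using the standard inversion identity for the Leray--Schauder degree,
\begin{equation*}
  \deg\bigl(\mathrm{id}-F(\lambda,\cdot),B_{R}(0)\bigr)=\deg\bigl(\mathrm{id}-G(\lambda,\cdot),B_{1/R}(0)\bigr)\qquad(R>M),
\end{equation*}
the assumed jump of degree for $F$ at $\lambda=a,b$ transfers verbatim to a degree jump for $G$ near $z=0$. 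I then invoke \cref{lem:GB} on $z-G(\lambda,z)=0$ to obtain a continuum $\Gamma$ of nontrivial solutions emanating from $[a,b]\times\{0\}$ which is either unbounded in $\mathbb{R}\times Y$ or meets $(\mathbb{R}\setminus[a,b])\times\{0\}$. Pushing $\Gamma$ back through $\iota$ (which identifies $\{z=0\}$ with ``$y=\infty$'') yields the desired continuum $\mathcal{C}$: unboundedness of $\Gamma$ in the $\lambda$-direction produces alternative~(1), while $\Gamma$ meeting $(\mathbb{R}\setminus[a,b])\times\{0\}$ produces a bifurcation-from-infinity point at some $\lambda\notin[a,b]$, which is alternative~(2).

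The main technical obstacle lies in the second step: rigorously establishing the inversion identity for the Leray--Schauder degree and checking that $G$ is genuinely completely continuous up to $z=0$. Because $G$ is defined by a singular composition, one must control $F(\lambda,y)$ uniformly as $\|y\|\to\infty$, exclude the vanishing of $F(\lambda,y)$ along the homotopies realizing the degree identity, and upgrade the no-large-solution hypothesis at the two endpoints $\lambda=a,b$ to an admissible pair of isolating neighbourhoods for the transformed problem via a compactness argument across $[a,b]$. Once this analytic work is in place, the dichotomy and the geometric interpretation of the continuum under inversion follow directly from \cref{lem:GB}.
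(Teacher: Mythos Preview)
The paper does not give its own proof of this lemma: it is quoted verbatim from \cite{le_global_1997} as a preliminary tool and used as a black box in \cref{prop:SKEBifur2}. So there is nothing in the paper to compare your argument against beyond the bare citation.

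That said, your inversion has a genuine defect as written. Defining $G(\lambda,z)=\iota\bigl(F(\lambda,\iota(z))\bigr)$ requires $F(\lambda,\iota(z))\ne 0$ for all small $z\ne 0$, which is nowhere assumed; the hypothesis $F(\lambda,0)=0$ says nothing about the vanishing of $F(\lambda,y)$ for large $y$. The standard Rabinowitz--Toland substitution instead sets
\[
\widetilde G(\lambda,z):=\|z\|^{2}\,F\!\left(\lambda,\frac{z}{\|z\|^{2}}\right),\qquad \widetilde G(\lambda,0):=0,
\]
so that $z=\widetilde G(\lambda,z)$ is algebraically equivalent to $y=F(\lambda,y)$ under $z=y/\|y\|^{2}$ without ever inverting $F$. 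However, even $\widetilde G$ is only continuous at $z=0$ if $\|F(\lambda,y)\|/\|y\|^{2}\to 0$ as $\|y\|\to\infty$, a growth condition that is \emph{not} part of the stated hypotheses. In the generality of the lemma one therefore cannot simply invoke \cref{lem:GB} after inversion; the usual proofs in the cited literature proceed instead by a direct Leray--Schauder degree/Whyburn separation argument on large balls, using the degree jump on $\partial B_R$ for every $R>M$ to produce an unbounded component in $[a,b]\times Y$, and then analysing how this component can escape $[a,b]\times Y$. Your last paragraph correctly flags that something nontrivial is needed here, but the fix is not merely technical: the inversion route, as you have set it up, does not go through without additional assumptions on $F$.
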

    
\section{Hessian Equations}
\label{sec:HE}
Now we begin to discuss the following Dirichlet problem of Hessian equation
\begin{equation}
\label{eq:SKE}
	\begin{cases}
		(S_{k}(D^{2}u))^{\frac1k}=\lambda f(-u)\qquad &in \quad\Omega\\
        u=0\qquad&on\quad\partial\Omega
    \end{cases}
\end{equation}
where $\lambda$ is non-negative parameter and $f:\left[0,+\infty\right)\rightarrow\left[0,+\infty\right)$ is a continuous function with zeros only at zero. 

Inspired by \cite{wang_existence_1992}, we have the following a-priori estimate.
\begin{lemma}\label{lem:Apriori}
Consider
\begin{equation}\label{eq:SKAp}
    \begin{cases}
    	S_{k}(D^{2}u)=\Psi(x,u)\qquad&in\quad\Omega\\
    	u=0 \qquad&on\quad\partial\Omega
    \end{cases}
\end{equation}
if there exists a non-increasing function $F(t)\ge 0$ such that 
\begin{equation}\label{eq:GC}
    	\Psi(x,z)\ge F(z) \quad \forall x\in \Omega,~z\le 0 ~~~and ~~~\underset{z\rightarrow-\infty}{lim}F(z)/\vert z\vert ^{k}= +\infty.
\end{equation}
And if $u\in C^{0,1}(\overline{\Omega})$ is an admissible solution of \cref{eq:SKAp}, then $\Vert u\Vert_{0}\leq C$, where $C=C(F,k,\Omega)$.
\end{lemma}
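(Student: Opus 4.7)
My plan is to argue by contradiction via a rescaling (blow-up) argument, combined with the Trudinger--Wang Hessian measure bound. Suppose the conclusion fails, so there exist $k$-admissible solutions $u_n$ of \cref{eq:SKAp} with $M_n := \|u_n\|_0 \to \infty$, attained at points $x_n \in \Omega$. Normalize by setting $v_n := u_n/M_n$, so $\|v_n\|_\infty = 1$, $v_n \leq 0$ in $\Omega$, $v_n = 0$ on $\partial\Omega$, and $v_n(x_n) = -1$. The equation rescales to
\[
S_k(D^2 v_n) \,=\, M_n^{-k}\,\Psi(x, M_n v_n) \,\geq\, M_n^{-k}\,F(M_n v_n) \quad\text{in }\Omega.
\]
Wherever $v_n(x) \leq -\epsilon < 0$ we have $M_n v_n(x) \to -\infty$, and the super-$k$ growth $F(z)/|z|^k \to \infty$ yields
\[
M_n^{-k}\,F(M_n v_n(x)) \,=\, |v_n(x)|^k \cdot \frac{F(M_n v_n(x))}{|M_n v_n(x)|^k} \,\longrightarrow\, +\infty,
\]
so $S_k(D^2 v_n)(x) \to +\infty$ pointwise on any fixed set $\{v_n \leq -\epsilon\}$. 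On the other hand, the Trudinger--Wang Hessian measure theory provides the uniform integral bound
\[
\int_\Omega S_k(D^2 v_n)\,dx \,\leq\, C(N, k, \Omega)\,\|v_n\|_\infty^k \,=\, C(N, k, \Omega).
\]

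To turn these competing facts into a contradiction, I would use the standard interior local Lipschitz (equivalently, uniform modulus-of-continuity) bounds for continuous $k$-convex functions of uniformly bounded $L^\infty$ norm to extract a subsequence $v_n \to v$ locally uniformly on compact subsets of $\Omega$, with $v \in \Phi_0^k(\Omega)$. If the minimizers $x_n$ cluster at an interior point $x_\infty \in \Omega$, then continuity gives $v(x_\infty) = -1$, so $\{v \leq -1/2\}$ has positive Lebesgue measure and is contained in a compact subset of $\Omega$. On this set one has $v_n \leq -1/4$ for all large $n$, hence $\liminf_n S_k(D^2 v_n) = +\infty$ pointwise; Fatou's lemma then yields the contradiction
\[
+\infty \,=\, \int_{\{v \leq -1/2\}} \liminf_n S_k(D^2 v_n)\,dx \,\leq\, \liminf_n \int_\Omega S_k(D^2 v_n)\,dx \,\leq\, C.
\]

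The main obstacle is ruling out the degenerate case $x_n \to \partial\Omega$, where the cluster point lies on the boundary and the local-uniform limit $v$ could vanish identically in the interior. This is handled either by (i) the standard boundary behavior estimates for $k$-admissible solutions on $(k-1)$-convex domains, which prevent the minimum from concentrating on $\partial\Omega$ under the super-$k$ growth of $F$; or, more robustly, by (ii) a finer blow-up centered at $x_n$ with scale $\delta_n := (M_n/F(-M_n)^{1/k})^{1/2} \to 0$, for which the rescaled functions $w_n(y) := v_n(x_n + \delta_n y)$ satisfy a limit equation of the form $S_k(D^2 w) = G(w)$ on $\mathbb{R}^N$ or a half-space, where a Liouville-type theorem rules out non-trivial $k$-convex solutions. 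Once interior clustering is guaranteed, the Fatou step is routine.
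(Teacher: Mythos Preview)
Your approach is genuinely different from the paper's. The paper argues by contradiction via the \emph{uniqueness of the first eigenvalue} of $S_k$: assuming $\|u_j\|_0\to\infty$, it passes to a shrunk copy $\Omega^t=t\Omega$ (so that $\lambda_1(\Omega^t)=t^{-2}\lambda_1(\Omega)$ is large), observes that $u_j\to-\infty$ on $\Omega^t$ and hence $S_k(D^2u_j)\ge F(u_j)\ge\bar\lambda^k|u_j|^k$ there for some $\bar\lambda<\lambda_1(\Omega^t)$, and then uses $u_j$ as a subsolution and a small multiple of the eigenfunction $\phi_t$ as a supersolution of $S_k(D^2u)=|\bar\lambda u|^k$ on $\Omega^t$. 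This manufactures a second eigenvalue $\bar\lambda\neq\lambda_1(\Omega^t)$, contradicting uniqueness. No compactness of the sequence and no boundary blow-up analysis are required.

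Your route has two genuine gaps. First, the compactness step fails for $k<N$: there are no ``standard interior local Lipschitz bounds for continuous $k$-convex functions of uniformly bounded $L^\infty$ norm'' in that range. For $k=1$ and $N\ge3$, the family
\[
v_\epsilon(x)=\max\bigl(-1,\,-\epsilon^{N-2}(|x|^{2-N}-1)\bigr)
\]
on the unit ball is subharmonic, vanishes on the boundary, satisfies $\|v_\epsilon\|_\infty=1$, yet is not equicontinuous on any neighbourhood of the origin; no locally uniformly convergent subsequence exists. Only for $k=N$ (genuine convexity) does the $L^\infty$ bound alone give local Lipschitz control. Without a continuous limit $v$ with $v(x_\infty)=-1$ you have no fixed set of positive measure on which $v_n\le-\tfrac14$ eventually, so the Fatou step has nothing to act on. Indeed, the interior Hessian-measure bound already forces $|\{v_n\le-\epsilon\}\cap\Omega'|\to0$ for every $\Omega'\Subset\Omega$, so the sublevel sets collapse rather than persist. (A minor correction: the Trudinger--Wang bound is interior, $\int_{\Omega'}S_k(D^2v_n)\le C(\Omega',\Omega)$; the global version over $\Omega$ you wrote is false even for $k=N$.)

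Second, even for $k=N$ where compactness is available, the boundary case $x_n\to\partial\Omega$ is not closed. Neither fix you offer is an argument: ``standard boundary behaviour estimates'' is a placeholder, and the half-space Liouville theorem your finer blow-up would need for $S_k(D^2w)=G(w)$ with general superlinear $G$ is neither stated nor standard. The eigenvalue-comparison method in the paper sidesteps both issues at once.
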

    
\begin{proof} For any fixed $t>0$, denote $\Omega^{t}=\left\{tx:x\in \Omega\right\}$. By the uniqueness of eigenvalues we have $\lambda_{1}(\Omega^{t})=t^{-2}\lambda_{1}(\Omega)$. Without loss of generality we may suppose the origin $O\in \Omega$, which implies $\Omega^{t}\subset \Omega$ for any $t<1$.

If the conclusion of \cref{lem:Apriori} is not true, then there exists a sequence of functions $\Psi(x,z)$ with $\Psi(x,z)\ge F(z),~\forall x\in \Omega,~z\le 0$, so that the sequence of solutions $u_{j}$ of 
\begin{center}
    $S_{k}(D^{2}u_{j})=f_{j}(x,u_{j})\quad in ~\Omega,\qquad u_{j}=0\quad   on~\partial\Omega$
\end{center}
satisfies $M_{j}=\Vert u_{j}\Vert_{0}=-\underset{x\in \Omega}{inf}u_{j}\rightarrow+\infty$. From \eqref{eq:GC} there exist $\overline{\lambda}>\lambda_{1}$ and $t\in (0,1)$ small enough such that $\underset{z\rightarrow-\infty}{lim}F(z)/\vert z\vert ^{k}\ge \overline{\lambda}^{k}$ and $\lambda_{1}(\Omega^{t})=t^{-2}\lambda_{1}(\Omega)>\overline{\lambda}$. For $t$ small enough, with taking a sub-sequence if necessary, we can assume  $u_{j}(x)\rightarrow-\infty$ uniformly in $\Omega^{t}$, hence we have 
\begin{center}
    $S_{k}(D^{2}u_{j})\ge \overline{\lambda}^{k}\vert u_{j}\vert^{k}\qquad in ~\Omega^{t}$
\end{center}
provided $j$ is sufficient large. Fix such a $j$. Let $\phi _{t}$ be the eigen-function of the $k$-Hessian operator on $\Omega^{t}$. Replacing $\phi_{t}$ by $s\phi_{t}$ for some small $s$ we may suppose $\phi_{t}(x)>u_{j}(x)$ in $\Omega^{t}$. Thus $\phi_{t}(x)$ and $u_{j}(x)$ are the super-solution and sub-solution of the Dirichlet problem
\begin{equation}\label{eq:SKAA}
    S_{k}(D^{2}u)=\vert \overline{\lambda}u\vert ^{k}\quad in ~\Omega^{t},\qquad u=0\quad on~\partial\Omega^{t},
\end{equation}
respectively. Therefore there exists a solution u of \cref{eq:SKAA} which satisfies $\phi_{t}(x)\ge u(x)\ge u_{j}(x)$. This means both $\overline{\lambda}$ and $t^{-2}\lambda_{1}$ are the eigenvalues of the $k$-Hessian operator on $\Omega^{t}$, which contradicts the uniqueness of eigenvalues.
\end{proof}

\subsection{Proof of \cref{thm:SK1}}
We will study the existence, non-existence, and multiplicity of $k$-admissible solutions of \cref{eq:SK} via bifurcation method. In order to study \cref{eq:SK}, we need the following auxiliary equation which have studied in \cite{jacobsen_global_1999}
\begin{equation}
\label{eq:SKA}
	\begin{cases}
		S_{k}(D^{2}u)=\lambda^{k} (\vert u\vert^{k}+g(-u))\qquad&in\quad\Omega\\
			u=0\qquad&on\quad \partial\Omega
	\end{cases}
\end{equation} 
where $g:\mathbb{R}^{+}\rightarrow\mathbb{R}^{+}$ is continuous. They proved that $(\lambda_{1},0)$ is a bifurcation point of \cref{eq:SKA}. Basis of this, we get existence, non-existence, and multiplicity of $k$-admissible solutions of \cref{eq:SK} of various $f_{0}$ and $f_{\infty}$. These complete all the classifications on $f$, where in \cite{luo_global_2020} have study this problem for partial results, that we proved the following six situations. Furthermore, we proved a more general result about non-existence such as \cref{thm:SK2}.   

We have the following result:
\begin{theorem}[\cref{thm:SK1}]

(1) If $f_{0},~f_{\infty}\in (0,+\infty)$ with $f_{0}\ne f_{\infty}$, \cref{eq:SK} has at least one admissible solution for every $\lambda\in (min\left\{\frac{\lambda_{1}}{f_{0}},\frac{\lambda_{1}}{f_{\infty}}\right\},max\left\{\frac{\lambda_{1}}{f_{0}},\frac{\lambda_{1}}{f_{\infty}}\right\})$.

(2) If $f_{0}\in (0,+\infty)$ and $ f_{\infty}=0$, \cref{eq:SK} has at least one admissible solution for every $\lambda\in (\frac{\lambda_{1}}{f_{0}},+\infty)$.

(3) If $f_{0}\in (0,+\infty)$ and $ f_{\infty}=+\infty$, \cref{eq:SK} has at least one admissible solution for every $\lambda\in (0,\frac{\lambda_{1}}{f_{0}})$.

(4) If $f_{0}=0$ and $ f_{\infty}\in (0,+\infty)$, \cref{eq:SK} has at least one admissible solution for every $\lambda\in (\frac{\lambda_{1}}{f_{\infty}},+\infty)$.
    
(5) If $f_{0}=0$ and $ f_{\infty} =+\infty$, \cref{eq:SK} has at least one admissible solution for every $\lambda\in (0,+\infty)$.
    
(6) If $f_{0}=+\infty$ and $ f_{\infty}\in (0,+\infty)$, \cref{eq:SK} has at least one admissible solution for every $\lambda\in (0,\frac{\lambda_{1}}{f_{\infty}})$.
    
(7) If $f_{0}=+\infty$ and $ f_{\infty}=0$, \cref{eq:SK} has at least one admissible solution for every $\lambda\in (0,+\infty)$.  
\end{theorem}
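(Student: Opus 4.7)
My plan is to cast \cref{eq:SK} as a fixed-point equation for the completely continuous map $F(\lambda,u):=T_k\bigl(\lambda^{k}f(-u)^{k}\bigr)$ on the Banach space $X=C(\overline{\Omega})$, and to track a Rabinowitz-type continuum $\mathcal{C}$ of nontrivial $k$-admissible solutions emanating either from the trivial branch or from infinity. The analysis of the auxiliary problem \cref{eq:SKA} carried out in \cite{jacobsen_global_1999} will supply the essential building block: $(\lambda_1,0)$ is a bifurcation point and the associated continuum is globally unbounded. To transfer this information to \cref{eq:SK}, I would write $f(-u)^{k}=\alpha^{k}|u|^{k}+g_\alpha(-u)$ for a carefully chosen slope $\alpha$, so that the problem fits into the framework of \cref{eq:SKA} with the perturbation term $g_\alpha$ captured by the hypotheses on $f_0$ and $f_\infty$.

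The quantitative heart of the proof is the identification of the bifurcation values. When $f_0\in(0,+\infty)$, linearizing $F(\lambda,\cdot)$ at $u=0$ gives a principal operator behaving like $T_k(\lambda^{k}f_0^{k}|u|^{k})$, so a Leray-Schauder degree jump occurs exactly at $\lambda=\lambda_1/f_0$ and \cref{lem:GB} produces a global continuum from zero there. When $f_\infty\in(0,+\infty)$, an asymptotic linearization combined with \cref{lem:GAB} yields bifurcation from infinity at $\lambda=\lambda_1/f_\infty$. The four degenerate values $f_0,f_\infty\in\{0,+\infty\}$ will be treated as bifurcation "pushed to the endpoints" of the parameter axis: $f_0=+\infty$ (respectively $f_\infty=+\infty$) forces bifurcation from zero (respectively from infinity) at $\lambda=0$; while $f_0=0$ (respectively $f_\infty=0$) pushes it to $\lambda=+\infty$. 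With these locations identified, each of the seven cases follows by a topological argument: the continuum $\mathcal{C}$ is connected, cannot cross the trivial branch except at its birth point, and cannot terminate in the interior of the admissible cone, so its projection onto the $\lambda$-axis must fill in the closed interval between its birth and its escape to infinity. In particular, case (1) traps $\mathcal{C}$ between $\lambda_1/f_0$ and $\lambda_1/f_\infty$; cases (2) and (4) force $\mathcal{C}$ to extend to $+\infty$; cases (3) and (6) trap it in $(0,\lambda_1/f_{0\text{ or }\infty})$; and cases (5) and (7) cover all of $(0,+\infty)$. The a-priori $C^{0}$ bound from \cref{lem:Apriori}, applied on compact $\lambda$-intervals away from the asymptotic bifurcation value, rules out the possibility that $\mathcal{C}$ "blows up" at some interior finite $\lambda$.

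The main obstacle will be the careful handling of the degenerate limits $f_0,f_\infty\in\{0,+\infty\}$, where the Frechet linearizations that underlie \cref{lem:GB} and \cref{lem:GAB} are not directly defined. My strategy is to approximate $f$ by a sequence $f^{(n)}$ with $(f^{(n)})_0$ or $(f^{(n)})_\infty$ equal to a large finite $n$, apply the non-degenerate version of bifurcation theory to the approximate problems, and then pass to the limit $n\to\infty$ using the complete continuity and monotonicity of $T_k$ together with Arzela-Ascoli. The delicate point is to ensure that the limiting solutions stay in the $k$-admissible cone $\Phi_0^{k}(\Omega)$ and that the Leray-Schauder degree jump is preserved in the right direction on the limiting interval; this will rely on the viscosity characterization of $k$-convexity recalled in the remark after \cref{eq:SKG} and on uniform versions of \cref{lem:Apriori} for the approximating family, both of which hold under the structural hypotheses on $f$.
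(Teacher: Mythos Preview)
Your overall plan matches the paper's: cast the problem via $T_k$, invoke Jacobsen's global bifurcation at $\lambda_1/f_0$, identify the asymptotic bifurcation value $\lambda_1/f_\infty$ by rescaling, and handle the degenerate endpoints by approximation. Two points where the paper's execution differs from your sketch are worth knowing.

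First, for the degenerate limits $f_0,f_\infty\in\{0,+\infty\}$ the paper does \emph{not} attempt to track Leray--Schauder degrees through the approximation as you propose; this is indeed delicate and unnecessary. Instead it applies the already-established non-degenerate cases to each $f^{(n)}$, obtaining continua $\mathcal{C}_n$ linking explicit endpoints, and then invokes a purely topological ``$\limsup$ of continua'' lemma (Lemma~2.5 of \cite{dai_two_2016}, Lemma~4.11 of \cite{dai_eigenvalue_2015}): precompactness of $\bigcup_n\mathcal{C}_n$ on bounded sets follows from the complete continuity of $T_k$, and the lemma delivers a connected limiting set $\mathcal{C}$ containing the limit endpoints. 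This is cleaner than degree-stability, which you correctly flag as subtle.

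Second, be careful with your use of \cref{lem:Apriori} to exclude interior blow-up. That lemma requires the superlinear condition $\lim_{z\to-\infty}F(z)/|z|^k=+\infty$, which is available only when $f_\infty=+\infty$ (cases (3),(5)). In case~(1) with $f_\infty\in(0,+\infty)$, and in cases~(2),(4) with $f_\infty=0$, the paper instead identifies the blow-up value directly: one rescales $v_n=u_n/\|u_n\|$, passes to a limit solving the eigenvalue problem, and concludes $\lambda_n\to\lambda_1/f_\infty$ (or derives a contradiction to $\|v_0\|=1$ when $f_\infty=0$). The boundedness of $\{\lambda_n\}$ along $\mathcal{C}$ in case~(1) is obtained not from \cref{lem:Apriori} but from an eigenvalue comparison via the strong maximum principle (take $\delta^*$ maximal with $u_n\le\delta^*\varphi_1$ and reach a contradiction). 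Cases (2),(4),(7) are cited from \cite{luo_global_2020}.
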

   
\begin{proof}
    
The cases (2),(4),(7) have been proved in \cite{luo_global_2020}, so we proceed to show the remaining part. we denote
\begin{center}
    $	f_{0}^{k}:=\underset{s\rightarrow0^{+}}{lim}\frac{f(s)}{s^{k}}$ and $f_{\infty}^{k}:=\underset{s\rightarrow+\infty}{lim}\frac{f(s)}{s^{k}}$.
\end{center}.
    
(1): Let $g:\mathbb{R}^{+}\rightarrow\mathbb{R}^{+}$, $f(s)=f_{0}^{k} s^{k}+g(s)$, $\underset{s\rightarrow0^{+}}{lim}\frac{g(s)}{s^{k}}=0$. Then by \cite{jacobsen_global_1999} there exists an unbounded continuum $\mathcal{C}$ of the set of admissible solution of \cref{eq:SKA} emanating from $(\frac{\lambda_{1}}{f_{0}},0)$ such that $\mathcal{C}\subset(\mathbb{R}\times \Phi^{k}_{0})\cup \left\{(\frac{\lambda_{1}}{f_{0}},0)\right\}$. Now we want to show that $\mathcal{C}$ links $(\frac{\lambda_{1}}{f_{0}},0)$ to $(\frac{\lambda_{1}}{f_{\infty}},\infty)$ in $\mathbb{R}\times X$. Let $(\lambda_{n},u_{n})\in \mathcal{C}$ satisfy $\lambda_{n}+\Vert u_{n}\Vert\rightarrow+\infty$ as $n\rightarrow+\infty$ and
    \begin{center}
    	$\begin{cases}
    			S_{k}(D^{2}u_{n})=\lambda_{n}^{k}f^{k}(-u_{n})\qquad &in \quad\Omega\\
    		u_{n}=0\qquad&on\quad\partial\Omega
    	\end{cases}$
    \end{center}
Since 0 is the only solution of \cref{eq:SK} for $\lambda=0$, so we have $\lambda_{n}>0$ for all $n\in \mathbb{N}$. We claim that there exists $M>0$ such that $\lambda_{n}\in \left(0,M\right]$. On the contrary, we suppose that $\lambda_{n}\rightarrow+\infty$. Since $f_{0},~f_{\infty}\in (0,+\infty)$, there exist a positive constant $\sigma$ such that 
    \begin{center}
    	$f(-u_{n})/\vert u_{n}\vert \ge \sigma~for ~any ~n\in \mathbb{N}$.
    \end{center}
So we have $\underset{n\rightarrow+\infty}{lim}\frac{\lambda_{n}^{k}f^{k}(-u_{n})}{\vert u_{n}\vert ^{k}}=+\infty$. By \cref{lem:Apriori} if necessary, for fixed n large, we always have $\Vert u_{n}\Vert \le C$ where $C>0$ is a constant and $\frac{\lambda_{n}^{k}f^{k}(-u_{n})}{\vert u_{n}\vert ^{k}}=\overline{\lambda}^{k}>\lambda^{k}_{1}$.
Let $v$ satisfies 
    \begin{center}
    	$\begin{cases}
    			S_{k}(D^{2}v)=\lambda_{1}^{k}\vert v\vert ^{k}\qquad &in \quad\Omega\\
    		v=0\qquad&on\quad\partial\Omega
    	\end{cases}$
    \end{center}
By scaling if necessary, for this n, we can assume $u_{n}(x)<v(x)$ for all $x\in \Omega$. Let $\delta^{*}>0$ be the maximal such that $u_{n}-\delta^{*}v\le 0$ in $\Omega$, i.e., there exists $x_{0}\in \Omega$ such that $u_{n}(x_{0})=v(x_{0})$. Let $\omega=\delta^{*}v$, $L_{\omega}=\Sigma_{i,j=1}^{n}F_{ij}(D^{2}\omega)D_{ij}$, then by \cite{caffarelli_dirichlet_1985} we have
    \begin{center}
    	$L_{k}(u_{n}-\delta^{*}v)\ge F_{k}(D^{2}u_{n})-F_{k}(D^{2}\omega)
    	=\lambda_{n}f(-u_{n})-(\lambda_{1}\delta^{*}\vert v\vert)
    	\ge (\overline{\lambda}\vert u_{n}\vert)-(\lambda_{1}\delta^{*}\vert v\vert)\ge 0$
    \end{center}
since $\overline{\lambda}\ge\lambda_{1}$ and $0<\sigma^{*}\vert v\vert\le \vert u_{n}\vert$ for all $x\in \Omega$. That implies, by strong maximum principle, $u_{n}(x)=\delta^{*}v$ for all $x\in \Omega$. Therefore, $S_{k}(D^{2}u_{n})=S_{k}(D^{2}\omega)$, or
    \begin{center}
    	$\lambda_{1}^{k}\vert u_{n}\vert ^{k}=\lambda_{1}^{k}\vert \delta^{*}v\vert ^{k}=\lambda_{n}^{k}f^{k}(-u_{n})> \lambda_{1}^{k}\vert u_{n}\vert^{k}$
    \end{center}
That is impossible, so only $\Vert u_{n}\Vert\rightarrow+\infty$ as $n\rightarrow+\infty$.

Let $\eta:\mathbb{R}^{+}\rightarrow\mathbb{R}^{+}$ be such that 
    \begin{center}
    $	f^{k}(s)=f_{\infty}^{k}s^{k}+\eta(s)$
    \end{center}
with $\underset{s\rightarrow+\infty}{lim}\eta(s)/s^{k}=0$. For $\Vert u_{n}\Vert \rightarrow+\infty$, we consider the following equation
    \begin{center}
    	$\begin{cases}
    	S_{k}(D^{2}u_{n})=\lambda_{n}^{k}(f_{\infty}^{k}\vert u_{n}\vert ^{k}+\eta (-u_{n}))\qquad &in \quad\Omega\\
    	u_{n}=0\qquad&on\quad\partial\Omega
    	\end{cases}$.
    \end{center}
Let $v_{n}=\frac{u_{n}}{\Vert u_{n}\Vert}$. Then $\Vert v_{n}\Vert=1$ and is the $k$-admissible solution to
    \begin{center}
    	$\begin{cases}
    		S_{k}(D^{2}w)=\lambda_{n}^{k}(f_{\infty}^{k}\vert w\vert ^{k}+\frac{\eta(-u_{n})}{	\Vert u_{n}\Vert^{k}})\qquad &in \quad\Omega\\
    	w=0\qquad&on\quad\partial\Omega
    	\end{cases}$
    \end{center}
Let
    \begin{center}
    	$\widetilde{\eta}(t)=\underset{0<\vert s\vert\le t}{max}\vert \eta(s)\vert$,
    \end{center}
then $\widetilde{\eta}(t)$ is non-decreasing with respect to $t$. Define
    \begin{center}
    	$\overline{\eta }(t)=\underset{t/2\le \vert s\vert \le t}{max}\vert \eta(s)\vert$.
    \end{center}
Then we have that 
    \begin{center}
    	$\underset{t \rightarrow+\infty}{lim}\frac{\overline{\eta }(t)}{\vert t\vert ^{k}}=0$ and $\widetilde{\eta}(t)\le \widetilde{\eta}(\frac{t}{2})+\overline{\eta }(t)$.
    \end{center}
It follows that 
    \begin{center}
    	$\underset{t \rightarrow+\infty}{limsup}~\frac{\widetilde{\eta }(t)}{\vert t\vert ^{k}}\le \underset{t\rightarrow+\infty}{limsup}~\frac{\widetilde{\eta }(\frac{t}{2})}{\vert t\vert ^{k}}=\underset{t/2\rightarrow+\infty}{limsup}~\frac{\widetilde{\eta }(\frac{t}{2})}{2^{k}\vert \frac{t}{2}\vert ^{k}}$.
    \end{center}
So we have that 
    \begin{equation}
    	\underset{t \rightarrow+\infty}{lim}\frac{\widetilde{\eta }(t)}{\vert t\vert ^{k}}=0.
    \end{equation}
It follows that
    \begin{equation}
    	\frac{\eta (-u)}{\Vert u\Vert ^{k}}\le 	\frac{\vert \eta (- u )\vert }{\Vert u\Vert ^{k}}\le 	\frac{\widetilde{\eta} (\Vert u\Vert )}{\Vert u\Vert ^{k}}\rightarrow0~as~\Vert u\Vert \rightarrow+\infty.
    \end{equation}
Since $v_{n}$ is bounded in X, after taking a sub-sequence if necessary, we have that $v_{n}\rightarrow v$ in $C^{0}(\overline{\Omega})$ as $n\rightarrow+\infty$ and $v$ solve the equation 
    \begin{center}
    	$\begin{cases}
    		S_{k}(D^{2}v)=\overline{\lambda}^{k}f_{\infty}^{k}\vert v\vert ^{k}\qquad &in \quad\Omega\\
    	v=0\qquad&on\quad\partial\Omega
    	\end{cases}$
    \end{center}
where $\overline{\lambda}=\underset{n\rightarrow}{lim}~\lambda_{n}$, then by Theorem 3.4 in \cite{jacobsen_global_1999}, we have $\overline{\lambda}=\frac{\lambda_{1}}{f_{\infty}}$. Therefore, $\mathcal{C}$ links $(\lambda_{1}/f_{0},0)$ to $(\lambda_{1}/f_{\infty},\infty)$. 

So \cref{eq:SK} has at least one admissible solution for every $\lambda\in (min\{\frac{\lambda_{1}}{f_{0}},\frac{\lambda_{1}}{f_{\infty}}\}$,$max\{\frac{\lambda_{1}}{f_{0}},\frac{\lambda_{1}}{f_{\infty}}\})$.
    
(3): By (1), there exists an unbounded continuum $\mathcal{C}$ of the set of $k$-admissible solution of \cref{eq:SK} emanating from $(\frac{\lambda_{1}}{f_{0}},0)$ such that $\mathcal{C}\subset(\mathbb{R}\times \Phi^{k}_{0})\cup \left\{(\frac{\lambda_{1}}{f_{0}},0)\right\}$. Then argument as in (1), we can prove that $\mathcal{C}$ is bounded in the direction of $\lambda$. So $\mathcal{C}$ is unbounded in the direction of $X$.
	
We claim that $0$ is the only blow-up points.
    
On the contrary, if there exists $\lambda_{*}>0$ such that $(\lambda_{n},u_{n})\in \mathcal{C}$ satisfying $\lambda_{n}\rightarrow\lambda_{*}$, $\Vert u_{n}\Vert \rightarrow+\infty$ as $n\rightarrow+\infty$. In view of $f_{\infty}=+\infty$, so for $n$ large enough, we have$\frac{\lambda_{n}^{k}f^{k}(-u_{n})}{\vert u_{n}\vert ^{k}}>\overline{\lambda}^{k}\ge \lambda_{1}^{k}$ for some constant $\overline{\lambda}\ge \lambda_{1}$, where $\lambda_{1}$ is the first eigenvalue of the Hessian equation. Let $v$ satisfies 
	\begin{center}
		$\begin{cases}
			S_{k}(D^{2}v)=\lambda_{1}^{k}\vert v\vert ^{k}\qquad &in \quad\Omega\\
			v=0\qquad&on\quad\partial\Omega
		\end{cases}$
	\end{center}
By scaling if necessary, for $n$ large enough, we can assume $u_{n}(x)<v(x)$ for all $x\in \Omega$. Let $\delta^{*}>0$ be the maximal such that $u_{n}-\delta^{*}v\le 0$ in $\Omega$, i.e., there exists $x_{0}\in \Omega$ such that $u_{n}(x_{0})=v(x_{0})$. Let $\omega=\delta^{*}v$, $L_{\omega}=\Sigma_{i,j=1}^{n}F_{ij}(D^{2}\omega)D_{ij}$, then by\cite{caffarelli_dirichlet_1985}
	\begin{center}
		$L_{w}(u_{n}-\delta^{*}v)\ge F_{k}(D^{2}u_{n})-F_{k}(D^{2}\omega)
		=\lambda_{n}f(-u_{n})-(\lambda_{1}\delta^{*}\vert v\vert)
		\ge (\overline{\lambda}\vert u_{n}\vert)-(\lambda_{1}\delta^{*}\vert v\vert)\ge 0$
	\end{center}
since $\overline{\lambda}\ge\lambda_{1}$ and $0<\sigma^{*}\vert v\vert\le \vert u_{n}\vert$ for all $x\in \Omega$. That implies, by strong maximum principle, $u_{n}(x)=\delta^{*}v$ for all $x\in \Omega$. Therefore, $S_{k}(D^{2}u_{n})=S_{k}(D^{2}\omega)$, or
	\begin{center}
		$\lambda_{1}^{k}\vert u_{n}\vert ^{k}=\lambda_{1}^{k}\vert \delta^{*}v\vert ^{k}=\lambda_{n}^{k}f^{k}(-u_{n})> \lambda_{1}^{k}\vert u_{n}\vert^{k}$
	\end{center}
which is impossible. Therefore, $\mathcal{C}$ links $(\frac{\lambda_{1}}{f_{0}},0)$ to $(0,\infty)$. Thus, \cref{eq:SK} has at least one admissible solution for every $\lambda\in (0,\frac{\lambda_{1}}{f_{0}})$. 
 
(5): Define
	\begin{center}
		$f^{k}_{n}(s)=	
		\begin{cases}
			(\frac{1}{n}s)^{k}+g^{n}(s) \qquad&s\in [0,\frac{1}{n}]\\
			f^{k}(s)\qquad&s\in \left[\frac{2}{n},+\infty\right)
		\end{cases}$
	\end{center}
where $\underset{s\rightarrow0}{lim}\frac{g^{n}(s)}{s^{k}}=0$. We have $\underset{n\rightarrow+\infty}{lim}f_{n}(s)=f(s)$ and $f_{n,0}=\frac{1}{n}$, $f_{n,\infty}=0$. Then, consider
	\begin{equation}
		\begin{cases}
			S_{k}(D^{2}u)=\lambda^{k}f^{k}_{n}(-u)\qquad&in\quad\Omega\\
			u=0\qquad&on\quad\partial\Omega
		\end{cases}
	\end{equation}
By (3), we get $\mathcal{C}_{n}$ of admissible solutions of the above problem emanating from $(n\lambda_{1},0)$ and links to $(0,\infty)$. Taking $z^{*}=(+\infty,0)$, we have that $z^{*}\in \underset{n\rightarrow+\infty}{lim}\mathcal{C}^{n}$. The compactness of $T_{k}$ in \cite{jacobsen_global_1999} implies that $(\bigcup_{n=1}^{+\infty}\mathcal{C}^{n})\cap B_{R}$ is precompact, where $B_{R}=\left\{z\in \mathbb{R}\times X:\Vert z\Vert_{\mathbb{R}\times X}<R\right\}$ for any $R>0$. Using Lemma 2.5 of \cite{dai_two_2016}, we obtain that $\mathcal{C}:=\underset{n\rightarrow+\infty}{limsup}~\mathcal{C}^{n}$ is an unbounded connected set such that $z^{*}\in \mathcal{C}$. Thus $\mathcal{C}$ links $(+\infty,0)$ to $(0,\infty)$. By the compactness of $T_{k}$, we immediately get $\mathcal{C}$ is the set of $k$-admissible solutions of \cref{eq:SK}, which hence has at least one admissible solution for every $\lambda\in (0,+\infty)$.

(6): Let
	\begin{center}
		$f^{k}_{n}(s)=	
		\begin{cases}
			(\frac{1}{n}s)^{k}+g^{n}(s) \qquad&s\in [0,\frac{1}{n}]\\
			f^{k}(s)\qquad&s\in \left[\frac{2}{n},+\infty\right)
		\end{cases}$
	\end{center}
where $\underset{s\rightarrow0}{lim}\frac{g^{n}(s)}{s^{k}}=0$. We have $\underset{n\rightarrow+\infty}{lim}f_{n}(s)=f(s)$ and $f_{n,0}=\frac{1}{n}$, $f_{n,\infty}=f_{\infty}$. Then, consider
	\begin{equation}
		\begin{cases}
			S_{k}(D^{2}u)=\lambda^{k}f^{k}_{n}(-u)\qquad&in\quad\Omega\\
			u=0\qquad&on\quad\partial\Omega
		\end{cases}
	\end{equation}
By (1), we get $\mathcal{C}_{n}$ of admissible solutions of the above problem emanating from $(n\lambda_{1},0)$ and links to $(\frac{\lambda_{1}}{f_{\infty}},\infty)$. Then argument as in (5), we would get the admissible set $\mathcal{C}$ of \cref{eq:SK} satisfies $(+\infty,0)\in \mathcal{C}$. Then similar to the proof of (1), we can get $\mathcal{C}$ links $(+\infty,0)$ to $(\frac{\lambda_1}{f_{\infty}},\infty)$ which hence has at least one $k$-admissible solution for every $\lambda\in (0,\frac{\lambda_{1}}{f_{\infty}})$.
\end{proof}

\subsection{Proof of \cref{thm:SK2,thm:SK3}}

We now deal with the extreme case of $f_{0}=f_{\infty}=+\infty$ inspired by \cite{luo_global_2020}.

\newenvironment{prf1}{{\noindent\bf Proof of \cref{thm:SK2}.}}{\hfill $\square$\par}
\begin{prf1}
In view of $f_{0}=+\infty$, from the argument of \cite{dai_global_2018} Theorem 1.1(b), we have that there is an unbounded continuum $\mathcal{C}$ of admissible solution of \cref{eq:SK} and $(0,0)\in \mathcal{C}$. Since $f_{0}=+\infty,~f_{\infty}=+\infty$ and the sign condition $f(s)>0$ for any $s>0$. If $\mathcal{C}$ is unbounded in the direction of $\lambda$, then argument as (1), we can get contradiction. It follows that $\mathcal{C}$ is unbounded in the direction of X.

We claim that $(0,0)$ is the unique blow up point of $\mathcal{C}$. Otherwise,there must exist a blow up point $(\widetilde{\lambda},0)$ with $\widetilde{\lambda}\in (0,+\infty )$ of $\mathcal{C}$. And then there exist$\left\{(\lambda_{n},u_{n})\right\}\in \mathcal{C}$ such that $\underset{n\rightarrow+\infty}{lim}\lambda_{n}=\widetilde{\lambda}$ and $\underset{n\rightarrow+\infty}{lim}\Vert u_{n}\Vert =+\infty$. Since $f_{\infty}=+\infty$, we have for any $M>0$, for $n$ large, $\frac{f^{k}(\Vert u_{n}\Vert)}{\Vert u_{n}\Vert ^{k}}\ge M$ which means $\underset{n\rightarrow+\infty}{lim}\lambda_{n}^{k}\frac{f^{k}(\Vert u_{n}\Vert)}{\Vert u_{n}\Vert ^{k}}=+\infty$. But by \cref{lem:Apriori}, we have that $\Vert u_{n}\Vert $ is bounded for any n which contradict with $\underset{n\rightarrow+\infty}{lim}\Vert u_{n}\Vert =+\infty$. Therefore, $(0,0)$ is the unique blow-up point. 

Let $\lambda^{*}=sup_{(\lambda,u)\in \mathcal{C}}\left\{\lambda\right\}$. It is clearly that $\lambda^{*}\in (0,+\infty)$ so \cref{eq:SK} has at least two nontrivial admissible solutions for $\lambda>0$ small enough. Obviously, there exists at least one nontrivial admissible solution at $\lambda=\lambda^{*}$. We divide the rest of proof into two steps.

{\bf Step 1}. For $\lambda>\lambda^{*}$, \cref{eq:SK} has no solution.

Assume that there exists an $k$-admissible solution $\overline{u}$ of \cref{eq:SK} for some $\overline{\lambda}>\lambda^{*}$. By \cref{lem:Apriori}, we have $\Vert \overline{u}\Vert_{0}\le C$ for some positive constant $C$ that only depend on $f,k~and ~\Omega$. Let $\mathcal{C}'=\left\{(\lambda,u)\in \mathcal{C}:u\ge\overline{u}\right\}$.

We $claim~\mathcal{C}'=\mathcal{C}$.

It is sufficient to show that $\mathcal{C}'$ is not empty, open and closed relative to $\mathcal{C}$. It is clearly that $\mathcal{C}$ is nonempty since $(0,0)\in \mathcal{C}$. The closeness of $\mathcal{C}'$ is obvious by the closeness of $\mathcal{C}$ and the definition of $\mathcal{C}'$. So we only need to show that $\mathcal{C}'$ is open relative to $\mathcal{C}$. For any $(\lambda_{0},u_{0})\in \mathcal{C}'$,we have that $u_{0}>\overline{u}~in ~\Omega$. For if not, there exists $x_{0}\in \Omega$ such that $u_{0}(x_{0})=\overline{u}(x_{0})$. Thus $x_{0}$ is a locally minimal point of $u_{0}-\overline{u}$, then $D_{ij}[u_{0}-\overline{u}](x_{0})$ is positive semi-definite. Since $u_{0},~\overline{u}\in \Phi^{k}(\Omega)$, by \cite{bhattacharya_maximum_2021}, we have
    \begin{center}
    	$S_{k}(D^{2}u_{0}(x_{0}))\ge S_{k}(D^{2}\overline{u}(x_{0}))$, i.e. $\lambda_{0}^{k}f^{k}(-u_{0}(x_{0}))\ge \overline{\lambda}^{k}f^{k}(-\overline{u}(x_{0}))$.
    \end{center}
But since $u_{0}(x_{0})=\overline{u}(x_{0})$ and $\overline{\lambda}>\lambda^{*}\ge \lambda_{0}$, The conclusion is impossible. Therefore, $u_{0}>\overline{u}~in ~\Omega$.

For any $\varepsilon>0$, we can choose a open neighborhood $B\subset \mathbb{R}\times X$ of$(\lambda_{0},u_{0})$ satisfying $\Vert u-u_{0}\Vert <\varepsilon$. Then for any $(\lambda,u)\in B\cap \mathcal{C}$, we have $u\ge\overline{u} $ in $\Omega_{\varepsilon}=\left\{x\in \Omega:u_{0}(x)\ge \overline{u}(x)+\varepsilon\right\}$.

We claim that there exists $\varepsilon$ small enough such that $u\ge\overline{u} $~in $\Omega^{'}:=\Omega\backslash\Omega_{\varepsilon}$.
In fact, let $L_{u}=\Sigma_{i,j=1}^{n}F_{ij}(u)D_{ij}$, then $L_{u}$ is elliptic linear operator as $u\in \Phi^{k}(\Omega)$. By \cite{caffarelli_dirichlet_1985},
    \begin{center}
    	$L_{u}(v-u)=\Sigma_{i,j=1}^{n}F_{ij}(u)D_{ij}(v-u)\ge F_{k}(v)-F_{k}(u)$
    \end{center}
for $u,~v\in \Phi^{k}(\Omega)$. Thus, by linearity, $L_{u}(u-v)\le F_{k}(u)-F_{k}(v)$. Let $v=\overline{u}$, $w(x)=u(x)-\overline{u}(x)$ then we have
    \begin{center}
    	$\begin{cases}
    	L_{u}(u-v)+h(x)w(x)\le 0\qquad&in ~\Omega'\\
    	w(x)\ge 0\qquad&in ~\partial \Omega'
    	\end{cases}$
    \end{center}
where $h(x)=\frac{\overline{\lambda}f(-\overline{u}(x))-\lambda f(-u(x))}{u(x)-\overline{u}(x)}$ is locally bounded. By theorem 2.32 of \cite{han_elliptic_2011}, $w(x)\ge 0$ in $\Omega'$ for $\varepsilon$ small enough. So $\mathcal{C}'$ is open relative to $\mathcal{C}$. Therefore, we get $\mathcal{C}'=\mathcal{C}$.

Since $\mathcal{C}'=\mathcal{C}$ is unbounded in the direction of X, there exists $(\lambda_{n},u_{n})\in \mathcal{C}'$ such that $\Vert u_{n}\Vert_{0}\rightarrow+\infty$ as $n\rightarrow+\infty$. So we have $\Vert \overline{u}\Vert_{0}=+\infty$, which is a contradiction.

{\bf Step 2}. For $\lambda<\lambda_{*}$, \cref{eq:SK} has at least two admissible solutions.

Let $\widetilde{\lambda}=sup\left\{\lambda>0:\exists u_{1}\not\equiv u_{2}~s.t.~ (\lambda,u_{1}),(\lambda,u_{2})\in \mathcal{C}\right\}$. Suppose on the contrary, that $\widetilde{\lambda}<\lambda^{*}$. Take $\hat{\lambda}\in \left[\widetilde{\lambda},\lambda^{*}\right)$ such that there exists a unique $\hat{u}$ such that $(\hat{\lambda},\hat{u})\in \mathcal{C}$. Define
    \begin{center}
    	$\mathcal{C}''=\left\{(\lambda,u)\in \mathcal{C}:\lambda\le \hat{\lambda}\right\}$.
    \end{center}
Clearly, $\mathcal{C}''$ is connected and closed. Then let $\overline{\mathcal{C}}=\left\{(\lambda,u)\in \mathcal{C}'':u\ge u''\right\}$, where $(\lambda'',u'')$ is a solution of \cref{eq:SK} and $\lambda''>\hat{\lambda}$. So we can repeat the argument of Step 1 to show that \cref{eq:SK} have no solution for $\lambda>\hat{\lambda}$, which is a contradiction.
\end{prf1}

\vskip 0.2in

We now deal with another extreme case of $f_{0}=f_{\infty}=0$.

\newenvironment{prf2}{{\noindent\bf Proof of \cref{thm:SK3}.}}{\hfill $\square$\par}

\begin{prf2}
Let 
\begin{center}
    $f^{k}_{n}(s)=	
    \begin{cases}
    	(\frac{1}{n}s)^{k}+g^{n}(s) \qquad&s\in [0,\frac{1}{n}]\\
    	f^{k}(s)\qquad&s\in \left[\frac{2}{n},+\infty\right)
    \end{cases}$
\end{center}
where $\underset{s\rightarrow0}{lim}\frac{g^{n}(s)}{s^{k}}=0$. We have $\underset{n\rightarrow+\infty}{lim}f_{n}(s)=f(s)$ and $f_{n,0}=\frac{1}{n}$, $f_{n,\infty}=0$. By Theorem 1.1(a) in \cite{dai_global_2018}, we get that continuum $\mathcal{C}_{n}$ of admissible solutions emanating $(n\lambda_{1},0)$ and is unbounded in the $\lambda$ direction. We claim that if $\left\{\lambda_{n},u_{n}\right\} \in \mathcal{C}_{n}$ satisfy $\lambda_{n}\rightarrow+\infty$ and $u_{n}\nrightarrow0$, then $\Vert u_{n}\Vert \rightarrow+\infty$. For if not, suppose that $0<\Vert u_{n}\Vert\le M$ for some constant $M>0$. Then, we have $\frac{f(-u_{n})}{\vert u_{n}\vert^{k} }\ge \delta$ for some $\delta >0$. Furthermore, for $n$ sufficient large, we have $\lambda_{n}^{k}\frac{f^{k}(-u_{n})}{\vert u_{n}\vert^{k} }>\lambda_{1}^{k}$. From the argument as (1), we will get contradiction. Hence, $\mathcal{C}_{n}$ link $(n\lambda_{1},0)$ to $(+\infty,\infty )$. Now, taking $z^{1}_{n}=(n\lambda_{1},0)$, $z^{2}_{n}=(+\infty,+\infty)$. The compactness of $T_{f}$ in \cite{jacobsen_global_1999} implies that $(\bigcup_{n=1}^{+\infty} \mathcal{C}_{n})\cap B_{R}$ is precompact. So by Lemma 4.11 in \cite{dai_eigenvalue_2015} there exists unbounded component $\mathcal{C}$ such that $(+\infty,0)\in \mathcal{C}$ and $(+\infty,\infty)\in  \mathcal{C}$.

Let $\lambda_{*}=\underset{(\lambda,u)\in  \mathcal{C}}{inf}\left\{\lambda\right\}$. Obviously, there exists at least one nontrivial admissible solution at $\lambda=\lambda_{*}$. We divide the rest of the proof into two steps.

{\bf Step 1}. When $\lambda<\lambda_{*}$ \cref{eq:SK} has no nontrivial solution.

Assume there exists a solution $\underline{u}$ of \cref{eq:SK} for some $\underline{\lambda}<\lambda_{*}$. By Lemma 6.1 of \cite{chou_variational_2001} we  have $\Vert\underline{u}\Vert_{0}\le C$ for some positive constant C. Let $\mathcal{C}'=\left\{(\lambda,u)\in \mathcal{C}:u\le \underline{u}\right\}$.

We $claim~\mathcal{C}'=\mathcal{C}$.

It is sufficient  to show that $\mathcal{C}'$ is not empty, open and closed relative to $\mathcal{C}$. It is clearly that $\mathcal{C}'$ is nonempty since $(+\infty,\infty)\in \mathcal{C}'$ and $f$ is coercive. In fact, for $(\lambda_{n},u_{n})\in \mathcal{C}$ satisfy $\Vert u_{n}\Vert \rightarrow+\infty$ and $\lambda_{n}\rightarrow+\infty$ as $n\rightarrow+\infty$. Then for $n$ large enough, since $f$ is coercive, we always have $\lambda_{n}^{k}f^{K}(-u_{n})\ge \underline{\lambda}^{k}f^{K}(-\underline{u})$ in $\Omega$. Thus, by comparison principle, we have $u_{n}\le \underline{u}$ for $n$ large enough. The closeness of $\mathcal{C}'$ is obvious by the closeness of $\mathcal{C}$ and the definition of $\mathcal{C}'$. So we only need to show that $\mathcal{C}'$ is open relative to $\mathcal{C}$. For any $(\lambda_{0},u_{0})\in \mathcal{C}'$, we have that $u_{0}<\underline{u}~in ~\Omega$. For if not, There exists $x_{0}\in \Omega$ such that $u_{0}(x_{0})=\underline{u}(x_{0})$, then we have $x_{0}$ is a locally minimum of $\underline{u}-u_{0}$, then $D_{ij}[\underline{u}-u_{0}](x_{0})$ is positive semi-definite. Since $u_{0},~\underline{u}\in \Phi^{k}(\Omega)$, by \cite{bhattacharya_maximum_2021}, we have
    \begin{center}
    	$S_{k}(D^{2}\underline{u}(x_{0}))\ge S_{k}(D^{2}u_{0}(x_{0}))$, i.e. $\underline{\lambda}^{k}f^{k}(-\underline{u}(x_{0}))\ge \lambda_{0}^{k}f^{k}(-u_{0}(x_{0}))$.
    \end{center}
But since $u_{0}(x_{0})=\underline{u}(x_{0})$ and $\underline{\lambda}<\lambda^{*}\le \lambda_{0}$, The conclusion is impossible. Therefore, $u_{0}<\underline{u}~in ~\Omega$.

For any $\varepsilon>0$, we choose an open neighborhood $B\subset \mathbb{R}\times X$ of $(\lambda_{0},u_{0})$ such that $\Vert u-u_{0}\Vert <\varepsilon$. For any $(\lambda,u)\in B\cap \mathcal{C}$, we have $u\le \underline{u} $ in $\Omega_{\varepsilon}=\left\{x\in \Omega:u_{0}(x)\le \overline{u}(x)-\varepsilon\right\}$. 

We claim that there exists $\varepsilon$ small enough such that $u\le \underline{u} $ in $\Omega':=\Omega\backslash\Omega_{\varepsilon}$.

In fact, let $L_{\omega}=\Sigma_{i,j=1}^{n}F_{ij}(\omega)D_{ij}$, then $L_{\omega}$ is elliptic linear operator as $\omega\in \Phi^{k}(\Omega)$. By \cite{caffarelli_dirichlet_1985},
    \begin{center}
    	$L_{\omega}(v-\omega)=\Sigma_{i,j=1}^{n}F_{ij}(\omega)D_{ij}(v-\omega)\ge F_{k}(v)-F_{k}(\omega)$
    \end{center}
for $\omega,~v\in \Phi^{k}(\Omega)$. Thus, by linearity, $L_{\omega}(\omega-v)\le F_{k}(\omega)-F_{k}(v)$. Let $\omega=\underline{u}$, $v=u$, $w(x)=\underline{u}-u(x)$ then we have
    \begin{center}
    	$\begin{cases}
    		L_{\underline{u}}(\underline{u}-u)+h(x)w(x)\le 0\qquad&in ~\Omega'\\
    		w(x)\ge 0\qquad&in ~\partial\Omega'
    	\end{cases}$
    \end{center}
where $h(x)=\frac{\lambda f(-u(x))-\underline{\lambda}f(-\underline{u}(x))}{\underline{u}-u(x)}$ is locally bounded. By the maximum principle on narrow domains (see \cite{han_elliptic_2011}), $w(x)\ge 0$ in $\Omega'$ for $\varepsilon$ small enough. So $\mathcal{C}'$ is open relative to $\mathcal{C}$. Therefore, we get $\mathcal{C}'=\mathcal{C}$.

So for all $u\in \mathcal{C}$, we have $u(x)\le  \underline{u}(x)$. But $(+\infty,0)\in \mathcal{C}$ and $\underline{u}\le 0$, so $\underline{u}\equiv0$ which is in contradiction with the choose of nontrivial solution $\underline{u}$. So when $\lambda<\lambda_{*}$, \cref{eq:SK} has no solution.

{\bf Step 2 }.For $\lambda>\lambda_{*}$, \cref{eq:SK} has at least two nontrivial admissible solutions.

Let $\widetilde{\lambda}=inf\left\{\lambda>0:\exists u_{1}\not\equiv u_{2}~s.t.~(\lambda,u_{1}),(\lambda,u_{2})\in \mathcal{C}\right\}$. Suppose on the contrary, that $\widetilde{\lambda}>\lambda_{*}$. Take $\lambda'\in \left(\lambda_{*}, \widetilde{\lambda}\right]$such that there exists a unique $u$ such that $(\lambda',u)\in \mathcal{C}$. Define
    \begin{center}
    	$\mathcal{C}''=\left\{(\lambda,u)\in \mathcal{C}:\lambda\ge \lambda'\right\}$.
    \end{center}
Clearly, $\mathcal{C}''$ is connected and closed. Then let $\overline{\mathcal{C}}=\left\{(\lambda,u)\in \mathcal{C}'':u\le  u''\right\}$, where $(\lambda'',u'')$ is a nontrivial solution of \cref{eq:SK} and $\lambda''<\lambda'$. So we can repeat the argument of Step 1 to show the \cref{eq:SK} have no solution for $\lambda<\lambda'$, which is a contradiction.
\end{prf2}
     
\begin{remark} 
\label{rmk1}
In fact, If we suppose that $f$ is coercive then for $\lambda>0$ small, we can always construct a sub-solution of \cref{eq:SK}. Notice that 0 is a super-solution of \cref{eq:SK}, if we want to prove that there exists $\lambda_{*}$ such that for all $\lambda<\lambda_{*}$, then \cref{eq:SK} has no nontrivial solution, which means we can only get trivial solution by super-sub solution method.
\end{remark}
    
For generally semi-linear elliptic, it is easy to get counterexample. For instance, consider the existence of positive solution of the following equation
    \begin{center}
    	$\begin{cases}
    		-\Delta u=\lambda f(u)\qquad&in\quad\Omega\\
    		u=0 \qquad&on\quad \partial\Omega
    	\end{cases}$
    \end{center}
where $\lambda>0$ is parameter and let $F(u)=\int_{0}^{u}f(s)~dx$. Because of $f_{0}=\underset{s\rightarrow0}{lim}\frac{f(s)}{s}=0$ and $f_{\infty}=\underset{s\rightarrow\infty}{lim}\frac{f(s)}{s}=0$,then there exists $M>0$ such that $ F(u) \le M u^{2}$ for all $u\ge 0$. Thus, there exists $C>0$ such that $0\le  \int_{\Omega}F(u)dx \le C\Vert u\Vert^{2}_{H^{1}_{0}(\Omega)}$. So we can study $J(u):=\int_{\Omega}F(u)dx$ by variational method with one constraint. Then let $\mathcal{S}=\left\{u\in H_{0}^{1}(\Omega):u\ge 0,~\Vert u\Vert_{H^{1}_{0}}(\Omega)\le1\right\}$ and consider $\underset{u\in \mathcal{S}}{sup} J(u)$, we will get a weak solution $(\lambda_{u},u)$ of above problem. Further more, by scaling if necessary, for some $f$ we can hold $\lambda$ small.
   
By \cref{rmk1} of semi-linear equation and \cref{thm:MA2} of Monge-Amp\`ere equation, we have the following conjecture.
\begin{conj}
    If $f_{0}=0$ and $f_{\infty}=0$, then there exists $k^{*}>1$ such that \cref{eq:SK} has no nontrivial solution for all $\lambda\in (0,\lambda_{*})$ and \cref{eq:SK} has at least two nontrivial solution for all $\lambda\in (\lambda_{*},+\infty)$ as $k\ge k^{*}$, where $\lambda_{*}>0$.
\end{conj}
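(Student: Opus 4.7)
The plan is to import the continuum-by-approximation architecture from the proof of \cref{thm:SK3}, then replace its coercivity-driven non-existence step by a quantitative a-priori estimate that activates once $k$ is large enough. Concretely, I would first approximate $f$ by
\begin{equation*}
f_n^k(s) = \begin{cases} (s/n)^k + g^n(s), & s \in [0, 1/n], \\ f^k(s), & s \in [2/n, +\infty), \end{cases}
\end{equation*}
with $g^n(s)/s^k \to 0$ as $s \to 0^+$, so that $f_{n,0} = 1/n$ and $f_{n,\infty} = 0$. Applying case~(2) of \cref{thm:SK1} (equivalently Theorem~1.1(a) of \cite{dai_global_2018}), one obtains a continuum $\mathcal{C}_n$ of $k$-admissible solutions emanating from $(n\lambda_1, 0)$ that is unbounded in the $\lambda$-direction. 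The compactness of $T_k$ from \cite{jacobsen_global_1999} together with Lemma~4.11 of \cite{dai_eigenvalue_2015} would then produce an unbounded component $\mathcal{C}$ of \cref{eq:SK} containing both $(+\infty, 0)$ and $(+\infty, \infty)$. Setting $\lambda_* := \inf\{\lambda : (\lambda, u) \in \mathcal{C}\}$, the multiplicity statement for $\lambda > \lambda_*$ and uniqueness at $\lambda = \lambda_*$ would follow verbatim from Step~2 of \cref{thm:SK3}, conditional on non-existence below $\lambda_*$.

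\textbf{The non-existence step.} The substantive obstacle is ruling out nontrivial admissible solutions for $\lambda \in (0, \lambda_*)$ without coercivity. In \cref{thm:SK3} this was achieved by taking $(\lambda_n, u_n) \in \mathcal{C}$ with $\lambda_n, \Vert u_n\Vert \to \infty$, using coercivity to make $\lambda_n^k f^k(-u_n) \geq \underline{\lambda}^k f^k(-\underline{u})$, and then sliding via the $\Phi^k$-comparison of \cite{bhattacharya_maximum_2021}. In its absence, the natural replacement is an intrinsic integral identity: test the equation against $-u$ and against $x \cdot Du$, then combine using the divergence structure of $S_k$ (as in \cite{tso_remarks_1990}) to produce a Pohozaev-type relation schematically of the form
\begin{equation*}
\Bigl(\tfrac{N}{k+1} - 1\Bigr) \int_\Omega (-u)\, S_k(D^2 u)\, dx + \mathrm{B.T.} = \lambda^k \int_\Omega G(-u)\, dx,
\end{equation*}
where $G$ is built from $f^k$ and $\mathrm{B.T.}$ denotes a boundary term controlled by the $(k-1)$-convexity of $\partial\Omega$. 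Coupling this with the Sobolev embedding for $k$-convex functions of \cite{wang_class_1994} (sharpest when $2k > N$, which suggests the natural threshold lies near $k^{*} = \lceil N/2 \rceil + 1$), one should be able to extract an a-priori lower bound $\lambda \geq c(k, \Omega, f) > 0$, where the $f$-dependence enters only through integral quantities of $F(s) = \int_0^s f(t)\,dt$ that are controllable under the two-sided vanishing $f_0 = f_\infty = 0$.

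\textbf{Main difficulty.} The hard part, and the reason this remains posed as a conjecture, is to make the Sobolev-Pohozaev estimate sharp enough to force $\lambda_* > 0$ while respecting the hypothesis $f_0 = f_\infty = 0$. The linear case $k = 1$ must fail, as \cref{rmk1} explicitly constructs nontrivial solutions for arbitrarily small $\lambda$ via a constrained variational argument, whereas $k = N$ succeeds in \cref{thm:MA2} thanks to the strength of the Alexandrov maximum principle and the convexity of admissible functions. Pinning down the optimal threshold $k^{*}$ between these endpoints appears to require either a refinement of $k$-Hessian capacity estimates in the spirit of \cite{trudinger_hessian_1997}, or a careful interpolation between the convexity-based argument available at $k = N$ and the bifurcation-based picture for smaller $k$; this is the genuine hurdle, and the step where any proof will stand or fall.
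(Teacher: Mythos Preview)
This statement is posed in the paper as a \emph{conjecture}; the authors do not prove it, but motivate it by the failure of the semilinear case in \cref{rmk1} and the success of the Monge--Amp\`ere case in \cref{thm:MA2}. There is therefore no proof in the paper to compare your attempt against. You recognize this, and your proposal is honestly framed as a plan with an acknowledged gap at the non-existence step.

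Your continuum-by-approximation setup is correct and follows the scaffolding of \cref{thm:SK3} exactly. However, there is a gap you do not flag: the claim that multiplicity for $\lambda > \lambda_{*}$ ``would follow verbatim from Step~2 of \cref{thm:SK3}, conditional on non-existence below $\lambda_{*}$'' is not right. In the paper, Step~2 does not consume non-existence as a black-box input; it \emph{reruns} the entire Step~1 machinery (the $\mathcal{C}' = \mathcal{C}$ connectedness argument) on a truncated continuum $\mathcal{C}''$, and that machinery requires coercivity of $f$ to seed $\mathcal{C}'$ as non-empty via the comparison $u_n \le \underline{u}$ along the $(+\infty,\infty)$ branch. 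In \cref{thm:MA2} this seed is furnished instead by full convexity of admissible functions at $k=N$. For intermediate $k$, without coercivity, neither seed is available, so Step~1 and Step~2 collapse simultaneously. Whatever replacement you devise for the non-existence step must therefore also drive the multiplicity contradiction; a Pohozaev--Sobolev bound of the form $\lambda \ge c(k,\Omega,f)$ does not obviously do this, because it yields a uniform lower bound on $\lambda$ for \emph{all} solutions rather than an ordering between an off-continuum solution and the continuum $\mathcal{C}$. The conjectural threshold $k^{*}$, if it exists, presumably enters through a partial-convexity or $\Phi^{k}$-ordering property strong enough to restore precisely that comparison---so the genuine hurdle sits in a different place than your proposal locates it.
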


\subsection{Monge-Amp\`ere equation}

We now discuss a special case when $k=N$, i.e. the so called Monge-Amp\`ere equation case. Due to the convexity of the solution in this case, we can have a better result without additional condition on $f$.

\begin{theorem}[\cref{thm:MA2}]
If $f_{0}=0$ and $f_{\infty}=0$, then there exists a positive constant $\lambda_{*}$ such that \cref{eq:MA} has at least two convex solutions for all $\lambda>\lambda_{*}$, exactly one convex solution for $\lambda=\lambda_{*}$ and no nontrivial solution for all $\lambda\in (0,\lambda_{*})$.   
\end{theorem}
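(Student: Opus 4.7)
My plan is to mirror the proof of Theorem \ref{thm:SK3} verbatim while replacing the two places where coercivity and local Lipschitz continuity of $f$ were invoked (the non-emptiness and openness of the auxiliary set $\mathcal{C}' = \{(\lambda,u)\in\mathcal{C}: u \le \underline{u}\}$) by the Monge-Amp\`ere comparison principle for convex functions, which requires no structural hypothesis on $f$. Every admissible solution of \cref{eq:MA} is automatically convex and vanishes on $\partial\Omega$, so this substitution is natural and purely qualitative.

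First I will take the truncation $f_n^{N}(s) = (s/n)^{N} + g^{n}(s)$ on $[0,1/n]$ and $f_n^{N}(s) = f^{N}(s)$ on $[2/n,+\infty)$, which gives $f_{n,0} = 1/n$ and $f_{n,\infty} = 0$. Theorem \ref{thm:SK1}(2) applied to the approximating problem produces an unbounded continuum $\mathcal{C}_n$ of convex solutions emanating from $(n\lambda_1,0)$, and the argument used for Theorem \ref{thm:SK3} shows that $\mathcal{C}_n$ in fact links $(n\lambda_1,0)$ to $(+\infty,\infty)$. Passing to the $\limsup$, together with the compactness of $T_N$ and the connectedness lemma (Lemma 2.5 of \cite{dai_two_2016}), yields an unbounded continuum $\mathcal{C}$ of convex solutions of \cref{eq:MA} containing both $(+\infty,0)$ and $(+\infty,\infty)$. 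Setting $\lambda_* = \inf_{(\lambda,u)\in\mathcal{C}}\lambda \in (0,+\infty)$, the compactness of $T_N$ delivers at least one convex solution at $\lambda_*$ and, by the topological shape of $\mathcal{C}$, at least two convex solutions for every $\lambda > \lambda_*$.

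To prove non-existence for $\lambda < \lambda_*$, I will assume for contradiction that some convex $\underline{u}\not\equiv 0$ solves \cref{eq:MA} at $\underline{\lambda} < \lambda_*$, define $\mathcal{C}' = \{(\lambda,u)\in\mathcal{C}: u \le \underline{u}\}$ and show $\mathcal{C}' = \mathcal{C}$. Closedness is immediate. Non-emptiness follows from the observation that along $(\lambda_n,u_n) \in \mathcal{C}$ with $\lambda_n,\|u_n\|_\infty \to +\infty$, the convexity of the $u_n$ together with the Monge-Amp\`ere comparison principle applied to two convex functions with identical zero boundary data forces $u_n \le \underline{u}$ for $n$ large, without requiring coercivity of $f$. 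Openness at $(\lambda_0, u_0)\in\mathcal{C}'$ is the major departure from Theorem \ref{thm:SK3}: I plan to invoke the strong Monge-Amp\`ere comparison principle (valid for convex functions with merely continuous right-hand side, since $\lambda_0 \ge \lambda_* > \underline{\lambda}$) to upgrade $u_0 \le \underline{u}$ to $u_0 < \underline{u}$ strictly in $\Omega$, split $\Omega$ into $\Omega_\varepsilon = \{u_0 \le \underline{u} - \varepsilon\}$ and the narrow shell $\Omega' = \Omega \setminus \Omega_\varepsilon$, and conclude $u \le \underline{u}$ in $\Omega'$ via Monge-Amp\`ere comparison for convex functions on a thin neighbourhood of $\partial\Omega$, dispensing entirely with the linearisation and hence with any Lipschitz hypothesis on $f$. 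Once $\mathcal{C}' = \mathcal{C}$ is established, $(+\infty,0)\in\mathcal{C}$ forces $\underline{u} \ge 0$, contradicting $\underline{u} \not\equiv 0$.

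The main obstacle is the openness step: substituting the Monge-Amp\`ere comparison principle for the narrow-domain maximum principle is not entirely mechanical, because the former needs pointwise ordering of the Monge-Amp\`ere measures on $\Omega'$, which was previously delivered by the Lipschitz linearisation trick. I plan to overcome this by choosing $\varepsilon$ so small that on $\Omega'$ both $u$ and $\underline{u}$ lie in a boundary regime where continuity of $f$ combined with the strict inequality $\lambda_0 \ge \lambda_* > \underline{\lambda}$ already yields $\lambda^{N} f^{N}(-u) \ge \underline{\lambda}^{N} f^{N}(-\underline{u})$ pointwise on $\Omega'$; convexity of both functions and the agreement of their boundary values on $\partial \Omega'$ then close the argument via classical Aleksandrov comparison. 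Finally, the multiplicity for $\lambda > \lambda_*$ and uniqueness at $\lambda_*$ follow by the mirror-image argument of Step 2 of Theorem \ref{thm:SK3}'s proof, with the same convexity-based comparison replacing the Lipschitz-based narrow-domain step.
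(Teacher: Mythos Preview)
Your overall architecture matches the paper's, but the crucial openness step contains a genuine gap. To run Aleksandrov comparison on the shell $\Omega'$ you need the pointwise inequality
\[
\lambda^{N} f^{N}(-u(x)) \;\ge\; \underline{\lambda}^{N} f^{N}(-\underline{u}(x)),\qquad x\in\Omega',
\]
\emph{before} you know $u\le\underline{u}$ there. You propose to extract this from ``continuity of $f$ together with $\lambda>\underline{\lambda}$'', but continuity only controls $|f(-u)-f(-\underline{u})|$, not the ratio $f(-u)/f(-\underline{u})$. On $\Omega'$ both $-u$ and $-\underline{u}$ tend to $0$ (the shell hugs $\partial\Omega$), so $f(-\underline{u})$ is small and the ratio is uncontrolled; a continuous $f$ with $f_{0}=0$ may oscillate near $0$ so that $f(-u)/f(-\underline{u})$ drops well below $\underline{\lambda}/\lambda$ even when $|(-u)-(-\underline{u})|<2\varepsilon$. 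Thus the measure ordering fails and the comparison principle does not apply. This is precisely the obstacle that the Lipschitz linearisation handled in Theorem~\ref{thm:SK3}, and dropping Lipschitz continuity without a replacement leaves the argument circular.

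The paper avoids the shell comparison entirely. After proving (as you do) that every $(\lambda_{0},u_{0})\in\mathcal{C}'$ satisfies $u_{0}<\underline{u}$ strictly in $\Omega$, it does \emph{not} try to show $\mathcal{C}'$ is open. Instead it argues that $\mathcal{C}\setminus\mathcal{C}'$ is closed: if $(\lambda_{n},u_{n})\in\mathcal{C}\setminus\mathcal{C}'$ converged to some $(\lambda',u')\in\mathcal{C}'$, the crossing points $x_{n}$ with $u_{n}(x_{n})>\underline{u}(x_{n})$ would be forced to $\partial\Omega$, and a chord argument using the convexity of $u_{n}$ (pick $\hat{x}$ with $u_{n}(\hat{x})<\underline{u}(\hat{x})$, locate the point $\bar{x}$ on $[\hat{x},x_{n}]$ where the secant of $u_{n}$ meets $\underline{u}$, and compare with $u_{n}(\bar{x})$) produces a point where $u_{n}$ lies strictly above its own chord, contradicting convexity. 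Since $\mathcal{C}$ is connected and $\mathcal{C}'$ is nonempty closed with closed complement, $\mathcal{C}'=\mathcal{C}$. This geometric use of convexity is what replaces the Lipschitz/narrow-domain machinery and is the idea missing from your proposal.
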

    
\begin{proof} 
Let 
    \begin{center}
    $f^{n}(s)=	
    \begin{cases}
    	\frac{1}{n}	s+g^{n}(s) \qquad&s\in [0,\frac{1}{n}]\\
    	f(s)\qquad&s\in \left[\frac{2}{n},+\infty\right)
    \end{cases}$
    \end{center}
where $\underset{n\rightarrow0}{lim}\frac{g^{n}(s)}{s}=0$. We have $\underset{n\rightarrow+\infty}{lim}f^{n}(s)=f(s)$ and $f^{n}_{0}=\frac{1}{n}$, $f^{n}_{0}=0$. By Theorem 1.2(2) in \cite{luo_global_2020} we get that continuum  $\mathcal{C}_{n}$ joint $(n\lambda_{1},0)$ to $(+\infty,+\infty)$. Now, taking $Z^{1}_{n}=(n\lambda_{1},0)$, $Z^{2}_{n}=(+\infty,+\infty)$. The compactness of $T_{f}$ (see \cite{jacobsen_global_1999}) implies that $(\bigcup_{n=1}^{+\infty} \mathcal{C}_{n})\cap B_{R}$ is precompact, where $B_{R}=\left\{Z\in \mathbb{R}\times X:\Vert Z\Vert < R\right\}$. So by Lemma 4.11 in \cite{dai_eigenvalue_2015} there exists unbounded component $\mathcal{C}$ such that $(+\infty,0)\in \mathcal{C}$ and $(+\infty,\infty)\in  \mathcal{C}$.

Let $\lambda_{*}=\underset{(\lambda,u)\in  \mathcal{C}}{inf}\left\{\lambda\right\}$. Obviously, there exists at least one convex solution at $\lambda=\lambda_{*}$. We divide the rest of the proof into two steps.

{\bf Step 1}. When $\lambda<\lambda_{*}$ \cref{eq:SK} has no solution.

Assume there exists a solution $\underline{u}$ of \cref{eq:SK} for some $\underline{\lambda}<\lambda_{*}$. By Lemma 6.1 in \cite{chou_variational_2001} We have $\Vert\underline{u}\Vert_{0}\le C$ for some positive constant C. Let $\mathcal{C}'=\left\{(\lambda,u)\in \mathcal{C}:u\le \underline{u}\right\}$.

We $claim~\mathcal{C}'=\mathcal{C}$.
    
It is sufficient to show that $\mathcal{C}'$ is not empty, open and closed relative to $\mathcal{C}$. It is clearly that $\mathcal{C}'$ is nonempty since $(+\infty,-\infty)\in \mathcal{C}'$ and the solutions are convex. The closeness of $\mathcal{C}'$ is obvious by the closeness of $\mathcal{C}$ and the definition of $\mathcal{C}'$. So we only need to show that $\mathcal{C}'$ is open relative to $\mathcal{C}$. For any $(\lambda_{0},u_{0})\in \mathcal{C}'$, we have that $u_{0}<\underline{u}~in ~\Omega$. For if not, There exists $x_{0}\in \Omega$ such that $u_{0}(x_{0})=\underline{u}(x_{0})$,then we have $x_{0}$ is a locally minimum of $\underline{u}-u_{0}$, so $det~(D^{2}(\underline{u}-u_{0})(x_0))\ge 0$. Then, $det(D^{2}\underline{u}(x_{0}))-det(D^{2}u_{0}(x_{0}))\ge det(D^{2}(\underline{u}-u_{0}))\ge 0$ i.e. $\underline{\lambda}^{N}f^{N}(-\underline{u}(x_{0}))\ge \lambda_{0}^{N}f^{N}(-u_{0}(x_{0}))$. That is impossible since $\underline{\lambda}<\lambda_{*}\le \lambda_{0}$ and $u_{0}(x_{0})=\underline{u}(x_{0})$. 

By the definition of $\mathcal{C}'$, $\mathcal{C}\backslash\mathcal{C}'=\left\{u\in\mathcal{C}:\exists x\in \Omega ~s.t.~u(x)<\overline{u}\right\}$. If $\mathcal{C}\backslash\mathcal{C}'=\emptyset$,then our claim follows. If $\mathcal{C}\backslash\mathcal{C}'\ne \emptyset$, but if we can prove that $\mathcal{C}\backslash\mathcal{C}'$ is closed, it means that $\mathcal{C}$ is the union of two nonempty closed sets which is in contradiction with the connection of $\mathcal{C}$. So we have $\mathcal{C}\backslash\mathcal{C}'=\emptyset$, thus our claim follows.

To prove that $\mathcal{C}\backslash\mathcal{C}'$is a closed set. Argument on the contrary, assume that $\mathcal{C}\backslash\mathcal{C}'$is not closed, then there exists a sequence $\left\{u_{n}\right\}\subset\mathcal{C}\backslash\mathcal{C}'$, $x_{n}\in \Omega$ and $u'\in \mathcal{C}'$ such that $u_{n}\rightarrow u'$ as $n\rightarrow+\infty$ and $u_{n}(x_{n})>\underline{u}(x_{n})>u'(x_{n})$ for any $n\in \mathbb{N}$. It is obvious that $x_{n}\rightarrow x_{*}$ as $n\rightarrow+\infty$, where $x_{*}\in \partial\Omega$. But it is in contradiction with the convexity.

In fact, for $x_{n}$ which satisfies $u_{n}(x_{n})>\underline{u}(x_{n})$, by the convexity, there exists $\widetilde{x}\in \Omega$ which is in the neighborhood of $x_{n}$ such that $u_{n}(\widetilde{x})=\underline{u}(\widetilde{x})$. Since $x_{n}\rightarrow x_{*}$ as $n\rightarrow+\infty$, where $x_{*}\in \partial\Omega$. Then for $n$ large enough, without loss of generality, we can assume that $\widetilde{x}$ is the interior point which is closest to $\partial\Omega$ such that $u_{n}(\widetilde{x})=\underline{u}(\widetilde{x})$. By convexity, $u_{n}-\underline{u}$ not change sign in the convex combination of $\widetilde{x}$ and boundary point. Furthermore, there exists $\hat{x}\in \Omega$ which is in the neighborhood of $\widetilde{x}$ such that $u_{n}(\hat{x})<\underline{u}(\hat{x})$. Let $h(t)=tu_{n}(\hat{x})+(1-t)u_{n}(x_{n})$, where $t\in (0,1)$. Then ,there exists $\overline{x}=t_{1}\hat{x}+(1-t_{1})x_{n}$  such that $h(t_{1})=\underline{u}(\overline{x})$. We can always choose suitable $\hat{x}$and $x_{n}$ such that $u_{n}(\overline{x})\ge \underline{u}(\overline{x})$ for $n$ large enough. Thus, by the definition of $\widetilde{x}$, we have $u_{n}(\overline{x})> \underline{u}(\overline{x})$, i.e., $u_{n}(t_{1}\hat{x}+(1-t_{1})x_{n})>t_{1}u_{n}(\hat{x})+(1-t_{1})u_{n}(x_{n})$, which is in contradiction with the convexity of $u_{n}$.

Hence, $\mathcal{C}\backslash\mathcal{C}'$ is a closed set which means that $\mathcal{C}'=\mathcal{C}$.

So for all $u\in \mathcal{C}$, we have $u(x)\le  \underline{u}(x)$. But $(+\infty,0)\in \mathcal{C}$ and $\underline{u}\le 0$, so $\underline{u}\equiv0$ which is in contradiction with the choose of nontrivial solution $\underline{u}$. So when $\lambda<\lambda_{*}$, \cref{eq:MA} has no solution.
    
{\bf Step 2}. For $\lambda>\lambda_{*}$, \cref{eq:MA} has at least two convex solutions.

Let $\widetilde{\lambda}=inf\left\{\lambda>0:\exists u_{1}\not\equiv u_{2}~s.t.~(\lambda,u_{1}),(\lambda,u_{2})\in \mathcal{C}\right\}$. Suppose on the contrary, that $\widetilde{\lambda}>\lambda_{*}$. Take $\lambda'\in \left(\lambda_{*}, \widetilde{\lambda}\right]$ such that there exists a unique $u$ such that $(\lambda',u)\in \mathcal{C}$. Define
    \begin{center}
    	$\mathcal{C}''=\left\{(\lambda,u)\in \mathcal{C}:\lambda\ge \lambda'\right\}$.
    \end{center}
Clearly, $\mathcal{C}''$ is connected and closed. Then let $\overline{\mathcal{C}}=\left\{(\lambda,u)\in \mathcal{C}'':u\le  u''\right\}$, where $(\lambda'',u'')$ is a nontrivial solution of (1.1) and $\lambda''<\lambda'$. So we can repeat the argument of Step 1 to show the \cref{eq:MA} have no solution for $\lambda<\lambda'$, which is a contradiction.
\end{proof}

\section{Hessian Systems}
\label{sec:HS}

Next, we are going to consider the system case, we take 2-coupled case as example for convenient, all the following results are still holds to m-coupled $(m\geq2)$ case respectively.

We study the coupled $k$-Hessian systems
    \begin{equation}\label{eq:CSK1}
    	\begin{cases}
    		(S_{k}(D^{2}u))^{1/k}=\lambda g(-u,-v) \qquad& in\quad\Omega \\  
    		(S_{k}(D^{2}v))^{1/k}=\lambda h(-u,-v)\qquad& in\quad\Omega \\  
    		u=v=0                 \qquad& on\quad \partial\Omega
    	\end{cases}
    \end{equation}
where $\Omega$ is a $(k$-$1)$-convex domain in $\mathbb{R}^{N}$ and $\lambda\ge 0$.

\subsection{Eigenvalue Problems of Hessian systems}
In order to study \cref{eq:CSK1} by bifurcation we need the following eigenvalue problem.
    \begin{equation}\label{eq:CSK2}
    	\begin{cases}
    		S_{k}(D^{2}u)=\lambda(-v)^{\alpha} \qquad& in\quad\Omega \\  
    		S_{k}(D^{2}v)=\mu(-u)^{\beta}\qquad& in\quad\Omega \\  
    		u=v=0                 \qquad& on\quad \partial\Omega
    	\end{cases}
    \end{equation}

We firstly consider the eigenvalue problems of Hessian systems. Inspired by \cite{zhang_power-type_2015}, we have the following conclusion.
\begin{prop} 
\label{pro:SKEP} 
Suppose $\Omega\subset\mathbb{R}^{N}$ is a bounded $(k$-$1)$-convex domain. If $\alpha>0$ and $\beta>0$, $\alpha\beta=k^{2}$, the \cref{eq:CSK2} admits a nontrivial $k$-convex solution if and only if $\lambda\mu ^{\frac{\alpha}{k}}=C$, where $C=C(k,\alpha,\Omega)$.
\end{prop}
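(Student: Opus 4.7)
The equality $\alpha\beta = k^{2}$ is precisely the scaling threshold that collapses the coupled system into a single positively $1$-homogeneous eigenvalue problem on the cone $\Phi_{0}^{k}(\Omega)$ of admissible functions, making a Krein–Rutman–type argument applicable. Because $S_{k}$ is $k$-homogeneous in the Hessian, the solution operator from \cref{sec:pre} satisfies $T_{k}(cg) = c^{1/k}T_{k}(g)$ for every $c>0$ and nonnegative $g$. I would first rewrite \cref{eq:CSK2} as $u = \lambda^{1/k}T_{k}((-v)^{\alpha})$ and $v = \mu^{1/k}T_{k}((-u)^{\beta})$, then eliminate $v$ to obtain the scalar fixed-point equation
\[
-u \;=\; \lambda^{1/k}\mu^{\alpha/k^{2}}\,\Psi(u), \qquad
\Psi(u) \;:=\; -T_{k}\!\left(\bigl(-T_{k}((-u)^{\beta})\bigr)^{\alpha}\right).
\]
A short computation tracking homogeneities through the two copies of $T_{k}$ and the two power maps gives $\Psi(cu) = c^{\alpha\beta/k^{2}}\Psi(u)$ for $c>0$, and the hypothesis $\alpha\beta = k^{2}$ is exactly what makes this exponent equal to $1$. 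Hence $\Psi$ is positively $1$-homogeneous on the admissible cone.

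\textbf{Krein–Rutman and conclusion.} Complete continuity of $T_{k}$ (recorded in \cref{sec:pre}) passes to $\Psi$ by continuity of the intermediate power maps; $T_{k}$ is order-reversing on nonnegative right-hand sides, so the twofold composition $\Psi$ is order-preserving, and the strong maximum principle for $k$-Hessian equations from \cite{caffarelli_dirichlet_1985,trudinger_weak_1997} ensures that $\Psi$ maps the nontrivial part of the cone into its interior. Applying the nonlinear Krein–Rutman theorem to this positively $1$-homogeneous, compact, strongly positive operator yields a unique positive principal eigenvalue $\kappa = \kappa(k,\alpha,\Omega) > 0$ together with a (unique up to positive scaling) positive eigenfunction $\hat{u}$. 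Tracing back through the reduction, \cref{eq:CSK2} admits a nontrivial admissible solution if and only if $\lambda^{1/k}\mu^{\alpha/k^{2}} = 1/\kappa$, equivalently
\[
\lambda\,\mu^{\alpha/k} \;=\; \kappa^{-k} \;=:\; C(k,\alpha,\Omega).
\]
The converse direction is immediate: given any $(\lambda,\mu)$ with $\lambda\mu^{\alpha/k}=C$, take $u$ to be a suitable scalar multiple of $\hat{u}$ (the $1$-homogeneity of $\Psi$ absorbs the scaling freedom) and recover $v = \mu^{1/k}T_{k}((-u)^{\beta})$.

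\textbf{Main obstacle.} The delicate step is the Krein–Rutman application, specifically verifying strong positivity of $\Psi$ on $\Phi_{0}^{k}(\Omega)\setminus\{0\}$ so that the principal eigenfunction sits in the interior of the cone and the positive eigenvalue is genuinely unique on the admissible cone. Complete continuity is inherited for free from the Preliminaries and the $1$-homogeneity is a one-line computation, so everything ultimately rests on the Hopf-type boundary behavior and interior strong maximum principle for the $k$-Hessian operator. A convenient shortcut, if needed, is to follow the Monge–Amp\`ere scheme of \cite{zhang_power-type_2015} and adapt their eigenvalue characterization to the $k$-Hessian setting by systematically replacing $\det D^{2}$ with $S_{k}(D^{2}\cdot)$.
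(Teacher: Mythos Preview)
Your proposal is correct and matches the paper's argument almost exactly. The paper defines $A_{1}(u)=T_{k}(|u|^{\alpha})$, $A_{2}(u)=T_{k}(|u|^{\beta})$ and works with the composite $A=A_{1}A_{2}$, verifies the same $1$-homogeneity via $\alpha\beta=k^{2}$, and then invokes \cref{lem:cco} (an abstract positive-eigenvalue lemma from \cite{jacobsen_global_1999} playing the role of your nonlinear Krein--Rutman theorem) before unwinding the scaling to reach $\lambda\mu^{\alpha/k}=\lambda_{1}^{k}$; your suggested shortcut through \cite{zhang_power-type_2015} is precisely the template the paper follows.
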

    
To prove \cref{pro:SKEP}, we need the following lemma which we can find in\cite{jacobsen_global_1999}.
    
\begin{lemma} \label{lem:cco}
Let E be real Banach space which contains a cone M. Let $A:E\rightarrow E$ be a completely continuous operator with $A|_{M}:M\rightarrow M$ homogeneous, monotonous and strong. Furthermore, assume that there exist nonzero elements $\omega$, $A(\omega)\in Im(A)\cap M$. Then there exists a constant $\lambda_{1}>0$ with the following properties:
\begin{enumerate}
    \item There exists $u\in M\backslash\left\{0\right\}$, with $u=\lambda_{1}A(u)$,
    \item If $v\in M\backslash\left\{0\right\}$ and $\lambda>0$ such that $v=\lambda A(v)$, then $\lambda=\lambda_{1}$.
\end{enumerate}
\end{lemma}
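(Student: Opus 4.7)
The plan is to prove this abstract Krein--Rutman type result on a cone by combining a Schauder--style fixed-point argument for existence with a sliding comparison for uniqueness. I interpret \emph{monotonous} as order-preserving ($u \le v$ in $M$ implies $A(u) \le A(v)$), \emph{strong} as $u_0$-positivity in the Krasnoselskii sense (any nonzero $u \in M$ satisfies $c(u)\, u_0 \le A(u) \le C(u)\, u_0$ for a fixed reference $u_0 \in M \setminus \{0\}$ and positive constants), and \emph{homogeneous} as positively $1$-homogeneous, $A(tu) = tA(u)$ for $t \ge 0$.

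For the existence part, I would first use the hypothesis that some nonzero $\omega$ has $A(\omega) \in \operatorname{Im}(A) \cap M$ together with strong positivity to produce a nontrivial reference $u_0 \in M$ such that every $A(u)$ on a suitable slice of $M$ is order-comparable to $u_0$. The $1$-homogeneity permits normalization, so the problem reduces to finding a fixed point of the projectively rescaled map $F(u) := A(u)/\|A(u)\|$ on a closed, bounded convex subset $K \subset M$ bounded away from $0$, obtained by intersecting $M$ with an order-interval around $u_0$. Complete continuity of $A$ transfers to $F$ on $K$, and strong positivity ensures the invariance $F(K) \subset K$. Schauder's fixed-point theorem then yields $u \in K$ with $F(u) = u$; unwinding gives $u = \lambda_1 A(u)$ with $\lambda_1 := 1/\|A(u)\| > 0$, proving item~(1).

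For uniqueness in item~(2), suppose $v = \lambda A(v)$ with $v \in M \setminus \{0\}$ and $\lambda > 0$. Strong positivity places both $u$ and $v$ in the order-interior of $M$, so they are comparable; set $\tau_* := \sup\{\tau \ge 0 : \tau u \le v\}$ and $\tau^* := \inf\{\tau \ge 0 : v \le \tau u\}$, both finite and positive. Applying $A$ to $\tau_* u \le v$, using monotonicity and $1$-homogeneity, and substituting the eigen-equations $u = \lambda_1 A(u)$ and $v = \lambda A(v)$ gives the improved bound $(\tau_*\lambda/\lambda_1)\, u \le v$; maximality of $\tau_*$ forces $\lambda \le \lambda_1$. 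The symmetric argument applied to $v \le \tau^* u$ together with minimality of $\tau^*$ forces $\lambda \ge \lambda_1$, hence $\lambda = \lambda_1$.

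The main obstacle will be constructing the invariant set $K$ for the Schauder step, specifically securing a uniform lower bound of the form $\|A(u)\| \ge c\|u\|$ on $K$ so that $F$ is well-defined and continuous. The hypothesis that $\omega$ and $A(\omega)$ are nontrivial elements of $\operatorname{Im}(A) \cap M$, combined with strong positivity, is precisely what supplies this lower bound via the $u_0$-positivity inequalities; making this quantitative, rather than appealing to a black--box cone fixed-point theorem, will be the technical heart of the argument, after which both the existence step and the sliding-comparison step for uniqueness proceed along classical Krein--Rutman lines.
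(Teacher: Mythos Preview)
The paper does not prove this lemma; it is quoted from \cite{jacobsen_global_1999} and used as a black box in the proof of \cref{pro:SKEP}. So there is no ``paper's own proof'' to compare against.

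That said, your sketch follows the classical Krasnoselskii--Krein--Rutman line and is essentially correct. The uniqueness argument via the sliding constant $\tau_* = \sup\{\tau \ge 0 : \tau u \le v\}$ is exactly the standard one: applying $A$, using monotonicity and $1$-homogeneity, and substituting the eigen-relations yields $(\tau_* \lambda/\lambda_1)u \le v$, whence $\lambda \le \lambda_1$ by maximality; the reverse inequality is symmetric. Strong positivity (in the $u_0$-positive sense you adopt) is precisely what guarantees $0 < \tau_* \le \tau^* < \infty$.

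For the existence step, the Schauder approach works, but the invariant convex set is more cleanly taken as $K = \{u \in M : \phi(u) = 1\}$ for a strictly positive continuous linear functional $\phi$ on $M$ (obtained by separation, since $M$ has nonempty quasi-interior under the strong-positivity hypothesis). This set is automatically convex and closed, and the renormalized map $u \mapsto A(u)/\phi(A(u))$ preserves it once you have the uniform lower bound $\phi(A(u)) \ge c > 0$ on $K$ coming from $u_0$-positivity. This sidesteps the convexity worry you raise about norm-spheres or order intervals. With that adjustment your plan is the standard proof.
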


\begin{lemma}\label{lem:SKLp}
Let $\Omega$ be a $(k$-$1)$-convex domain, then the Dirichlet problem
    \begin{center}
    	$\begin{cases}
    		S_{k}(D^{2}u)=\psi(x)\qquad&in\quad\Omega \\
    		u=\phi  \qquad&on\quad\partial\Omega
    	\end{cases}$
    \end{center}
has a unique $k$-convex solution $u\in C(\overline{\Omega})$ for any $\psi \in L^{p}(\Omega)$ with $p\ge \frac{N}{2k}$, and $\phi\in C(\overline{\Omega})$.
\end{lemma}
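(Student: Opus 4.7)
The plan is to establish existence by approximation and appeal to Trudinger's $L^p$ theory for Hessian equations, and uniqueness by the comparison principle.

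First I would smooth the data: mollify $\psi \in L^p(\Omega)$ to obtain nonnegative $\psi_m \in C^\infty(\overline{\Omega})$ with $\psi_m \to \psi$ in $L^p(\Omega)$, and extend/mollify $\phi$ to smooth boundary data $\phi_m$ converging uniformly to $\phi$ on $\partial\Omega$. Because $\Omega$ is $(k{-}1)$-convex, the classical existence theorem of Caffarelli-Nirenberg-Spruck (as extended by Trudinger-Urbas) yields a unique classical $k$-convex solution $u_m \in C^2(\Omega)\cap C(\overline{\Omega})$ of
\begin{equation*}
S_k(D^2 u_m) = \psi_m \quad \text{in } \Omega, \qquad u_m = \phi_m \quad \text{on } \partial\Omega.
\end{equation*}

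Next I would apply Trudinger's $L^\infty$ bound for weak solutions of Hessian equations (which is exactly where the threshold $p\geq N/(2k)$ enters): there is a constant $C = C(N,k,p,\Omega)$ such that
\begin{equation*}
\|u_m\|_{L^\infty(\Omega)} \leq \|\phi_m\|_{L^\infty(\partial\Omega)} + C \,\|\psi_m\|_{L^p(\Omega)}^{1/k}.
\end{equation*}
Combined with the uniform convergence of $\phi_m$ and the $L^p$-convergence of $\psi_m$, this gives a uniform bound on $\|u_m\|_{L^\infty}$. One then uses the modulus-of-continuity estimate for $k$-convex functions with uniformly bounded sup-norm (a consequence of the nontrivial geometric content of $k$-convexity together with boundary regularity from the $(k{-}1)$-convexity of $\partial\Omega$ giving a uniform barrier) to extract, via Arzelà-Ascoli, a subsequence $u_{m_j} \to u$ uniformly on $\overline{\Omega}$.

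The limit $u$ lies in $C(\overline{\Omega})$, equals $\phi$ on $\partial\Omega$, and by the Remark in Section 2 is $k$-convex on $\Omega$ (uniform limit of smooth $k$-convex approximants). To see that $u$ solves the equation I would invoke the stability of the Hessian measure $S_k(D^2 \cdot)$ under uniform convergence of $k$-convex functions (Trudinger-Wang); since $\psi_m \to \psi$ in $L^p$ and hence as measures, the limit $u$ is a continuous $k$-convex solution of $S_k(D^2 u) = \psi$ in the weak sense. Uniqueness is then a direct application of the comparison principle for the $k$-Hessian operator on $(k{-}1)$-convex domains: if $u_1, u_2$ are two such solutions with the same boundary data and the same right-hand side, then $u_1 \equiv u_2$.

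The main obstacle is the $L^\infty$ estimate in terms of $\|\psi\|_{L^p}$ at the critical exponent $p = N/(2k)$; this is not elementary and relies on Trudinger's sharp integral estimates for $k$-convex functions (an anisotropic Alexandrov-Bakelman-Pucci type inequality adapted to $S_k$). Everything else (smooth approximation, compactness, stability, comparison) is routine once that bound is in hand, so I would cite Trudinger's weak-solution paper for this quantitative step rather than reproduce it.
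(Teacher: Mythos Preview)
The paper does not actually prove this lemma; it is stated without proof as a known result from the literature (the same existence theory from \cite{trudinger_weak_1997} that the paper cites earlier to ensure the solution operator $T_k$ is well-defined). Your sketch---smooth approximation, classical CNS existence on $(k{-}1)$-convex domains, Trudinger's sharp $L^\infty$ bound in terms of $\|\psi\|_{L^p}$ at the threshold $p\ge N/(2k)$, compactness, weak continuity of the Hessian measure, and comparison for uniqueness---is the standard route to this result and is correct in outline, so there is nothing to compare against the paper's own argument.
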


\begin{lemma}\label{lem:SKcp}
Let $u,v\in \Phi^{k}(\Omega)\cap C(\overline{\Omega})$ satisfy
    \begin{center}
    	$\begin{cases}
    		\mu_{k}[u]\le \mu_{k}[v]\qquad&in\quad\Omega \\
    		u\le v \qquad&on\quad\partial\Omega
    	\end{cases}$
    \end{center}
on the $(k$-$1)$-convex domain $\Omega$. Then $u\le v$ in $\Omega$.
\end{lemma}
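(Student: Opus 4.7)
The plan is to follow the standard Trudinger--Wang comparison strategy: reduce to a smooth, strictly elliptic setting by a quadratic perturbation, dispose of the reduced problem with a one-line interior-extremum argument, and then extend to the merely continuous case by approximation. I would argue by contradiction: if the conclusion fails, continuity on $\overline{\Omega}$ together with the boundary hypothesis forces the relevant extremum of $u-v$ to be attained at some interior $x_0 \in \Omega$.

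In the smooth, strictly elliptic case (say $u,v \in C^2(\Omega)\cap C(\overline{\Omega})$ with a strict pointwise inequality between $S_k(D^2u)$ and $S_k(D^2v)$) the argument is a one-line Hessian comparison at $x_0$. The second-order extremum condition orders the two Hessians as symmetric matrices. Since $u,v \in \Phi^k$, the eigenvalues of $D^2u(x_0)$ and $D^2v(x_0)$ both lie in $\overline{\Gamma_k^N}$, and the classical monotonicity of $S_k$ on the G{\aa}rding cone transfers the matrix ordering into a pointwise ordering of $S_k(D^2u)(x_0)$ and $S_k(D^2v)(x_0)$, contradicting the strict hypothesis.

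To remove the strictness in the smooth case I would add a quadratic perturbation of the form $\tfrac{\varepsilon}{2}(|x|^2-R^2)$ with $R^2 > \sup_{\overline{\Omega}}|x|^2$. This adds $\varepsilon I$ to the Hessian, which strictly shifts $S_k$ in the desired direction, preserves membership in $\Phi^k$, and alters the boundary values in a controlled, one-sided way. The strict-inequality argument then applies to the perturbed pair, and letting $\varepsilon \to 0$ recovers the smooth, non-strict version of the lemma.

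The extension from the smooth case to general $u,v \in \Phi^k(\Omega)\cap C(\overline{\Omega})$ is the step where I expect the main obstacle. The Trudinger--Wang theory of Hessian measures supplies the needed approximation: every continuous $k$-convex function is, on compact subdomains, a uniform limit of smooth $k$-convex functions whose $k$-Hessian measures converge weakly. Approximating both $u$ and $v$ this way, transferring the boundary inequality via uniform convergence and the interior inequality via weak convergence of measures, and applying the smooth comparison to the approximants yields the result in the limit. The technical core is to choose smooth approximants whose Hessian measures respect the prescribed inequality up to arbitrarily small slack, so that the smooth strict comparison can be applied at each stage of the approximation; once this is in hand, the earlier two steps collapse to routine G{\aa}rding-cone monotonicity and a harmless quadratic perturbation.
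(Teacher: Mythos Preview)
The paper does not prove this lemma; it is simply quoted as a known result (the comparison principle for Hessian measures, due to Trudinger--Wang) and then used. Your plan is exactly the standard Trudinger--Wang argument, so in spirit it coincides with what the paper is invoking.

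There is, however, a direction issue you should notice. As printed, the hypothesis reads $\mu_k[u]\le\mu_k[v]$, but the standard comparison principle needs $\mu_k[u]\ge\mu_k[v]$ to conclude $u\le v$. Your own argument exposes this: if $u>v$ somewhere, then at an interior maximum $x_0$ of $u-v$ one has $D^2(u-v)(x_0)\le 0$, hence $D^2u(x_0)\le D^2v(x_0)$, and monotonicity of $S_k$ on $\overline{\Gamma_k^N}$ gives $S_k(D^2u)(x_0)\le S_k(D^2v)(x_0)$. This is \emph{consistent} with the stated hypothesis $\mu_k[u]\le\mu_k[v]$ and contradicts nothing; it only contradicts the reversed (and correct) hypothesis $\mu_k[u]\ge\mu_k[v]$, after your quadratic perturbation makes it strict. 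So your scheme in fact proves the correctly oriented comparison principle, and the inequality in the paper's statement appears to be a typo.

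Apart from that orientation point, your three-step outline (strict smooth case via an interior-extremum Hessian comparison, removal of strictness by the $\tfrac{\varepsilon}{2}(|x|^2-R^2)$ perturbation, and passage to $\Phi^k\cap C(\overline{\Omega})$ by the Trudinger--Wang approximation and weak convergence of Hessian measures) is correct and is precisely the argument in the references the paper cites.
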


\begin{proof}
Define $A_{1}:X\rightarrow X$ with $A_{1}(u)=v$, where $v$ is the unique $k$-admissible solution of the equation
    \begin{equation}\label{eq:SKalpha}
    	\begin{cases}
    		S_{k}(D^{2}v)=\vert u\vert ^{\alpha}\qquad&in\quad\Omega\\
    		v=0  \qquad&on\quad\partial\Omega
    	\end{cases}
    \end{equation}
By \cref{lem:SKLp}, $A_{1}$ is well-defined. Similarly, we define $A_{2}:X\rightarrow X$ with $A_{2}(u)=v$, where $v$ is the unique $k$-admissible solution of the equation
    \begin{equation}\label{eq:SKbeta}
    	\begin{cases}
    		S_{k}(D^{2}v)=\vert u\vert ^{\beta}\qquad&in\quad\Omega\\
    		v=0  \qquad&on\quad\partial\Omega
    	\end{cases}
    \end{equation}
By \cref{lem:SKLp}, for the admissible solution of \cref{eq:SKalpha,eq:SKbeta} we can see that $A_{1}u\in C^{1}(\overline{\Omega})$, $A_{2}u\in C^{1}(\overline{\Omega})$. Furthermore, we define a composite operator $A=A_{1}A_{2}$. We need to verify $A$ satisfies the assumptions of \cref{lem:cco}. Firstly, $A_{1}$, $A_{2}$ (thus $A$) are completely continuous by \cite{jacobsen_global_1999}. $A$ is positive since $A(X)\subset P$. Then let $t>0$, we have that
    \begin{center}
    	$A_{1}(tu)=t^{\alpha/k}A_{1}u,\quad A_{2}(tu)=t^{\beta/k}A_{2}u$.
    \end{center}
    As $\alpha\beta=k^{2}$, we deduce 
    \begin{center}
    	$A(tu)=A_{1}A_{2}(tu)=A_{1}(t^{\beta/k}A_{2}u)=tA_{1}A_{2}(u)=tA(u)$,
    \end{center}
which implies that $A$ is homogeneous. Besides, it is easy to get that $A_{1},~A_{2}$ are monotone operator by \cref{lem:SKcp}, so is $A$. To see $A$ is strong, notice that if $u\in Im(A)\cap M\backslash\left\{0\right\}$, then there exists a $v\in X\backslash\left\{0\right\}$ such that $u=A_{1}(A_{2}v)$. Indeed, $A_{2}v$ is a nonzero $k$-convex function that is strictly negative in $\Omega$. By \cref{lem:SKLp}, we see that $A_{2}v\in C^{1}(\overline{\Omega}), ~A_{1}(A_{2}v)\in  C^{1}(\overline{\Omega})$. It follows that $u$ is $k$-convex thus sub-harmonic, so $u(x)<0$ for $x\in \Omega$. Consequently, $A$ is a strong operator. By \cref{lem:cco}, there exist $u_{*}\in\Phi^{k}_{0} \backslash\left\{0\right\}$ and $\lambda_{1}>0$ such that
    \begin{center}
    	$u_{*}=\lambda_{1}A(u_{*})$.
    \end{center}
If we define $v_{*}=A_{2}(u_{*})$, then $(u_{*},v_{*})$ must be a solution of the following system
    \begin{center}
    	$\begin{cases}
    		S_{k}(D^{2}\frac{u}{\lambda_{1}})=(-v)^{\alpha} \qquad& in\quad\Omega \\
    		S_{k}(D^{2}v)=(-u)^{\beta}\qquad& in\quad\Omega \\
    		u<0,~v<0\qquad& in\quad\Omega \\
    		u=v=0                 \qquad& on\quad \partial\Omega
    	\end{cases}$
    \end{center}
Furthermore, by the second conclusion of \cref{lem:cco}, if $u_{1}\in \Phi^{k}_{0}\backslash\left\{0\right\}$ and $\lambda>0$ satisfy $u_{1}=\lambda A(u_{1})$, then $\lambda=\lambda_{1}$. So the following system 
    \begin{equation}\label{eq:SKalphabeta}
    	\begin{cases}
    		S_{k}(D^{2}u)=\widetilde{\lambda}(-v)^{\alpha} \qquad& in\quad\Omega \\
    		S_{k}(D^{2}v)=(-u)^{\beta}\qquad& in\quad\Omega \\
    		u<0,~v<0\qquad& in\quad\Omega \\
    		u=v=0                 \qquad& on\quad \partial\Omega
    	\end{cases}
    \end{equation}
admits a solution if and only if $\widetilde{\lambda}=\lambda^{k}_{1}$. 

Next we show that \cref{eq:CSK2} has a nontrivial $k$-admissible solution if and only if $\lambda\mu ^{\alpha/k}=\lambda^{k}_{1}$ which implies the conclusion of \cref{pro:SKEP}. Indeed, if $(u,v)$ is a nontrivial $k$-asmissible solution of \cref{eq:CSK2} then from 
    \begin{center}
    	$S_{k}(D^{2}v)=\mu (-u)^{\beta}$,
    \end{center}
    we have 
    \begin{center}
    	$	S_{k}(D^{2}(\mu^{-\frac{1}{k}}v))=(-u)^{\beta}$.
    \end{center}
    Let $\widetilde{v}=\mu^{-\frac{1}{k}}v$, then $(-v)^{\alpha}=\mu^{\alpha/k}(-\widetilde{v})^{\alpha}$. It is easy to see that $(u,\widetilde{v})$ is a $k$-admissible solution of \cref{eq:SKalphabeta} if and only if $\lambda_{1}^{k}=\widetilde{\lambda}=\lambda\mu^{\alpha/k}$.

    On the other hand, if $\lambda_{1}^{k}=\widetilde{\lambda}=\lambda\mu^{\alpha/k}$, then  \cref{eq:SKalphabeta} have nontrivial $k$-admissible solution $(u,v)$. Denote $u_{*}=u$, $v_{*}=\mu^{\frac{1}{k}}v$, then $(u_{*},v_{*})$ is a $k$-admissible of \cref{eq:CSK2}.
\end{proof}

We can immediately get the following corollary for the eigenvalue problem of $k$-Hessian systems.
\begin{coro}\label{coro:SKSE} 
The eigenvalue problem
    \begin{equation}\label{eq:SKSE}
    	\begin{cases}
    		(S_{k}(D^{2}u))^{\frac1k}=\vert \lambda	v\vert  \qquad& in\quad\Omega \\
    		(S_{k}(D^{2}v))^{\frac1k}=\vert \lambda u\vert \qquad& in\quad\Omega \\
    		u=v=0 \qquad& on\quad \partial\Omega
    	\end{cases}
    \end{equation}
admits nontrivial $k$-convex solutions if and only if $\vert\lambda\vert=\lambda_{0}(\Omega)$.
\end{coro}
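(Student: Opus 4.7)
The plan is to apply \cref{pro:SKEP} directly after rewriting the system in the power-type form that proposition was stated for. Since any nontrivial $k$-convex solution with zero boundary data is non-positive in $\Omega$ (so that $|u|=-u$ and $|v|=-v$), raising both equations of \cref{eq:SKSE} to the $k$-th power turns the system into
\begin{equation*}
\begin{cases}
S_k(D^2 u) = |\lambda|^k (-v)^k & \text{in }\Omega,\\
S_k(D^2 v) = |\lambda|^k (-u)^k & \text{in }\Omega,\\
u = v = 0 & \text{on }\partial\Omega,
\end{cases}
\end{equation*}
which is exactly the form of \cref{eq:CSK2} with the choice $\alpha = \beta = k$ (so that $\alpha\beta = k^2$) and $\lambda_{\text{Prop}} = \mu_{\text{Prop}} = |\lambda|^k$.

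Next, I would invoke \cref{pro:SKEP} with this choice. The proposition says that a nontrivial $k$-convex solution exists if and only if $\lambda_{\text{Prop}} \mu_{\text{Prop}}^{\alpha/k} = C$ for a constant $C = C(k,\alpha,\Omega) > 0$. Substituting $\alpha = k$ and $\lambda_{\text{Prop}} = \mu_{\text{Prop}} = |\lambda|^k$ collapses the condition to $|\lambda|^k \cdot |\lambda|^k = |\lambda|^{2k} = C$, i.e.\ $|\lambda| = C^{1/(2k)}$. Defining $\lambda_0(\Omega) := C^{1/(2k)}$ yields the stated equivalence.

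The only small subtlety is to justify the sign convention used when rewriting $|\lambda v| = |\lambda|(-v)$ and $|\lambda u| = |\lambda|(-u)$; this is immediate from the fact that $k$-convex functions in $\varPhi_0^k(\Omega)$ are sub-harmonic (after an appropriate linearization) and vanish on $\partial\Omega$, so by the maximum principle they are non-positive in $\Omega$, with strict negativity in the interior for any nontrivial solution. Once this is noted, the corollary is a direct specialization of \cref{pro:SKEP}, so I do not anticipate any essential obstacle beyond keeping track of the homogeneity exponents when passing from the $(S_k)^{1/k}$ form to the $S_k$ form.
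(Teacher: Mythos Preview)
Your proposal is correct and matches the paper's approach exactly: the paper states this as an immediate corollary of \cref{pro:SKEP} without writing out a separate proof, and your specialization $\alpha=\beta=k$, $\lambda_{\mathrm{Prop}}=\mu_{\mathrm{Prop}}=|\lambda|^k$ is precisely the intended one. The sign justification you give via subharmonicity is also the standard one implicit in the paper's setup of $\Phi_0^k(\Omega)$.
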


Let E be the Banach space $C(\overline{\Omega})\times C(\overline{\Omega}) $ with norm $\Vert (u,v)\Vert :=\Vert u\Vert_{\infty}+\Vert v\Vert_{\infty}$. Define a cone
    \begin{center}
    	$M:=\left\{(u,v)\in E:~u(x)\le 0,~v(x)\le 0, \forall x\in \Omega\right\}$,
    \end{center} 
then $M$ induced a partial order on $E$ via $(u_{1},v_{1})\preccurlyeq(u_{2},v_{2})\iff (u_{2}-u_{1},v_{2}-v_{1})\in M$. We firstly define the solution operator $B: E\rightarrow E$ associated to the following problem as $B(u,v)=(w,z)$, where $(w,z)$ is the solution pair of 
    \begin{equation}
    	\begin{cases}
    		S_{k}(D^{2}w)=\vert v\vert ^{k}\qquad& in\quad\Omega \\
    		S_{k}(D^{2}z)=\vert u\vert ^{k}\qquad& in\quad\Omega \\
    		w=z=0                 \qquad& on\quad \partial\Omega
    	\end{cases}
    \end{equation}
We can write $B=(B_{1},B_{2})$, where 
    \begin{equation}
    	\begin{aligned}
    		&B_{1}(u,v)=T_{k}(\vert  v\vert ^{k}) ,\\
    		&B_{2}(u,v)=T_{k}(\vert  u\vert ^{k}).\nonumber
    	\end{aligned}
    \end{equation}
By the complete continuity of $T_{k}$ (see \cite{jacobsen_global_1999}), it is easy to check $B_{1}$ and $B_{2}$ are both completely continuous. So $B$ is completely continuous.
    
Next we consider the following auxiliary problem via bifurcation,
    \begin{equation}\label{eq:SKaux}
    	\begin{cases}
    		S_{k}(D^{2}u)=\vert \lambda v\vert ^{k} +\vert \lambda \vert ^{k}f(u,v)\qquad& in\quad\Omega \\
    		S_{k}(D^{2}v)=\vert \lambda u\vert ^{k}+\vert \lambda \vert ^{k}g(u,v)\qquad& in\quad\Omega \\
    		u=v=0                 \qquad& on\quad \partial\Omega
    	\end{cases}.	
    \end{equation}
with an assumption 

${\bf (A_{1})}$ $f,g: \mathbb{R}\times \mathbb{R}\rightarrow\mathbb{R}$ are continuous, moreover, $\vert t\vert^{k}+f(s,t)>0$ and $\vert s\vert^{k}+g(s,t)>0$ for any $s,t\ne 0$.

We define the solution operator $H:\mathbb{R}^{+}\times E\rightarrow E$ associated to \cref{eq:SKaux} as $H(\lambda,(u,v))=(w,z)$, where $(w,z)$ is the solution pair of 
    \begin{equation}
    	\begin{cases}
    		S_{k}(D^{2}w)=\vert \lambda v\vert ^{k} +\vert \lambda \vert ^{k}f(u,v)\qquad& in\quad\Omega \\
    		S_{k}(D^{2}z)=\vert \lambda u\vert ^{k}+\vert \lambda \vert ^{k}g(u,v)\qquad& in\quad\Omega \\
    		w=z=0                 \qquad& on\quad \partial\Omega
    	\end{cases}.
    \end{equation}
We can write $H=(H_{1},H_{2})$, where 
    \begin{equation}
    	\begin{aligned}
    		&H_{1}(\lambda,(u,v))=T_{k}(\vert \lambda v\vert ^{k} +\vert \lambda \vert ^{k}f(u,v)), \\
    		&H_{2}(\lambda,(u,v))=T_{k}(\vert \lambda u\vert ^{k} +\vert \lambda \vert ^{k}g(u,v)).\nonumber
    	\end{aligned}
    \end{equation}
By $(A_{1})$ and the complete continuity of $T_{k}$ (see [Jacobsen]), it is easy to check $H_{1}$ and $H_{2}$ are both completely continuous. So $H$ is completely continuous. Define $h:\mathbb{R}^{+}\times E\rightarrow E,~h(\lambda,(u,v))=(u,v)-H(\lambda, (u,v))$, and consider the equation
    \begin{equation}\label{eq:h}
    	h(\lambda,(u,v))=0.
    \end{equation}
We see that $\lambda,~u~,v$ satisfy \cref{eq:SKaux} if and only if $(\lambda,(u,v))$ is a solution of \eqref{eq:h}.

The following lemma comes from \cite{jacobsen_global_1999}.
\begin{lemma}\label{lem:HB} $H$ is a completely continuous, positive operator. $B: E\rightarrow E$ is a completely continuous, monotone operator, moreover, it is homogeneous.
\end{lemma}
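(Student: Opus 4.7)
The plan is to verify each assertion directly from the definitions of $H$ and $B$, leveraging the complete continuity of $T_{k}$ from \cite{jacobsen_global_1999}, assumption $(A_{1})$, and the scaling and comparison properties of the $k$-Hessian operator developed earlier in the excerpt. For the complete continuity of $H$, assumption $(A_{1})$ makes the maps $(u,v)\mapsto |\lambda v|^{k}+|\lambda|^{k}f(u,v)$ and $(u,v)\mapsto |\lambda u|^{k}+|\lambda|^{k}g(u,v)$ jointly continuous from $\mathbb{R}^{+}\times E$ into $C(\overline{\Omega})$ and bounded on bounded sets; composing with the completely continuous operator $T_{k}$ then yields that $H_{1}$ and $H_{2}$, and hence $H=(H_{1},H_{2})$, are completely continuous. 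For positivity, $(A_{1})$ forces both right-hand sides to be non-negative, so the $k$-admissible outputs $w,z\in\Phi_{0}^{k}(\Omega)$ have $\Delta w,\Delta z\ge 0$ (because $\overline{\Gamma_{k}^{N}}\subset\overline{\Gamma_{1}^{N}}$) and the maximum principle with zero boundary data gives $w,z\le 0$ in $\Omega$, i.e., $H(\lambda,(u,v))\in M$.

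The complete continuity of $B$ follows by exactly the same argument, applied to the simpler non-negative right-hand sides $|v|^{k}$ and $|u|^{k}$. For homogeneity, the scaling identity $S_{k}(D^{2}(tw))=t^{k}S_{k}(D^{2}w)$ for $t>0$ (since $S_{k}$ is the $k$-th elementary symmetric polynomial of the Hessian eigenvalues) implies that if $w=T_{k}(|v|^{k})$ solves the defining Dirichlet problem, then $tw$ solves $S_{k}(D^{2}(tw))=t^{k}|v|^{k}=|tv|^{k}$ with zero boundary data, whence $T_{k}(|tv|^{k})=tT_{k}(|v|^{k})$ by uniqueness; componentwise this delivers $B(t(u,v))=tB(u,v)$. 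For monotonicity, suppose $(u_{1},v_{1})\preccurlyeq(u_{2},v_{2})$, i.e., $u_{2}\le u_{1}\le 0$ and $v_{2}\le v_{1}\le 0$ pointwise; then $|v_{2}|^{k}\ge|v_{1}|^{k}$ and $|u_{2}|^{k}\ge|u_{1}|^{k}$ in $\Omega$. Applying the comparison principle \cref{lem:SKcp} (with appropriate sign convention) to the two Dirichlet problems defining $B_{1}$ and $B_{2}$ yields $B_{1}(u_{2},v_{2})\le B_{1}(u_{1},v_{1})$ and $B_{2}(u_{2},v_{2})\le B_{2}(u_{1},v_{1})$ pointwise, which is precisely $B(u_{1},v_{1})\preccurlyeq B(u_{2},v_{2})$.

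The main subtlety is the sign-bookkeeping in the monotonicity step: the partial order $\preccurlyeq$ induced by the cone $M$ of non-positive pairs is the reverse of pointwise ordering of magnitudes, so one must carefully translate the standard $k$-Hessian comparison—where a larger non-negative right-hand side produces a more negative $k$-admissible solution—into the claimed statement rather than its reverse. Apart from this, the proof is essentially routine and reduces to properties of $T_{k}$ and the scaling/comparison behaviour of $S_{k}$ already established.
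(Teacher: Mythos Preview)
Your proof is correct and is exactly the direct verification one would expect; the paper itself does not prove this lemma but simply attributes it to \cite{jacobsen_global_1999}. Your argument fills in what the paper leaves to that reference, with the appropriate care about the reversed ordering induced by the cone $M$ of non-positive pairs.
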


Note that \cref{eq:SKaux} can be seen as a perturbation of the \cref{eq:SKSE}. For our purpose in this section, the perturbation terms also need to satisfy:
    
${\bf (A_{2})}$ $f(s,t)=o((\vert s\vert +\vert t\vert )^{k}),~g(s,t)=o((\vert s\vert +\vert t\vert )^{k}), ~as~ \vert s\vert +\vert t\vert \rightarrow0.$

\begin{lemma}\label{lem:deg neq}
    For any $R>0$, there exists $\lambda_{a}\ge 0$, and $\lambda_{b}>\lambda_{1}$ such that 
    \begin{center}
    	$d(id-\lambda_{a}B((\cdot ,\cdot)), B_{R}(0,0),0)\ne d(id-\lambda_{b}B(\cdot), B_{R}(0,0),0)$.
    \end{center} 
\end{lemma}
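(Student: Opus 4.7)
The lemma is the degree-change step needed to feed into the global bifurcation lemmas from Section~\ref{sec:pre}. I would take $\lambda_a = 0$ and let $\lambda_b > \lambda_1$ be arbitrary; these clearly lie in the required range. For $\lambda_a = 0$ the map $id - \lambda_a B$ is the identity, so $d(id, B_R(0,0), 0) = 1$ for every $R > 0$ and nothing further is needed at this endpoint.

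The work is at $\lambda_b > \lambda_1$, where I would produce a compact homotopy from $id - \lambda_b B$ to a map with no zeros in $\overline{B_R(0,0)}$, thereby showing the degree equals $0$. Let $(\phi_0, \psi_0) \in M \setminus \{(0,0)\}$ denote the principal eigenpair of the system \cref{eq:SKSE} supplied by Corollary~\ref{coro:SKSE}, and consider
\[
H(s,(u,v)) := (u,v) - \lambda_b B(u,v) - s(\phi_0, \psi_0), \qquad s \ge 0.
\]
The plan is: (i) show that the solution set $\Sigma := \{(s,(u,v)) : H(s,(u,v)) = 0\}$ is bounded in $[0,\infty)\times E$; (ii) pick $s_0$ so large that $H(s_0,\cdot)$ has no zero in $\overline{B_R(0,0)}$; (iii) invoke homotopy invariance to conclude $d(id - \lambda_b B, B_R, 0) = d(H(s_0,\cdot), B_R, 0) = 0$, hence differs from $1$.

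The non-trivial step is (i). I argue by contradiction: assume $(s_n, (u_n, v_n)) \in \Sigma$ satisfies $M_n := \|(u_n, v_n)\| \to \infty$. Using the positive $1$-homogeneity of $B$ (which follows from $T_k(cg) = c^{1/k} T_k(g)$ for $c > 0$ and the degree-$k$ homogeneity of $S_k$), normalize $\tilde w_n := (u_n, v_n)/M_n$ to obtain
\[
\tilde w_n = \lambda_b B(\tilde w_n) + (s_n/M_n)(\phi_0, \psi_0).
\]
If $s_n/M_n$ is bounded, the complete continuity of $B$ (Lemma~\ref{lem:HB}) lets me pass to a subsequential limit $\tilde w^* \in M$ with $\|\tilde w^*\| = 1$ and $\tilde w^* = \lambda_b B(\tilde w^*) + c(\phi_0, \psi_0)$ for some $c \ge 0$. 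For $c = 0$ this directly contradicts the uniqueness in Corollary~\ref{coro:SKSE} because $\lambda_b \neq \lambda_1$. For $c > 0$ I would exploit the cone-monotonicity of $B$ (Lemma~\ref{lem:HB}) together with the maximum principle to compare $\tilde w^*$ with $(\phi_0, \psi_0)$ and force $\lambda_b \le \lambda_1$, again a contradiction. The remaining case $s_n/M_n \to \infty$ is handled by renormalizing through division by $s_n$ and deriving the same type of contradiction with Corollary~\ref{coro:SKSE}.

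The hardest point will be the case $c > 0$ in the a priori bound: this is essentially a nonlinear Krein-Rutman-type comparison for the $1$-homogeneous positive operator $B$ on the cone $M$, and its execution depends on the simplicity of the principal eigenvalue and on the maximum-principle tools for $S_k$ already collected earlier in the paper. Once this comparison is in hand, the remainder of the argument reduces to a standard homotopy-invariance computation.
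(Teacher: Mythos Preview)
Your outline and the paper's argument share the same engine---the monotone iteration using the eigenpair $(\phi_0,\psi_0)$ and the ratio $\lambda_b/\lambda_1>1$---but the set-up is different. The paper does not push a parameter $s\to\infty$; instead it perturbs the \emph{argument} of $B$, replacing $\lambda_b B(\cdot)$ by $\lambda_b B(\cdot+\varepsilon(\phi_0,\psi_0))$ for a single small $\varepsilon>0$, invokes continuity of the degree, and then assumes the degree is nonzero to obtain a fixed point $(\bar u,\bar v)=\lambda_b B((\bar u,\bar v)+\varepsilon(\phi_0,\psi_0))$. From this fixed point it iterates monotonicity and homogeneity of $B$ to get $(\lambda_b/\lambda_1)^n\varepsilon(\phi_0,\psi_0)\preccurlyeq(\bar u,\bar v)$ for all $n$, which is impossible. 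This avoids any large-$s$ homotopy and works uniformly in $R$.

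Your route can be made to work too, but step~(iii) as written has a genuine gap: boundedness of $\Sigma$ in $[0,\infty)\times E$ does \emph{not} give homotopy admissibility on $\partial B_R$ when $R$ is small---nothing you have stated rules out a solution of $(u,v)=\lambda_b B(u,v)+s(\phi_0,\psi_0)$ with $\|(u,v)\|=R$ at some intermediate $s\in(0,s_0]$, and the lemma is later applied (in Proposition~\ref{prop:SKEBifur1}) to a small isolating ball. The repair is to note that your iteration in the ``$c>0$'' case actually proves the stronger statement that for every $s>0$ the equation has \emph{no} solution: from any solution $(u,v)$ one has $s(\phi_0,\psi_0)\preccurlyeq(u,v)$ (since $(u,v)-s(\phi_0,\psi_0)=\lambda_b B(u,v)\in M$), and inductively $s\sum_{j=0}^{n}(\lambda_b/\lambda_1)^{j}(\phi_0,\psi_0)\preccurlyeq(u,v)$, forcing $\|(u,v)\|=\infty$. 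With this in hand the homotopy is admissible on $\partial B_R$ for every $R>0$, and the normalization/case-split on $s_n/M_n$ is no longer needed. Incidentally, your case $s_n/M_n\to\infty$ cannot occur anyway, since both summands in $(u,v)=\lambda_b B(u,v)+s(\phi_0,\psi_0)$ lie in $M$, whence $\|(u,v)\|\ge s\|(\phi_0,\psi_0)\|$.
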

\begin{proof}
Fix $R>0$. One may simply choose $\lambda_{a}=0$ and conclude
    \begin{equation}
    	d(id-\lambda_{a}B(\cdot), B_{R}(0,0),0)=d(id, B_{R}(0,0),0)=1.
    \end{equation}
In fact, the above equality holds for all $\lambda_{a}>0$ sufficiently small. To see this, if not, then there exist sequences $\left\{\lambda_{m}\right\}$ and $\left\{(u_{m},v_{m})\right\}$ satisfying $\lambda_{m}\rightarrow 0$, $\Vert u_{m}\Vert +\Vert v_{m}\Vert=R$, and
    \begin{equation}
    	(u_{m},v_{m})=\lambda_{m} B(u_{m},v_{m}).
    \end{equation}
The complete continuity of $B$ applied to the above equations implies there exists a solution $(u,v)$ with $\Vert u\Vert+\Vert v\Vert =R$, such that $u_{m}\rightarrow u,~v_{m}\rightarrow v$. However, combined with the fact that $\lambda_{m}\rightarrow 0$ implies $\Vert u\Vert+\Vert v\Vert =0$, it leads to a contradiction.

Next, we show that there exists a constant $\lambda_{b}>\lambda_{1}$ such that
    \begin{equation}\label{eq:11111}
    	d(id-\lambda_{b}B((\cdot ,\cdot)), B_{R}(0,0),0)=0.
    \end{equation}
We argue by contradiction. Let $(u_{0},v_{0})$ be a nonzero solution solution pair of \cref{eq:SKSE} corresponding to $\lambda_{b}$, then we have
    \begin{equation}\label{eq:22222}
    	(u_{0},v_{0})=\lambda_{b}B((u_{0},v_{0})).
    \end{equation}
Fix $R>0$. By the continuity of Leray-Schauder degree, we can choose $\varepsilon>0$ small such that
    \begin{center}
    	$d(id-\lambda_{b}B((\cdot ,\cdot)), B_{R}(0,0),0)=d(id-\lambda_{b}B((\cdot ,\cdot)+\varepsilon(u_{0},v_{0})), B_{R}(0,0),0)$.
    \end{center}
When \eqref{eq:11111} is not true, we obtain
    \begin{center}
    	$d(id-\lambda_{b}B((\cdot ,\cdot)+\varepsilon(u_{0},v_{0})), B_{R}(0,0),0)\ne 0$,
    \end{center} 
which implies the existence of $(\overline{u},\overline{v})$ such that 
    \begin{equation}\label{eq:33333}
        (\overline{u},\overline{v})=\lambda_{b}B((\overline{u},\overline{v})+\varepsilon(u_{0},v_{0})).
    \end{equation}
Recall the partial order induced by $M$ in $E$, we have $(\overline{u},\overline{v})\preccurlyeq (\overline{u},\overline{v})+\varepsilon(u_{0},v_{0})$. Since $B$ is monotone, we obtain 
    \begin{equation}
    	B((\overline{u},\overline{v}))\preccurlyeq B((\overline{u},\overline{v})+\varepsilon(u_{0},v_{0})).
    \end{equation}
then combining this with \eqref{eq:33333} gives us 
    \begin{equation}\label{eq:44444}
    	B((\overline{u},\overline{v}))\preccurlyeq \frac{(\overline{u},\overline{v})}{\lambda_{b}}.
    \end{equation} 
On the other hand, from $\varepsilon(u_{0},v_{0})\preccurlyeq (\overline{u},\overline{v})+\varepsilon(u_{0},v_{0})$, we have
    \begin{center}
    	$B(\varepsilon(u_{0},v_{0}))\preccurlyeq B((\overline{u},\overline{v})+\varepsilon(u_{0},v_{0}))$;
    \end{center}
using \eqref{eq:33333} again, we have
    \begin{equation}
    	\lambda_{b} B(\varepsilon(u_{0},v_{0}))\preccurlyeq (\overline{u},\overline{v}).
    \end{equation}
By the above partial order relation, \eqref{eq:22222} and the homogeneity of $B$, we have
    \begin{equation}\label{eq:55555}
    	\frac{\lambda_{b}\varepsilon(u_{0},v_{0})}{\lambda_{1}}\preccurlyeq (\overline{u},\overline{v}).
    \end{equation}
Now operate $B$ on both sides of \eqref{eq:55555}, we have 
    \begin{equation}
    	\frac{\lambda_{b}\varepsilon B((u_{0},v_{0}))}{\lambda_{1}}\preccurlyeq B((\overline{u},\overline{v})).
    \end{equation}
Combining this with \eqref{eq:22222} and \eqref{eq:44444}, we deduce
    \begin{equation}
    	\frac{\lambda_{b}^{2}\varepsilon(u_{0},v_{0})}{\lambda_{1}^{2}}\preccurlyeq (\overline{u},\overline{v}).
    \end{equation}
Noticing this together with \eqref{eq:55555}, one can prove by induction that
    \begin{center}
    	$\frac{\lambda_{b}^{n}\varepsilon(u_{0},v_{0})}{\lambda_{1}^{n}}\preccurlyeq (\overline{u},\overline{v}),\quad \forall n\in \mathbb{N}$.
    \end{center}
So,
    \begin{center}
    	$(u_{0},v_{0})\preccurlyeq(\frac{\lambda_{1}}{\lambda_{b}})^{n}\cdot\frac{(\overline{u},\overline{v})}{\varepsilon},\quad \forall n\in \mathbb{N}$.
    \end{center}
Letting $n\rightarrow +\infty$, from $\lambda_{b}>\lambda_{1}>0$ we obtain $(u_{0},v_{0})\preccurlyeq (0,0)$. Thus $-(u_{0},v_{0})\in M$, giving $(u_{0},v_{0})\in M\cap (-M)=(0,0)$, contradiction with $(u_{0},v_{0})\ne (0,0)$.
\end{proof} 

\begin{prop}\label{prop:SKEBifur1} Assume $(A_{1})$ and $(A_{2})$ hold, then $(\mu ,(0,0))$ is a bifurcation point for \eqref{eq:h} if and only if $\mu =\lambda_{1}$, furthermore the continuum $\mathcal{C}$ emanating from $(\lambda_{1},(0,0))$ is unbounded. 
\end{prop}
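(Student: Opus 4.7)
The plan is to establish bifurcation at $\mu = \lambda_{1}$ in two stages: first a scaling argument rules out every $\mu \neq \lambda_{1}$ as a bifurcation point (necessity), and then a homotopy-based degree computation together with \cref{lem:GB} produces an unbounded continuum through $(\lambda_{1},(0,0))$ (sufficiency and unboundedness).

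For necessity, suppose $(\mu,(0,0))$ is a bifurcation point and choose $(\lambda_{n},(u_{n},v_{n})) \to (\mu,(0,0))$ with $(u_{n},v_{n}) \neq (0,0)$ solving \eqref{eq:h}. Writing $c_{n} := \|(u_{n},v_{n})\|$ and $(\tilde u_{n},\tilde v_{n}) := (u_{n},v_{n})/c_{n}$, and exploiting the homogeneity $T_{k}(c^{k}g) = |c|\,T_{k}(g)$ of the solution operator together with the decay $f(c_{n}\tilde u_{n},c_{n}\tilde v_{n}) = o(c_{n}^{k})$, $g(c_{n}\tilde u_{n},c_{n}\tilde v_{n}) = o(c_{n}^{k})$ supplied by $(A_{2})$, the equation $(u_{n},v_{n}) = H(\lambda_{n},(u_{n},v_{n}))$ rescales to
\begin{equation*}
\tilde u_{n} = |\lambda_{n}|\,T_{k}\!\bigl(|\tilde v_{n}|^{k}+o(1)\bigr), \qquad \tilde v_{n} = |\lambda_{n}|\,T_{k}\!\bigl(|\tilde u_{n}|^{k}+o(1)\bigr).
\end{equation*}
The complete continuity of $T_{k}$ (hence of $B$, by \cref{lem:HB}) extracts a subsequential limit $(\tilde u,\tilde v)$ with $\|(\tilde u,\tilde v)\| = 1$ satisfying $(\tilde u,\tilde v) = \mu B(\tilde u,\tilde v)$, and \cref{coro:SKSE} then forces $\mu = \lambda_{1}$.

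For sufficiency, fix $0 < a < \lambda_{1} < b$; by the necessity step, neither $(a,0)$ nor $(b,0)$ is a bifurcation point. To compare the degrees of $id - H(a,\cdot)$ and $id - H(b,\cdot)$ on a small ball $B_{r}(0,0)$, I introduce for each fixed $\mu \in \{a,b\}$ the homotopy
\begin{equation*}
F_{t}(u,v) := \bigl(T_{k}(\mu^{k}(|v|^{k}+tf(u,v))),\, T_{k}(\mu^{k}(|u|^{k}+tg(u,v)))\bigr), \qquad t \in [0,1],
\end{equation*}
which interpolates between $F_{0} = \mu B$ and $F_{1} = H(\mu,\cdot)$. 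The same rescaling as in the necessity step shows that no sequence of fixed points of $F_{t_{n}}$ can accumulate at $(0,0)$ when $\mu \neq \lambda_{1}$, so the homotopy is admissible on $B_{r}(0,0)$ for $r$ small. Consequently $d(id-H(\mu,\cdot),B_{r},0) = d(id-\mu B,B_{r},0)$ for $\mu \in \{a,b\}$, and \cref{lem:deg neq} supplies $d(id-aB,B_{r},0) = 1 \neq 0 = d(id-bB,B_{r},0)$ (taking $a$ small and $b$ as the constant $\lambda_{b}$ produced by that lemma). Applying \cref{lem:GB} yields a continuum $\mathcal{C}$ through $[a,b]\times\{0\}$ which is either unbounded or meets $\mathbb{R}\times\{0\}$ outside $[a,b]$; the second alternative is ruled out by necessity, since every touch point of the closure of nontrivial solutions with the trivial line is a bifurcation point and $\lambda_{1}$ is already interior to $[a,b]$. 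Therefore $\mathcal{C}$ is unbounded and it emanates from $(\lambda_{1},(0,0))$.

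The main technical obstacle is verifying admissibility of the homotopy $F_{t}$ uniformly in $t\in[0,1]$: one needs both the scaling identity $T_{k}(c^{k}g) = |c|T_{k}(g)$ and the sharp $o((|s|+|t|)^{k})$ decay from $(A_{2})$ to cancel the prefactor $c_{n}^{k}$ on the right-hand side before applying $T_{k}$, and one needs nontriviality of the limit, which follows because compactness of $T_{k}$ applied to the bounded family $|\tilde v_{n}|^{k}+o(1)$ in $C(\overline{\Omega})$ yields strong convergence so the normalization $\|(\tilde u_{n},\tilde v_{n})\|=1$ survives in the limit.
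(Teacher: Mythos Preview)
Your proposal is correct and follows essentially the same approach as the paper: both prove necessity by normalizing a bifurcating sequence, using compactness of $T_{k}$ to extract a nontrivial limit solving the eigenvalue system, and invoking \cref{coro:SKSE}; both prove sufficiency by a homotopy from $H(\mu,\cdot)$ to $\mu B$ (admissible by the same rescaling argument), then appeal to \cref{lem:deg neq} for the degree jump and to \cref{lem:GB} for the continuum, ruling out alternative~(2) via the necessity step. The only cosmetic difference is that the paper divides the PDE by $(\|u_{n}\|+\|v_{n}\|)^{k}$ directly, whereas you phrase the same computation via the homogeneity identity $T_{k}(c^{k}g)=|c|\,T_{k}(g)$.
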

\begin{proof}

Necessary: Suppose $(\mu ,(0,0))$ is a bifurcation point for \eqref{eq:h}. Then there exists a sequence $(\lambda_{n},(u_{n},v_{n}))\rightarrow(\mu ,(0,0))$, such that $\Vert u_{n}\Vert +\Vert v_{n}\Vert \ne 0$ for all $n$ and $(\lambda_{n},(u_{n},v_{n}))$ satisfy
    \begin{equation}
    	\begin{cases}
    		S_{k}(D^{2}u_{n})=\vert \lambda_{n} v_{n}\vert ^{k} +\vert \lambda_{n}\vert ^{k}f(u_{n},v_{n})\qquad& in\quad\Omega \\
    		S_{k}(D^{2}v_{n})=\vert \lambda_{n} u_{n}\vert ^{k}+\vert \lambda_{n}\vert ^{k}g(u_{n},v_{n})\qquad& in\quad\Omega \\
    		u_{n}=v_{n}=0                 \qquad& on\quad \partial\Omega
    	\end{cases}.
    \end{equation}
Divide each equations above by $(\Vert u_{n}\Vert +\Vert v_{n}\Vert )^{k}$ and denote
    \begin{equation}
    	\begin{aligned}
    		&\widetilde{u}_{n}=\frac{u_{n}}{\Vert u_{n}\Vert +\Vert v_{n}\Vert },\quad\widetilde{v}_{n}=\frac{v_{n}}{\Vert u_{n}\Vert +\Vert v_{n}\Vert },\\
    		&\widetilde{f}_{n}=\frac{f(u_{n},v_{n})}{(\Vert u_{n}\Vert +\Vert v_{n}\Vert )^{k}},\quad\widetilde{g}_{n}=\frac{g(u_{n},v_{n})}{(\Vert u_{n}\Vert +\Vert v_{n}\Vert )^{k}}.\nonumber
    	\end{aligned}
    \end{equation}
Then we obtain
    \begin{equation}\label{eq:SKBBB}
    	\begin{cases}
    		S_{k}(D^{2}\widetilde{u}_{n})=\vert \lambda_{n} \widetilde{v}_{n}\vert ^{k} +\vert \lambda_{n}\vert ^{k}\widetilde{f}_{n}(u_{n},v_{n})\qquad& in\quad\Omega \\
    		S_{k}(D^{2}\widetilde{v}_{n})=\vert \lambda_{n} \widetilde{u}_{n}\vert ^{k} +\vert \lambda_{n}\vert ^{k}\widetilde{g}_{n}(u_{n},v_{n})\qquad& in\quad\Omega \\
    		\widetilde{u}_{n}=\widetilde{v}_{n}=0                 \qquad& on\quad \partial\Omega
    	\end{cases}.
    \end{equation}
Note $\Vert u_{n}\Vert +\Vert v_{n}\Vert\ne 0 $ and  $u_{n},~v_{n}$ are $k$-admissible functions with zero boundary date, we have $\vert u_{n}\vert +\vert v_{n}\vert\ne 0$ for any $x\in \Omega$. Thus, for $x\in \Omega$, we have 
    \begin{center}
    	$0\le \widetilde{f}_{n}=\frac{f(u_{n},v_{n})}{(\vert u_{n}\vert +\vert v_{n}\vert)^{k}}.(\frac{\vert u_{n}\vert +\vert v_{n}\vert }{\Vert u_{n}\Vert +\Vert v_{n}\Vert})^{k}\le \frac{f(u_{n},v_{n})}{(\vert u_{n}\vert +\vert v_{n}\vert)^{k}}$.
    \end{center}
Notice that $(u_{n},v_{n})\rightarrow(0,0)$ in $C(\overline{\Omega})\times C(\overline{\Omega})$, we deduce from the above inequalities and $(A_{2})$ that $\widetilde{f}_{n}\rightarrow0$ uniformly for $x\in \Omega$ as $n\rightarrow+\infty$. Combining this with the facts $\Vert \widetilde{v}_{n}\Vert\le 1$ and $\lambda_{n}\rightarrow\mu $, we see that $\left\{\vert \lambda_{n}\widetilde{v}_{n}\vert ^{k}+\vert \lambda_{n}\vert ^{k}\widetilde{f}_{n}\right\}$ is bounded in $C(\overline{\Omega})$. Hence by the compactness of $T_{k}$, we obtain a sub-sequence of $\left\{\widetilde{u}_{n}\right\}$, still denoted $\left\{\widetilde{u}_{n}\right\}$ such that $\widetilde{u}_{n}\rightarrow u_{*}$ for some $u_{*}\in C(\overline{\Omega})$. Similarly, $\widetilde{v}_{n}\rightarrow v_{*}$. Then, by the continuity of $T_{k}$, we infer from \cref{eq:SKBBB} that
    \begin{center}
    	$\begin{cases}
    		u_{*}=T_{k}(\vert \mu v_{*}\vert ^{k})\\
    		v_{*}=T_{k}(\vert \mu u_{*}\vert ^{k})
    	\end{cases}.$
    \end{center}
    
We claim $(u_{*},v_{*})\ne (0,0)$, indeed
    \begin{equation*}
    	\begin{aligned}
    		&\Vert u_{*}\Vert =\underset{n\rightarrow+\infty}{lim}\frac{\Vert u_{n}\Vert}{\Vert u_{n}\Vert +\Vert v_{n}\Vert }, \\
    		&\Vert v_{*}\Vert =\underset{n\rightarrow+\infty}{lim}\frac{\Vert v_{n}\Vert }{\Vert u_{n}\Vert +\Vert v_{n}\Vert},
    	\end{aligned}
    \end{equation*}
which yield
    \begin{center}
    	$\Vert u_{*}\Vert +\Vert v_{*}\Vert =\underset{n\rightarrow+\infty}{\lim }\frac{\Vert u_{n}\Vert +\Vert v_{n}\Vert}{\Vert u_{n}\Vert +\Vert v_{n}\Vert }=1$.
    \end{center}
By \cref{coro:SKSE} we reach that $\mu =\lambda_{1}$.

Sufficiency: We need to find $\lambda_{a}$ and $\lambda_{b}$ which $\lambda_{a}<\lambda_{1}<\lambda_{b}$ such that $(u,v)=(0,0)$ is a isolated solution for \eqref{eq:h}. In fact, $(0,0)$ is isolated of \eqref{eq:h} for $\mu \ne \lambda_{1}$ by above. Let
    \begin{center}
    	$\begin{cases}
    		\overline{H}_{1}(t,(u,v))=T_{k}( \vert \mu v\vert ^{k}+t\mu^{k} f(u,v)) \\
    		\overline{H}_{2}(t,(u,v))=T_{k}( \vert\mu u\vert ^{k}+t\mu^{k} g(u,v))	
    	\end{cases}$,
    \end{center}
then $\overline{H}_{t}=(\overline{H}_{1},\overline{H}_{2})$ is a degree preserving homotopy between the operator 
    \begin{center}
    	$\begin{cases}
    		T_{k}(\vert \mu v\vert ^{k}+t\mu^{k} f(u,v))\\
    		T_{k}(\vert \mu u\vert ^{k}+t\mu^{k} g(u,v))
    	\end{cases}$
    	and\quad
    	$\begin{cases}
    		T_{k}(\vert \mu v\vert ^{k})\\
    		T_{k}(\vert \mu u\vert ^{k})
    	\end{cases}.$
    \end{center}
Otherwise, there exist $\left\{t_{m}\right\}$ and $\left\{(u_{m},v_{m})\right\}\rightarrow(0,0),~\left\{(u_{m},v_{m})\right\}\ne (0,0)$ satisfy 
    \begin{center}
    	$\begin{cases}
    		S_{k}(D^{2}u_{m})=\vert \mu v_{m}\vert ^{k}+t_{m}\mu^{k}f(u_{m},v_{m})\\ 
    		S_{k}(D^{2}v_{m})=\vert \mu u_{m}\vert ^{k}+t_{m}\mu^{k}g(u_{m},v_{m})
    	\end{cases}.$
    \end{center}
Similarly to above arguments, we will get contradiction. So in order to find $\lambda_{a}<\lambda_{1}<\lambda_{b}$ such that
    \begin{center}
    	$d(id-H(\lambda_{a}),B_{R}(0,0),0)\ne d(id-H(\lambda_{b}),B_{R}(0,0),0) $,
    \end{center}
where $B_{R}(0,0)$ is a sufficiently small isolating neighborhood of the trivial solution. And it is sufficient to find $\lambda_{a}<\lambda_{1}<\lambda_{b}$ such that
    \begin{center}
    	$d(id-\lambda_{a}B(\cdot),B_{R}(0,0),0)\ne d(id-\lambda_{b}B(\cdot),B_{R}(0,0),0)$.
    \end{center}
However, this is precisely the conclusion of \cref{lem:deg neq}, which is valid for any $R>0$. 

Therefore, if we let $\mathcal{S}$ be defined by
    \begin{center}
    	$\mathcal{S}=\overline{\left\{(\lambda,(u,v)):(\lambda,(u,v)) ~is ~a~ solution ~of ~\cref{eq:SKaux} ~with~(u,v)\ne (0,0)\right\}}\cup (\left[\lambda_{a},\lambda_{b}\right]\times \left\{0\right\})$,
    \end{center}
then, by \cref{lem:GB}, there exists a connected component $\mathcal{C}$ of $\mathcal{S}$ containing $\left[\lambda_{a},\lambda_{b}\right]\times \left\{0\right\}$ which means either
    \par 
    (1) $\mathcal{C}$ is unbounded in $\mathbb{R}\times E$, or 
    \par 
    (2) $\mathcal{C}\cap  [(\mathbb{R}\backslash[a,b])\times \left\{0\right\}]\ne \emptyset$.\\
Since $(u,v)=(0,0)$ is the unique solution corresponding to $\lambda=0$, the continuum $\mathcal{C}$ cannot cross the $\lambda$=0 axis, other than at the trivial crossing, when $(u,v)=(0,0)$. Furthermore, since $(u,v)=(0,0)$ is an isolated solution for \eqref{eq:h} for all $\lambda\ge 0$ with $\lambda\ne \lambda_{1}$, the alternative (2) above may not hold. Therefore we may conclude $\mathcal{C}$ is unbounded in $\mathbb{R}\times E$.
\end{proof}

In order to study the asymptotic bifurcation phenomena, we need another assumption.

${\bf (A_{3})}$ $\frac{f(s,t)}{(\vert s\vert +\vert t\vert )^{k}}\rightarrow 0$ and $\frac{g(s,t)}{(\vert s\vert +\vert t\vert )^{k}}\rightarrow 0$ as $\vert s\vert +\vert t\vert \rightarrow \infty$.
    
\begin{prop}\label{prop:SKEBifur2} Assume $(A_{1})$ and $(A_{3})$ hold, then $\mu $ is an asymptotic bifurcation value for \eqref{eq:h} if and only if $\mu =\lambda_{1}$.
\end{prop}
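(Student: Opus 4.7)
The plan is to mirror the proof of \cref{prop:SKEBifur1} almost line for line, with $(A_3)$ playing at infinity the role that $(A_2)$ played at the origin. The key observation is that after dividing the Hessian system by $(\|u_n\|+\|v_n\|)^{k}$ and normalizing, the perturbation terms $f(u_n,v_n)/(\|u_n\|+\|v_n\|)^{k}$ and $g(u_n,v_n)/(\|u_n\|+\|v_n\|)^{k}$ must vanish, forcing the limit pair to solve the pure eigenpair system \cref{eq:SKSE}.

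For the \textbf{necessity} direction, suppose $\mu$ is an asymptotic bifurcation value, so there exists a sequence of solutions $(\lambda_n,(u_n,v_n))$ of \eqref{eq:h} with $\lambda_n\to\mu$ and $\|u_n\|+\|v_n\|\to+\infty$. Set $\widetilde{u}_n=u_n/(\|u_n\|+\|v_n\|)$, $\widetilde{v}_n=v_n/(\|u_n\|+\|v_n\|)$, and divide each equation by $(\|u_n\|+\|v_n\|)^{k}$. Pointwise on $\Omega$, where $|u_n|+|v_n|\neq0$,
$$0\le\frac{f(u_n,v_n)}{(\|u_n\|+\|v_n\|)^{k}}=\frac{f(u_n,v_n)}{(|u_n|+|v_n|)^{k}}\left(\frac{|u_n|+|v_n|}{\|u_n\|+\|v_n\|}\right)^{k}\le\frac{f(u_n,v_n)}{(|u_n|+|v_n|)^{k}},$$
and analogously for $g$. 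Using $(A_3)$ together with a monotone-envelope argument of the type $\widetilde{\eta}(t):=\sup_{0<|s|\le t}|\eta(s)|$, $\overline{\eta}(t):=\sup_{t/2\le|s|\le t}|\eta(s)|$ used in the proof of \cref{thm:SK1}(1), one shows that the perturbation quotients converge to $0$ uniformly on $\overline{\Omega}$. Compactness of $T_k$ then produces a subsequential limit $(u_*,v_*)$ with $\|u_*\|+\|v_*\|=1$ satisfying $u_*=T_k(|\mu v_*|^{k})$, $v_*=T_k(|\mu u_*|^{k})$, so by \cref{coro:SKSE} we must have $\mu=\lambda_1$.

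For the \textbf{sufficiency} direction, I would invoke \cref{lem:GAB}. First, establish a priori bounds on nontrivial solutions of \eqref{eq:h} at any fixed $\lambda\neq\lambda_1$: if such a bound failed, the normalization argument of the necessity part would yield an eigenpair of \cref{eq:SKSE} at eigenvalue $\lambda\neq\lambda_1$, contradicting \cref{coro:SKSE}. Consequently, for any $\lambda_a<\lambda_1<\lambda_b$ there is a uniform constant $M$ such that all nontrivial solutions at $\lambda=\lambda_a,\lambda_b$ satisfy $\|(u,v)\|\le M$. Next, for $R>M$, use the homotopy $\overline{H}_t$ of \cref{prop:SKEBifur1} (with $(A_3)$ replacing $(A_2)$ in the deformation estimate) to reduce the degree computation to that of $\mathrm{id}-\lambda_{a,b}B(\cdot)$; \cref{lem:deg neq} then gives the needed inequality of Leray–Schauder degrees on $B_R(0,0)$. \cref{lem:GAB} yields an unbounded continuum of solutions bifurcating from infinity in $[\lambda_a,\lambda_b]\times E$, and alternative (2) of that lemma is excluded by the uniqueness of $\lambda_1$ as an eigenvalue of \cref{eq:SKSE}.

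The main obstacle I anticipate is the uniform-in-$x$ control of $f(u_n,v_n)/(\|u_n\|+\|v_n\|)^{k}$: assumption $(A_3)$ gives decay only where $|u_n(x)|+|v_n(x)|$ is large, so at points where the rescaled pair $(\widetilde{u}_n,\widetilde{v}_n)$ stays near zero one needs to show the quotient is still small. The envelope functions $\widetilde{\eta}$, $\overline{\eta}$ handle this by bounding the supremum in terms of a quantity that is monotone and tends to zero, giving the uniform convergence required to pass to the limit inside $T_k$.
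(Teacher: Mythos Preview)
Your proposal is correct and follows essentially the same route as the paper. The only notable difference is in how you establish the uniform vanishing of $\widetilde{f}_m=f(u_m,v_m)/(\|u_m\|+\|v_m\|)^{k}$: the paper uses a direct two-region split---fix $\varepsilon>0$, choose $M_0$ from $(A_3)$ so that $f(s,t)/(|s|+|t|)^k<\varepsilon$ whenever $|s|+|t|>M_0$, and on the complementary set bound $f$ by its maximum $f^*$ over $\{|s|+|t|\le M_0\}$, giving $\widetilde{f}_m\le f^*/(\|u_m\|+\|v_m\|)^k<\varepsilon$ for large $m$. This is shorter than the envelope construction you propose, though your approach also works once adapted to the two-variable radius $r=|s|+|t|$. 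For sufficiency, your outline (a priori bounds at $\lambda\ne\lambda_1$ via the necessity argument, homotopy to $\mathrm{id}-\lambda B$, then \cref{lem:deg neq} and \cref{lem:GAB}) matches the paper exactly.
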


\begin{proof}

Necessity. Let $\mu$ be a bifurcation value for \eqref{eq:h}. Then there exists a sequence $(\lambda_{m},(u_{m},v_{m}))$, such that $\lambda_{m}\rightarrow \mu $, $\Vert u_{m}\Vert +\Vert v_{m}\Vert \rightarrow \infty$ for $m\rightarrow+\infty$ and $(\lambda_{m},(u_{m},v_{m}))$ satisfy
    \begin{equation*}
    	\begin{cases}
    		S_{k}(D^{2}u_{m})=\vert \lambda_{m} v_{m}\vert ^{k} +\vert \lambda_{m}\vert ^{k}f(u_{m},v_{m})\qquad& in\quad\Omega \\
    		S_{k}(D^{2}v_{m})=\vert \lambda_{m} u_{m}\vert ^{k}+\vert \lambda_{m}\vert ^{k}g(u_{m},v_{m})\qquad& in\quad\Omega \\  
    		u_{m}=v_{m}=0                 \qquad& on\quad \partial\Omega
    	\end{cases}.
    \end{equation*}
Divide each equation in above by $(\Vert u_{m}\Vert +\Vert v_{m}\Vert )^{k}$ and denote
    \begin{equation}
    	\begin{aligned}
    		&\widetilde{u}_{m}=\frac{u_{m}}{\Vert u_{m}\Vert +\Vert v_{m}\Vert },\quad\widetilde{v}_{m}=\frac{v_{m}}{\Vert u_{m}\Vert +\Vert v_{m}\Vert },\\
    		&\widetilde{f}_{m}=\frac{f(u_{m},v_{m})}{(\Vert u_{m}\Vert +\Vert v_{m}\Vert )^{k}},\quad\widetilde{g}_{m}=\frac{g(u_{m},v_{m})}{(\Vert u_{m}\Vert +\Vert v_{m}\Vert )^{k}}.\nonumber
    	\end{aligned}
    \end{equation}
Then we obtain
    \begin{equation*}
    	\begin{cases}
    		S_{k}(D^{2}\widetilde{u}_{m})=\vert \lambda_{m} \widetilde{v}_{m}\vert ^{k} +\vert \lambda_{m}\vert ^{k}\widetilde{f}_{m}(u_{m},v_{m})\qquad& in\quad\Omega \\
    		S_{k}(D^{2}\widetilde{v}_{m})=\vert \lambda_{m} \widetilde{u}_{m}\vert ^{k} +\vert \lambda_{m}\vert ^{k}\widetilde{g}_{m}(u_{m},v_{m})\qquad& in\quad\Omega \\
    		\widetilde{u}_{m}=\widetilde{v}_{m}=0                 \qquad& on\quad \partial\Omega
    	\end{cases}.
    \end{equation*}
By condition $(A_{3})$, for each $\varepsilon>0$, there exists $M_{0}>0$,such that if $\vert (s,t)\vert :=\vert s\vert +\vert t\vert >M_{0}$, then
    \begin{equation*}
    	\frac{f(s,t)}{(\vert s\vert +\vert t\vert  )^{k}}<\varepsilon.
    \end{equation*}
For this $M_{0}>0$, denote $f^{*}:=max_{\vert (s,t)\vert\le M_{0}}f(s,t)$, then for large $m$,
    \begin{equation*}
    	\frac{f^{*}}{(\Vert u_{m}\Vert +\Vert v_{m}\Vert )^{k}}<\varepsilon.
    \end{equation*}
By the above two inequalities with $(A_{1})$, we deduce that for $m$ sufficiently large,
    \begin{center}
    	$0\le \widetilde{f}_{m}<\varepsilon,\quad\forall x\in \Omega$.
    \end{center}
So we have $\widetilde{f}_{m}\rightarrow 0$ as $m\rightarrow+\infty$. Similarly, $\widetilde{g}_{m}\rightarrow 0$ uniformly for $x\in \Omega$, as $m\rightarrow+\infty$.
By mimicking the counterpart in the proof of \cref{prop:SKEBifur1}, one is ready to reach $\mu =\lambda_{1}$. 

Sufficiency: We need to find $\lambda_{a}$ and $\lambda_{b}$ which $\lambda_{a}<\lambda_{1}<\lambda_{b}$ such that solutions to \eqref{eq:h} satisfy
    \begin{equation}\label{eq:66666}
    	(T_{k}( \vert\lambda_{a} v \vert ^{k}+\lambda_{a}^{k} f(u,v)),T_{k}( \vert\lambda_{a} u \vert ^{k}+\lambda_{a}^{k} g(u,v)))\ne (u,v)\ne (T_{k}( \vert\lambda_{b} v \vert ^{k}+\lambda_{b}^{k} f(u,v)),T_{k}( \vert\lambda_{b} u \vert ^{k}+\lambda_{b}^{k} g(u,v)))
    \end{equation}
for all $u,v\in E$ with $\Vert u\Vert +\Vert v\Vert \ge M$, and
    \begin{equation}\label{eq:77777}
    	d(id-H(\lambda_{a}),B_{R}(0,0),0)\ne d(id-H(\lambda_{b}),B_{R}(0,0),0),
    \end{equation}
for $R>M$. Condition \eqref{eq:66666} is readily satisfied by choosing any constants $\lambda_{a}$ and $\lambda_{b}$ such that $\lambda_{a}\ne \lambda_{1}\ne \lambda_{b}$, as we have already shown that $\lambda_{1}$ is the only positive asymptotic bifurcation value for \eqref{eq:h}. Furthermore, by using a homotopy argument, dual to the case considered in \cref{lem:deg neq}, one sees that \eqref{eq:77777} is equivalent to
    \begin{center}
    	$d(id-\lambda_{a}B(\cdot),B_{R}(0,0),0)\ne d(id-\lambda_{b}B(\cdot),B_{R}(0,0),0)$,
    \end{center}
for all $R>0$, sufficiently large. However, this is precisely the conclusion of \cref{prop:SKEBifur1}, which is valid for any $R>0$. Therefore, by \cref{lem:GAB}, there exists a continuum $\mathcal{C}^{\infty}$ to \cref{eq:SKaux} that is unbounded in $[\lambda_{a},\lambda_{b}]\times E$.
\end{proof}

\subsection{A-priori Estimate}

Firstly, we need some result of a-priori estimate which inspired by \cite{chou_variational_2001} and \cite{wang_existence_1992}.
    \begin{equation}\label{eq:CSK3}
    	\begin{cases}
    		S_{k}(D^{2}u)=f(u,v)\qquad& in\quad\Omega \\  
    		S_{k}(D^{2}v)=g(u,v)\qquad& in\quad\Omega \\  
    		u=v=0               \qquad& on\quad \partial\Omega
    	\end{cases}
    \end{equation}
where $\Omega$ is a $(k$-$1)$-convex domain. 

\begin{prop} \label{prop:priori}
Consider \cref{eq:CSK3} with the following hold.
    \begin{center}
    	$\underset{\vert s\vert +\vert t \vert \rightarrow \infty}{lim}\frac{f(s,t)}{\vert t\vert ^{k}}<\lambda_{1}^{k},\quad\underset{\vert u\vert +\vert v \vert \rightarrow \infty}{lim}\frac{g(s,t)}{\vert s\vert ^{k}}<\lambda_{1}^{k}$.
    \end{center} 
Then there exists $C>0$ depending on $n,~k$ and $\Omega$ such that for any $k$-admissible solution of \cref{eq:CSK3},
    \begin{center}
    	$\Vert u\Vert +\Vert v\Vert \le C$.
    \end{center}
\end{prop}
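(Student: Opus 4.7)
The plan is to argue by contradiction, mimicking the scaling/normalization strategy used in the proof of \cref{lem:Apriori} and in the necessity part of \cref{prop:SKEBifur2}, now adapted to the coupled setting by iterating the solution operator $B$.

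First, suppose the conclusion fails. Then there is a sequence $(u_n,v_n)$ of $k$-admissible solutions of \cref{eq:CSK3} with $M_n:=\|u_n\|+\|v_n\|\to\infty$. By the hypothesis, choose $\bar\lambda\in(0,\lambda_1)$ and $M_0>0$ such that $f(s,t)\le\bar\lambda^{k}|t|^{k}$ and $g(s,t)\le\bar\lambda^{k}|s|^{k}$ whenever $|s|+|t|>M_0$, while both $f$ and $g$ are uniformly bounded by some constant $C_0$ on $\{|s|+|t|\le M_0\}$. Setting $\tilde u_n:=u_n/M_n$, $\tilde v_n:=v_n/M_n$, so that $\|\tilde u_n\|+\|\tilde v_n\|=1$, the homogeneity of $S_k$ rescales the system to
\[
  S_k(D^2\tilde u_n)\le\bar\lambda^{k}|\tilde v_n|^{k}+\frac{C_0}{M_n^{k}},\qquad
  S_k(D^2\tilde v_n)\le\bar\lambda^{k}|\tilde u_n|^{k}+\frac{C_0}{M_n^{k}}.
\]

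Second, the right-hand sides are uniformly bounded in $L^\infty(\Omega)$, so the complete continuity of $T_k$ combined with the representations $\tilde u_n=T_k(f(u_n,v_n)/M_n^{k})$ and $\tilde v_n=T_k(g(u_n,v_n)/M_n^{k})$ gives, along a subsequence, limits $\tilde u_n\to\tilde u$ and $\tilde v_n\to\tilde v$ in $C(\overline\Omega)$ with $(\tilde u,\tilde v)\in\varPhi_0^{k}\times\varPhi_0^{k}$ and $\|\tilde u\|+\|\tilde v\|=1$. Passing to the limit and applying the comparison principle \cref{lem:SKcp} together with the homogeneity of $T_k$ produces the pointwise inequalities $|\tilde u|\le\bar\lambda\,|T_k(|\tilde v|^{k})|$ and $|\tilde v|\le\bar\lambda\,|T_k(|\tilde u|^{k})|$, which translate, in the cone partial order induced by $M$, to
\[
  (\tilde u,\tilde v)\preccurlyeq\bar\lambda B(\tilde u,\tilde v),
\]
where $B$ is the operator introduced before \cref{lem:HB}.

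Third, by the monotonicity and positive homogeneity of $B$ recorded in \cref{lem:HB}, the inequality iterates to $(\tilde u,\tilde v)\preccurlyeq(\bar\lambda B)^{n}(\tilde u,\tilde v)$ for every $n\ge 1$. From \cref{coro:SKSE} the positive number $\lambda_1^{-1}$ is the unique eigenvalue of $B$ with eigenvector in $M\setminus\{0\}$; the Krein--Rutman type argument underlying \cref{pro:SKEP} then identifies the spectral radius $r(B)=\lambda_1^{-1}$, so that $r(\bar\lambda B)=\bar\lambda/\lambda_1<1$. Hence $(\bar\lambda B)^{n}(\tilde u,\tilde v)\to(0,0)$ in $E$, and since each iterate lies in the cone $M$ the chain $(\tilde u,\tilde v)\preccurlyeq(\bar\lambda B)^{n}(\tilde u,\tilde v)$ forces $\tilde u\equiv\tilde v\equiv0$, contradicting $\|\tilde u\|+\|\tilde v\|=1$.

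The main obstacle is the last step: converting the sub-eigenvector inequality $(\tilde u,\tilde v)\preccurlyeq\bar\lambda B(\tilde u,\tilde v)$ with $\bar\lambda<\lambda_1$ into a contradiction. If one wishes to avoid invoking spectral radius machinery, an alternative is to select the strictly negative eigenfunction pair $(\phi,\psi)=\lambda_1 B(\phi,\psi)$ produced in the proof of \cref{pro:SKEP}, define $\delta_{*}:=\inf\{\delta>0:(\tilde u,\tilde v)\preccurlyeq\delta(\phi,\psi)\}$ (finite because $\phi,\psi$ are strictly negative in $\Omega$ and comparable to a multiple of the same boundary-vanishing profile as $\tilde u,\tilde v$), and locate a touching point where the strong comparison principle combined with $\bar\lambda<\lambda_1$ forces $\delta_{*}=0$, again contradicting $\|\tilde u\|+\|\tilde v\|=1$.
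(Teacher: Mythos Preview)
Your normalization in Steps 1--2 is fine and parallels the paper's own scaling. The real divergence is in Step 3, and there your primary argument has a genuine gap. You invoke a spectral radius $r(B)=\lambda_1^{-1}$ for the \emph{nonlinear} (positively $1$-homogeneous) operator $B$ and conclude $(\bar\lambda B)^{n}(\tilde u,\tilde v)\to(0,0)$. Neither \cref{lem:cco} nor \cref{pro:SKEP} gives a spectral-radius characterization; they only give existence and uniqueness of the eigenvalue. More importantly, the very chain you derived, $(\tilde u,\tilde v)\preccurlyeq(\bar\lambda B)^{n}(\tilde u,\tilde v)$, says the iterates are \emph{more negative} than $(\tilde u,\tilde v)$, hence $\|(\bar\lambda B)^{n}(\tilde u,\tilde v)\|\ge\|(\tilde u,\tilde v)\|=1$ for all $n$; so the iterates do not tend to zero, and you cannot close the argument this way without an independent upper bound on $\|B^{n}\|_{M}$, which is exactly the missing nonlinear Krein--Rutman input.

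Your alternative via the eigenpair $(\phi,\psi)$ is the right repair and does close: from $(\tilde u,\tilde v)\preccurlyeq\delta(\phi,\psi)$ one gets, by monotonicity, homogeneity, and $(\phi,\psi)=\lambda_1 B(\phi,\psi)$, the improved bound $(\tilde u,\tilde v)\preccurlyeq(\bar\lambda/\lambda_1)\delta(\phi,\psi)$, forcing $\delta_{*}=0$ and hence $(\tilde u,\tilde v)=(0,0)$. What you wave away --- finiteness of $\delta_{*}$ --- is not free in $C^{0}$: you need $|\tilde u|\le C|\phi|$, $|\tilde v|\le C|\psi|$ up to the boundary. This follows because the right-hand sides $f(u_n,v_n)/M_n^{k}$ are uniformly bounded, so the $\tilde u_n,\tilde v_n$ obey a uniform $C^{0,1}$ estimate and the limits are Lipschitz, while $(\phi,\psi)$ satisfies a Hopf-type lower bound $|\phi|,|\psi|\ge c\,\mathrm{dist}(\cdot,\partial\Omega)$. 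You should state this explicitly.

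For comparison, the paper takes a different, more PDE-flavoured route: after the same normalization it passes to a limit $(w,z)$ that is a \emph{super}-solution of the coupled eigenvalue-type system with parameter strictly below $\lambda_1$, pairs it with a large multiple of the eigenpair $(\phi,\psi)$ as a \emph{sub}-solution, and invokes the sub--super solution method to produce a nontrivial solution of that system; this contradicts the uniqueness of $\lambda_1$ from \cref{pro:SKEP}. Your (repaired) eigenfunction comparison and the paper's sub--super construction are two sides of the same coin --- both ultimately feed into the simplicity of $\lambda_1$ --- but the paper's version avoids any operator-theoretic machinery at the cost of needing an existence result between ordered sub/super pairs for the system.
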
   

\begin{proof}
We define that
    \begin{center}
    	$F(u,v)=\int_{0}^{u}f(s,v)~ds,~G(u,v)=\int_{0}^{v}g(u,t)~dt$
    \end{center}
Then, by the condition, we have
    \begin{equation*}
    	\begin{aligned}
    		&F(u,v)\le C_{1}+(1-\theta_{1})\lambda_{1}^{k}\vert v\vert ^{k}\vert u\vert  \\
    		&G(u,v)\le C_{2}+(1-\theta_{2}\lambda_{1}^{k}\vert u\vert ^{k}\vert v\vert .
    	\end{aligned}
    \end{equation*}
Furthermore, we can get that
    \begin{center}
    	$f(u,v)\le (K_{1}-(\lambda_{1}-\theta_{1})v)^{k},\quad g(u,v)\le (K_{2}-(\lambda_{1}-\theta_{2})u)^{k}$
    \end{center}
for some constants $K_{1}=K_{1}(\theta_{1},C_{1})>0$, $K_{2}=K_{2}(\theta_{2},C_{2})>0$. If the conclusion of the proposition is not true, there are sequence $\left\{f_{m}\right\}$ and $\left\{g_{m}\right\}$ satisfying the  inequalities above such that the \cref{eq:CSK3} with $f=f_{m}$, $g=g_{m}$ has a solution $(u_{m},v_{m})$ with
    \begin{center}
    	$M_{m}=\underset{\Omega}{sup}~(\vert u_{m}\vert +\vert v_{m}\vert )\rightarrow\infty$ as $m\rightarrow\infty$.
    \end{center}
Let $w_{m}=u_{m}/M_{m},~z_{m}=v_{m}/M_{m}$, then we have
    \begin{equation}\label{eq:CSK4}
    	\begin{cases}
    		S_{k}(D^{2}w_{m})=\frac{f_{m}(u_{m},v_{m})}{M_{m}^{k}}\le \frac{(K_{1}-(\lambda_{1}-\theta_{1})v_{m})^{k}}{M_{m}^{k} }\qquad&in\quad\Omega\\ 
    		S_{k}(D^{2}z_{m})=\frac{g_{m}(u_{m},v_{m})}{M_{m}^{k}}\le \frac{(K_{2}-(\lambda_{1}-\theta_{2})u_{m})^{k}}{M_{m}^{k} }\qquad&in\quad\Omega\\ 
    		w_{m}=z_{m}=0 \qquad& on \quad\partial\Omega
    	\end{cases},
    \end{equation}
and $w_{m}\rightarrow w$, $z_{m}\rightarrow z$. By the weak convergence of the Hessian measure, $(w,z)$ is a super-solution of 
    \begin{equation}\label{eq:CSK5}
    	\begin{cases}
    		S_{k}(D^{2}u)= \vert C(\lambda_{1}-\theta_{1})v\vert^{k}\qquad&in\quad\Omega\\ 
    		S_{k}(D^{2}v)= \vert C'(\lambda_{1}-\theta_{2})u\vert^{k}\qquad&in\quad\Omega\\ 
    		u=v=0 \qquad& on \quad\partial\Omega
    	\end{cases}
    \end{equation}
Furthermore, we have $C,~C'\le 1$ by the definition of $M_{m}$. Let $a,~b>1$ be sufficiently large such that $u=a\phi <w$, $v=b\psi <z$, where $(\phi,\psi)$ is the solution of \cref{eq:SKSE}. Then $(u,v)$ and $(w,z)$ are, respectively, a sub-solution and a super-solution of \cref{eq:CSK5}. It follows that there is a solution $(\phi _{1},\psi_{1})$ of \cref{eq:CSK5} satisfying $u\le \phi_{1}\le w$, $v\le \psi_{1}\le z$. However, this is in conflict with the uniqueness of the first eigenvalue.
\end{proof} 

\begin{prop}  \label{pro:priori2}
Consider 
    \begin{equation}\label{eq:CSK6}
    	\begin{cases}
    		S_{k}(D^{2}u)=f(x,u,v)\qquad&in \quad\Omega\\ 
    		S_{k}(D^{2}v)=g(x,u,v)\qquad&in \quad\Omega\\ 
    		u=v=0\qquad& on\quad\partial\Omega
    	\end{cases}
    \end{equation}
if there exist non-decreasing functions $\Phi(s,t)\ge 0$, $\Psi(s,t)\ge 0$ such that
    \begin{center}
    	$f(x,s,t)\ge \Phi(s,t)\quad \forall x\in \Omega,~s,~t\le 0$,\quad$g(x,s,t)\ge \Psi(s,t)\quad \forall x\in \Omega,~s,~t\le 0$,
    \end{center}
and
    \begin{equation}\label{eq:CSK7}
    	\underset{\vert s\vert +\vert t\vert \rightarrow\infty}{lim}\Phi (s,t)/\vert t\vert ^{k}=+\infty,\quad\underset{\vert s\vert +\vert t\vert \rightarrow\infty}{lim}\Psi (s,t)/\vert s\vert ^{k}=+\infty.
    \end{equation}
Then let $(u,v)\in C^{0,1}(\overline{\Omega})\times C^{0,1}(\overline{\Omega})$ be an admissible solution of \cref{eq:CSK6}, we have $\Vert u\Vert_{0}+\Vert v\Vert_{0}\le M$, where $M$ depends only on $\Phi,~\Psi,~k,~\Omega$.
\end{prop}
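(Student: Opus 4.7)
The plan is to adapt the scalar a-priori estimate \cref{lem:Apriori} to the coupled setting, using the system eigenvalue uniqueness \cref{coro:SKSE} in place of its scalar counterpart. Argue by contradiction: if the bound fails, there exist sequences $\{f_m\},\{g_m\}$ satisfying the pointwise lower bounds $f_m \ge \Phi$, $g_m \ge \Psi$ together with admissible pairs $(u_m,v_m)$ of the corresponding \cref{eq:CSK6} such that $M_m := \|u_m\|_0 + \|v_m\|_0 \to +\infty$.

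Fix $0\in\Omega$ and let $\Omega^t := t\Omega \Subset \Omega$ for $t\in(0,1)$. Because the first eigenvalue of the $k$-Hessian scales as $\lambda_1(\Omega^t) = t^{-2}\lambda_1(\Omega)$, one can pick $\bar\lambda > \lambda_1(\Omega)$ and $t_0 \in (0,1)$ small enough that $\lambda_1(\Omega^{t_0}) > \bar\lambda$. After passing to a subsequence, I claim $|u_m(x)|+|v_m(x)| \to +\infty$ uniformly on $\overline{\Omega^{t_0}}$: this is the main technical obstacle and mirrors the implicit assertion in the proof of \cref{lem:Apriori}. Concretely, a $k$-convex function on $\Omega$ vanishing on $\partial\Omega$ with large $C^0$-norm must also have large absolute value on every compactly contained subdomain, which can be extracted from a convex-envelope comparison with a radial barrier. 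Granted this uniform divergence, the coercivity assumption \eqref{eq:CSK7} implies, for $m$ large,
\begin{equation*}
S_k(D^2 u_m) \ge \Phi(u_m,v_m) \ge \bar\lambda^k |v_m|^k, \qquad S_k(D^2 v_m) \ge \Psi(u_m,v_m) \ge \bar\lambda^k |u_m|^k \quad \text{in } \Omega^{t_0}.
\end{equation*}

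To conclude, let $(\phi,\psi)$ be the first eigenpair of the coupled eigenvalue problem \cref{eq:SKSE} on $\Omega^{t_0}$. Rescaling by a sufficiently small positive constant so that $\phi > u_m$ and $\psi > v_m$ on $\overline{\Omega^{t_0}}$, the pair $(\phi,\psi)$ provides a super-solution and $(u_m,v_m)$ a sub-solution (using \cref{lem:SKcp}) of the Dirichlet system
\begin{equation*}
S_k(D^2 u) = \bar\lambda^k |v|^k,\quad S_k(D^2 v) = \bar\lambda^k |u|^k \text{ in } \Omega^{t_0}, \qquad u=v=0 \text{ on } \partial\Omega^{t_0}.
\end{equation*}
A standard monotone iteration between these ordered barriers produces a nontrivial admissible solution of this system, which by \cref{coro:SKSE} forces $\bar\lambda = \lambda_1(\Omega^{t_0})$, contradicting $\lambda_1(\Omega^{t_0}) > \bar\lambda$. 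Apart from the interior-blow-up step flagged above, every ingredient is already present in \cref{lem:Apriori} and in the coupled eigenvalue analysis of \cref{pro:SKEP} and \cref{coro:SKSE}.
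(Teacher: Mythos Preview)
Your argument follows the paper's line for line: contradiction via a blow-up sequence, restriction to a rescaled subdomain $\Omega^{t_0}$, the coercivity \eqref{eq:CSK7} to make $(u_m,v_m)$ a sub-solution of the homogeneous coupled problem at level $\bar\lambda$, the system eigenpair as the companion barrier, and the contradiction with \cref{coro:SKSE}. You also correctly flag the uniform interior divergence $|u_m|+|v_m|\to\infty$ on $\overline{\Omega^{t_0}}$ as the delicate step; the paper asserts it in exactly the same way.

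There is, however, one genuine slip in the choice of inequality. You fix $\bar\lambda>\lambda_1(\Omega)$ first and then take $t_0$ small so that $\lambda_1(\Omega^{t_0})>\bar\lambda$ (mirroring the statement of the scalar \cref{lem:Apriori}). But with this direction the eigenpair $(\phi,\psi)$ on $\Omega^{t_0}$ satisfies
\[
S_k(D^2\phi)=\lambda_1(\Omega^{t_0})^k\,|\psi|^k>\bar\lambda^k|\psi|^k,
\]
so $(\phi,\psi)$ is a \emph{sub}-solution, not a super-solution, of the target system $S_k(D^2u)=\bar\lambda^k|v|^k$, $S_k(D^2v)=\bar\lambda^k|u|^k$. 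With two sub-solutions and no upper barrier the monotone iteration may collapse to the trivial pair, and no contradiction follows. The paper's proof of this proposition makes the opposite choice: since the limits in \eqref{eq:CSK7} are $+\infty$, one can first fix $\tau\in(0,1)$ (small enough for the interior divergence) and then choose $\bar\lambda$ so large that $\lambda_1(\Omega^{\tau})=\tau^{-2}\lambda_1(\Omega)<\bar\lambda$. With $\lambda_1(\Omega^{\tau})<\bar\lambda$ the rescaled eigenpair is a genuine super-solution, the iteration is trapped between $(u_j,v_j)$ and $(\phi_\tau,\psi_\tau)$, and the resulting nontrivial solution contradicts \cref{coro:SKSE}. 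Once you reverse that single inequality, your proof coincides with the paper's.
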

    
\begin{proof} For any fixed $\tau>0$, denote $\Omega^{\tau}=\left\{\tau x:x\in \Omega\right\}$. By the uniqueness of eigenvalues we have $\lambda_{1}(\Omega^{\tau})=\tau^{-2}\lambda_{1}(\Omega)$. Without loss of generality we may suppose the origin $O\in \Omega$, which implies $\Omega^{t}\subset \Omega$ for any $\tau<1$.

If the conclusion of the proposition is not true, then there exists a sequence of functions $f_{j}(x,u,v),~g_{j}(x,u,v)$ with 
    \begin{center}
    	$f_{j}(x,s,t)\ge \Phi(s,t)\quad \forall x\in \Omega,~s,~t\le 0$,\quad$g_{j}(x,s,t)\ge \Psi(s,t)\quad \forall x\in \Omega,~s,~t\le 0$,
    \end{center}
so that the sequence of solutions $(u_{j},v_{j})$ of 
    \begin{center}
    	$\begin{cases}
    		S_{k}(D^{2}u_{j})=f_{j}(x,u_{j},v_{j})\qquad&in \quad\Omega\\ 
    		S_{k}(D^{2}v_{j})=g_{j}(x,u_{j},v_{j})\qquad&in \quad\Omega\\ 
    		u_{j}=v_{j}=0\qquad& on\quad\partial\Omega
    	\end{cases}$
    \end{center}
satisfies $M_{j}=\Vert u_{j}\Vert+\Vert v_{j}\Vert=-(\underset{{x\in \Omega}}{inf}u_{j}+\underset{x\in \Omega}{inf}v_{j})\rightarrow+\infty$. From \cref{eq:CSK7} there exist $\overline{\lambda}>>\lambda_{1}$ and $\tau \in (0,1)$ such that $\underset{\vert s\vert +\vert t\vert \rightarrow\infty}{lim}\Phi (s,t)/(\vert s\vert +\vert t\vert) ^{k}\ge \overline{\lambda}^{k}$, $\underset{\vert s\vert +\vert t\vert \rightarrow\infty}{lim}\Psi (s,t)/(\vert s\vert +\vert t\vert) ^{k}\ge \overline{\lambda}^{k}$ and $\lambda_{1}(\Omega^{\tau})=\tau^{-2}\lambda_{1}(\Omega)<\overline{\lambda}$. Since $u_{j}(x)+v_{j}(x)\rightarrow-\infty$ uniformly in $\Omega^{\tau}$, we have 
    \begin{center}
    	$\begin{cases}
    		S_{k}(D^{2}u_{j})\ge \vert \overline{\lambda} v_{j}\vert^{k}\qquad&in \quad\Omega^{\tau}\\ 
    		S_{k}(D^{2}v_{j})\ge \vert \overline{\lambda} u_{j}\vert^{K}\qquad&in \quad\Omega^{\tau}
    	\end{cases}$
    \end{center}
provided $j$ is sufficient large. Fix such a $j$. Let $(\phi_{\tau},\psi_{\tau})$ be the eigen-function of the $k$-Hessian operator on $\Omega^{\tau}$. Replacing $(\phi_{\tau},\psi_{\tau})$ by $\ell(\phi_{\tau},\psi_{\tau})$ for some small $\ell$ we may suppose $\phi_{\tau}(x)>u_{j}(x)$, $\psi_{\tau}(x)>v_{j}(x)$ in $\Omega^{t}$. Thus $(\phi_{\tau},\psi_{\tau})$ and $(u_{j}(x),v_{j}(x))$ are the super-solution and sub-solution of the Dirichlet problem
    \begin{equation}\label{eq:CSK8}
    	\begin{cases}
    		S_{k}(D^{2}u_{j})=\vert \overline{\lambda} v_{j}\vert^{k}\qquad&in \quad\Omega^{\tau}\\ 
    		S_{k}(D^{2}v_{j})=\vert \overline{\lambda} u_{j}\vert^{k}\qquad&in \quad\Omega^{\tau}\\ 
    		u_{j}=v_{j}=0\qquad& on\quad\partial\Omega^{\tau}
    	\end{cases}
    \end{equation}
respectively. Therefore this is a solution $(u,v)$ of \cref{eq:CSK8} which satisfies $\phi_{\tau}(x)\ge u(x)\ge u_{j}(x)$ and $\psi_{\tau}(x)\ge v(x)\ge v_{j}(x)$. This means both $\overline{\lambda}$ and $\tau^{-2}\lambda_{1}$ are the eigenvalues of the $k$-Hessian operator on $\Omega^{\tau}$, which is contradictory to the uniqueness of eigenvalues.
\end{proof}

\subsection{Proof of \cref{thm:SKS1}}
In the following, we prove \cref{thm:SKS1}.

\begin{theorem}[\cref{thm:SKS1}]
    Let $g_{0}=\underset{\vert s+t\vert\rightarrow0}{lim}\frac{g(s,t)}{\vert t\vert}$, $h_{0}=\underset{\vert s+t\vert \rightarrow0}{lim}\frac{h(s,t)}{\vert s\vert }$, and $g_{\infty}=\underset{\vert s+t\vert\rightarrow+\infty}{lim}\frac{g(s,t)}{\vert t\vert}$,
    $h_{\infty}=\underset{\vert s+t\vert \rightarrow+\infty}{lim}\frac{h(s,t)}{\vert s\vert}$, then\\
    (1) if $f_{0}:=g_{0}=h_{0}\in (0,+\infty)$, $f_{\infty}:=g_{\infty}=h_{\infty}\in (0,\infty)$, and $g_{0}\ne g_{\infty}$ then \cref{eq:SKS} has at least one $k$-admissible solution for $\lambda\in (min\left\{\frac{\lambda_{0}}{f_{0}},\frac{\lambda_{0}}{f_{\infty}}\right\},max\left\{\frac{\lambda_{0}}{f_{0}},\frac{\lambda_{0}}{f_{\infty}}\right\})$.\\
    (2) if $f_{0}:=g_{0}=h_{0}\in (0,+\infty)$, $f_{\infty}:=g_{\infty}=h_{\infty}=0$, then  \cref{eq:SKS} has at least one nontrivial solution for every $\lambda\in (\lambda_{0}/f_{0},+\infty)$.\\
    (3) if $f_{0}:=g_{0}=h_{0}\in (0,+\infty)$, $f_{\infty}:=g_{\infty}=h_{\infty}=+\infty$, then  \cref{eq:SKS} has at least one nontrivial solution for every $\lambda\in (0,\lambda_{0}/f_{0})$.\\
    (4) if $f_{0}:=g_{0}=h_{0}=0$, $f_{\infty}:=g_{\infty}=h_{\infty}\in (0,+\infty)$, then  \cref{eq:SKS} has at least one nontrivial solution for every $\lambda\in (\lambda_{0}/f_{\infty},+\infty)$.\\
    (5) if $f_{0}:=g_{0}=h_{0}=0$, $f_{\infty}:=g_{\infty}=h_{\infty}=\infty$, then  \cref{eq:SKS} has at least one nontrivial solution for every $\lambda\in (0,+\infty)$.\\
    (6) if $f_{0}:=g_{0}=h_{0}=+\infty$, $f_{\infty}:=g_{\infty}=h_{\infty}\in (0,+\infty)$, then  \cref{eq:SKS} has at least one nontrivial solution for every $\lambda\in (0,\lambda_{0}/f_{\infty})$.\\
    (7) if $f_{0}:=g_{0}=h_{0}=+\infty$, $f_{\infty}:=g_{\infty}=h_{\infty}=0$, then  \cref{eq:SKS} has at least one nontrivial solution for every $\lambda\in (0,+\infty)$.  
\end{theorem}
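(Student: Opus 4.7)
The plan is to mirror the strategy used for \cref{thm:SK1} in the scalar case, but now exploit the bifurcation machinery for systems built in \cref{prop:SKEBifur1,prop:SKEBifur2}, together with the a-priori estimates in \cref{prop:priori,pro:priori2}. Since $\mu = g_{0}=h_{0}$ and $\nu = g_{\infty}=h_{\infty}$, I would first decompose
\begin{equation*}
g(s,t) = \mu |t|^{k} + f_{1}(s,t), \qquad h(s,t) = \mu |s|^{k} + f_{2}(s,t),
\end{equation*}
where $f_{1},f_{2} = o((|s|+|t|)^{k})$ as $|s|+|t|\to 0$, and analogously with $\nu$ at infinity. After an appropriate rescaling of $\lambda$, \cref{prop:SKEBifur1} produces an unbounded continuum $\mathcal{C}$ of $k$-admissible solutions of \cref{eq:SKS} emanating from $(\lambda_{1}/\mu,(0,0))$, and \cref{prop:SKEBifur2} identifies $\lambda_{1}/\nu$ as the unique asymptotic bifurcation value.

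In case (1) with $\mu,\nu \in (0,+\infty)$, \cref{prop:priori} keeps $\lambda$ bounded along $\mathcal{C}$, so $\mathcal{C}$ must be unbounded in the $E$-direction. Normalizing a blow-up sequence $(\lambda_{n},(u_{n},v_{n}))$ by $\|u_{n}\|+\|v_{n}\|$, extracting a convergent subsequence via the complete continuity of the solution operators, and passing to the limit as in the scalar proof, the limit pair solves the linearized eigenvalue system of \cref{coro:SKSE}. Uniqueness of the first eigenvalue then forces $\lim\lambda_{n} = \lambda_{1}/\nu$, so $\mathcal{C}$ links $(\lambda_{1}/\mu,(0,0))$ with $(\lambda_{1}/\nu,\infty)$ and thus crosses every $\lambda$ in the claimed interval. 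Cases (2) and (4) are the degenerate situations where one of $\mu,\nu$ vanishes, which makes the corresponding endpoint in $\lambda$ recede to $+\infty$ and the continuum unbounded in the $\lambda$-direction. For case (3) with $\nu=+\infty$, \cref{pro:priori2} applied with $\Phi=g$ and $\Psi=h$ rules out blow-up at any interior $\lambda>0$, so the continuum from $(\lambda_{1}/\mu,(0,0))$ is forced to approach $(0,\infty)$.

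For cases (5)--(7), where $\mu \in \{0,+\infty\}$, I would regularize: for each $n$, define
\begin{equation*}
g_{n}(s,t) =
\begin{cases}
\tfrac{1}{n}|t| + \widetilde g_{n}(s,t), & |s|+|t| \leq \tfrac{1}{n},\\
g(s,t), & |s|+|t| \geq \tfrac{2}{n},
\end{cases}
\end{equation*}
and analogously $h_{n}$, with $\widetilde g_{n},\widetilde h_{n} = o(|s|+|t|)$; for the case $\mu = +\infty$ replace $\tfrac{1}{n}$ by $n$. Each $(g_{n},h_{n})$ falls into one of the finite-$\mu$ cases treated above, producing continua $\mathcal{C}_{n}$ linking $(n\lambda_{1},(0,0))$ (or $(\lambda_{1}/n,(0,0))$) to the appropriate asymptotic point. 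Using the complete continuity of the solution operator $H$ (\cref{lem:HB}) and the superior-limit result of Lemma~2.5 in \cite{dai_two_2016}, I would pass to the limit and extract an unbounded connected component $\mathcal{C}$ for \cref{eq:SKS} that covers the claimed $\lambda$-interval.

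The principal obstacle is twofold: first, verifying the perturbation hypotheses $(A_{1})$--$(A_{3})$ for the remainder terms $f_{1},f_{2}$ at both $0$ and $\infty$, where care is needed because $g_{0},h_{0},g_{\infty},h_{\infty}$ are two-variable limits and the sign/nondegeneracy condition in $(A_{1})$ must be teased out of the hypothesis that $g,h$ vanish only on the two coordinate axes; second, replacing the scalar strong maximum principle argument that excluded interior blow-up in \cref{thm:SK1} by a vector-valued comparison against the eigenpair $(\phi,\psi)$ of \cref{coro:SKSE}, performed coordinatewise with respect to the partial order induced by the cone $M\subset E$ and leveraging the monotonicity and homogeneity of $B$ recorded in \cref{lem:HB}, exactly in the spirit of \cref{lem:deg neq}.
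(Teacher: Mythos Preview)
Your proposal follows essentially the same route as the paper: bifurcation from \cref{prop:SKEBifur1}, identification of the asymptotic value via \cref{prop:SKEBifur2} and \cref{coro:SKSE}, and regularization plus the limsup lemma of \cite{dai_two_2016} for the cases $\mu\in\{0,+\infty\}$. Two small corrections are in order. First, your decomposition $g(s,t)=\mu|t|^{k}+f_{1}(s,t)$ is a slip: since $g_{0}=\lim g/|t|$, the correct expansion is $g^{k}(s,t)=\mu^{k}|t|^{k}+\overline g(s,t)$ (as the paper writes), and this is what feeds into the auxiliary problem \cref{eq:SKaux}. Second, in case~(1) you invoke \cref{prop:priori} to bound $\lambda$ along $\mathcal{C}$, but that proposition only applies when $\lambda^{k}\nu^{k}<\lambda_{1}^{k}$ and therefore cannot by itself exclude $\lambda_{n}\to\infty$; the paper instead combines \cref{pro:priori2} (which does apply once $\lambda_{n}^{k}g^{k}/|v_{n}|^{k}\to\infty$) with exactly the coordinatewise strong-maximum-principle comparison against the eigenpair that you anticipate in your ``obstacles'' paragraph. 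Once you make that substitution, your outline coincides with the paper's argument.
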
 
  
\begin{proof}
(1) Let $\overline{g}(s,t)$,~$\overline{h}(s,t)$ satisfy
    $\underset{\vert s\vert +\vert t\vert \rightarrow0}{lim}\frac{\overline{g}(s,t)}{(\vert s\vert +\vert t\vert )^{k}}=0$ and $\underset{\vert s\vert +\vert t\vert \rightarrow0}{lim}\frac{\overline{h}(s,t)}{(\vert s\vert +\vert t\vert )^{k}}=0$ such that
    \begin{equation*}
    	\begin{aligned}
    		&g^{k}(s,t)=g_{0}^{k} \vert t\vert ^{k}+\overline{g}(s,t),\\
    		&h^{k}(s,t)=h_{0}^{k} \vert t\vert ^{k}+\overline{h}(s,t).
    	\end{aligned}
    \end{equation*}
Then \cref{eq:CSK1} is equivalent to
    \begin{equation}\label{eq:CSK9}
    	\begin{cases}
    		S_{k}(D^{2}u)=\lambda^{k}(f_{0}^{k} \vert v\vert ^{k}+\overline{g}(-u,-v))\qquad&in \quad\Omega \\ 
    		S_{k}(D^{2}v)=\lambda^{k}(g_{0}^{k} \vert u\vert ^{k}+\overline{h}(-u,-v))\qquad&in \quad\Omega \\ 
    		u=v=0\qquad&on \quad\partial\Omega
    	\end{cases}
    \end{equation}
By \cref{prop:SKEBifur1}, there exists an unbounded continuum $\mathcal{C}$ solutions of  \cref{eq:CSK9} emanating from $(\lambda_{0}/ f_{0},(0,0))$ such that $\mathcal{C}\subset(\mathbb{R}\times E)\cup \left\{(\lambda_{0}/ f_{0},(0,0))\right\}$. Next, we show that $\mathcal{C}$ links $(\lambda_{0}/ f_{0},(0,0))$ to $(\lambda_{0}/ f_{\infty},\infty)$ in $\mathbb{R}\times E$. Let $(\lambda_{n},(u_{n},v_{n}))\in \mathcal{C}$ satisfy
    \begin{center}
    	$\lambda_{n}+\Vert u_{n}\Vert +\Vert v_{n}\Vert \rightarrow +\infty$
    \end{center}
as $n\rightarrow+\infty$. Since $(0,0)$ is the only solution of \eqref{eq:h} for $\lambda=0$, then we have $\mathcal{C}\cap (\left\{0\right\}\times E)=\emptyset$. So we have $\lambda_{n}>0$ for all $n\in \mathbb{N}$.

We claim that there exists a constant $M>0$ such that 
    \begin{center}
    	$\lambda_{n}\in \left(0,M\right]$
    \end{center}
for $n$ large enough.

In fact, if we stand on the contrary, supposing that $\underset{n\rightarrow+\infty}{lim}\lambda_{n}=+\infty$, since $f_{0},~f_{\infty}\in (0,+\infty)$, there exists a positive constant $\sigma$ such that
    \begin{center}
    	$g^{k}(-u_{n},-v_{n})/\vert v_{n}\vert^{k} \ge \sigma,\quad h^{k}(-u_{n},-v_{n})/\vert u_{n}\vert^{k} \ge \sigma~for ~any ~n\in \mathbb{N}$.
    \end{center}
So we have $\underset{n\rightarrow+\infty}{lim}\frac{\lambda_{n}^{k}g^{k}(-u_{n},-v_{n})}{\vert v_{n}\vert ^{k}}=\underset{n\rightarrow+\infty}{lim}\frac{\lambda_{n}^{k}h^{k}(-u_{n},-v_{n})}{\vert u_{n}\vert ^{k}}=+\infty$, by \cref{pro:priori2} if necessary, for $n$ large we have $\Vert u_n\Vert +\Vert v_{n}\Vert\le C$ where $C>0$ is a constant and $\frac{\lambda_{n}^{k}g^{k}(-u_{n},-v_{n})}{\vert v_{n}\vert ^{k}}>\overline{\lambda}^{k}>\lambda_{1}^{k}$ and $\frac{\lambda_{n}^{k}g^{k}(-u_{n},-v_{n})}{\vert u_{n}\vert ^{k}}>\overline{\lambda}^{k}>\lambda_{1}^{k}$ for some constant $\overline{\lambda}> \lambda_{1}$. Let $(w,z)$ satisfies 
    \begin{center}
    	$\begin{cases}
    		S_{k}(D^{2}w)=\lambda_{1}^{k}\vert z\vert ^{k}\qquad &in \quad\Omega\\ 
    		S_{k}(D^{2}z)=\lambda_{1}^{k}\vert w\vert ^{k}\qquad &in \quad\Omega\\ 
    		w=z=0\qquad&on\quad\partial\Omega
    	\end{cases}$
    \end{center}
By scaling if necessary, for $n$ large enough, we can assume $u_{n}(x)<w(x)$ and $v_{n}(x)<z(x)$ for all $x\in \Omega$. Let $\delta^{*},\delta^{**}>0$ be the maximal such that $u_{n}-\delta^{*}w\le 0$, $v_{n}-\delta^{**}z\le 0$ in $\Omega$. Let $\omega=\delta^{*}w$ and $\tau =\delta^{**}z$, then
    \begin{center}
    	$\begin{cases}
    		L_{k}(u_{n}-\delta^{*}w)\ge F_{k}(D^{2}u_{n})-F_{k}(D^{2}\omega)
    		=\lambda_{n}f(-u_{n},-v_{n})-(\lambda_{0}\delta^{*}\vert z\vert)
    		\ge (\overline{\lambda}\vert v_{n}\vert)-(\lambda_{0}\delta^{*}\vert z\vert)\ge 0\\
    		L_{k}(v_{n}-\delta^{**}z)\ge F_{k}(D^{2}v_{n})-F_{k}(D^{2}\tau)
    		=\lambda_{n}g(-u_{n},-v_{n})-(\lambda_{0}\delta^{**}\vert w\vert)
    		\ge (\overline{\lambda}\vert u_{n}\vert)-(\lambda_{0}\delta^{**}\vert w\vert)\ge 0
    	\end{cases}$
    \end{center}
since $\overline{\lambda}>\lambda_{1}$ and $0<\delta^{*}\vert w\vert\le \vert u_{n}\vert $, $0<\delta^{**}\vert z\vert\le \vert v_{n}\vert $ for all $x\in \Omega$. That implies, by strong maximum principle, $u_{n}(x)=\delta^{*}w$ and $v_{n}(x)=\delta^{*}z$ for all $x\in \Omega$. Therefore,
    \begin{center}
    	$\begin{cases}
    		S_{k}(D^{2}u_{n})=S_{k}(D^{2}\omega)\\ 
    		S_{k}(D^{2}v_{n})=S_{k}(D^{2}\tau)
    	\end{cases}$
    \end{center}  
or equivalently,
    \begin{center}
    	$\begin{cases}
    		\lambda_{1}^{k}\vert u_{n}\vert ^{k}=\lambda_{1}^{k}\vert \delta^{*}w\vert ^{k}=\lambda_{n}^{k}f^{k}(-u_{n},-v_{n})>\lambda_{1}^{k}\vert u_{n}\vert^{k}\\ 
    		\lambda_{1}^{k}\vert v_{n}\vert ^{k}=\lambda_{1}^{k}\vert \delta^{**}z\vert ^{k}=\lambda_{n}^{k}g^{k}(-u_{n},-v_{n})> \lambda_{1}^{k}\vert v_{n}\vert^{k}
    	\end{cases}$
    \end{center}
That is impossible, so only $\Vert u_{n}\Vert+\Vert v_{n}\Vert\rightarrow+\infty$ as $n\rightarrow+\infty$.

Let $\hat{g}(s,t),~\hat{h}(s,t)$ be such that 
    \begin{center}
    	$\begin{cases}
    		g^{k}(s,t)=g_{\infty}^{k}\vert t\vert^{k}+\hat{g}(s,t)\\
    		h^{k}(s,t)=h_{\infty}^{k}\vert s\vert^{k}+\hat{h}(s,t)
    	\end{cases}	$
    \end{center}
with $\underset{\vert s\vert +\vert t\vert \rightarrow+\infty}{lim}\frac{\hat{g}(s,t)}{(\vert s\vert +\vert t\vert )^{k}}=0,~\underset{\vert s\vert +\vert t\vert \rightarrow+\infty}{lim}\frac{\hat{h}(s,t)}{(\vert s\vert +\vert t\vert )^{k}}=0$. For $\Vert u_{n}\Vert+\Vert v_{n} \Vert \rightarrow+\infty$, we consider the following equation
    \begin{center}
    	$\begin{cases}
    		S_{k}(D^{2}u_{n})=\lambda_{n}^{k}(g_{\infty}^{k}\vert v_{n}\vert ^{k}+\hat{g}(-u_{n},-v_{n}))\qquad &in \quad\Omega\\ 
    		S_{k}(D^{2}v_{n})=\lambda_{n}^{k}(h_{\infty}^{k}\vert u_{n}\vert ^{k}+\hat{h}(-u_{n},-v_{n}))\qquad &in \quad\Omega\\ 
    		u_{n}=v_{n}=0\qquad&on\quad\partial\Omega
    	\end{cases}$
    \end{center}
Let $w_{n}=\frac{u_{n}}{\Vert u_{n}\Vert+\Vert v_{n}\Vert }$ and $z_{n}=\frac{v_{n}}{\Vert u_{n}\Vert+\Vert v_{n}\Vert }$. Then $(w_{n},z_{n})$ is the admissible solution to
    \begin{center}
    	$\begin{cases}
    		S_{k}(D^{2}w)=\lambda_{n}^{k}(g_{\infty}^{k}\vert z\vert ^{k}+\frac{\hat{g}(-u_{n},-v_{n})}{	(\Vert u_{n}\Vert+\Vert v_{n}\Vert)^{k}})\qquad &in \quad\Omega\\ 
    		S_{k}(D^{2}z)=\lambda_{n}^{k}(h_{\infty}^{k}\vert w\vert ^{k}+\frac{\hat{h}(-u_{n},-v_{n})}{	(\Vert u_{n}\Vert+\Vert v_{n}\Vert)^{k}})\qquad &in \quad\Omega\\ 
    		w=z=0\qquad&on\quad\partial\Omega
    	\end{cases}$
    \end{center}
By the same argument as in \cref{prop:SKEBifur2}, we have
    \begin{center}
    	$\frac{\hat{g}(-u_{n},-v_{n})}{	(\Vert u_{n}\Vert+\Vert v_{n}\Vert)^{k}}\rightarrow 0,\quad\frac{\hat{h}(-u_{n},-v_{n})}{	(\Vert u_{n}\Vert+\Vert v_{n}\Vert)^{k}}\rightarrow0$,
    \end{center}
as $n\rightarrow+\infty$. Since $w_{n}$ and $z_{n}$ are bounded, after taking a sub-sequence if necessary, we have that $w_{n}\rightarrow w$ and $z_{n}\rightarrow z$ in $C(\overline{\Omega})$ as $n\rightarrow+\infty$ and $(w,z)$ solve the equation 
    \begin{center}
    	$\begin{cases}
    		S_{k}(D^{2}w)=\widetilde{\lambda}^{k}g_{\infty}^{k}\vert z\vert ^{k}\qquad &in \quad\Omega\\ 
    		S_{k}(D^{2}z)=\widetilde{\lambda}^{k}h_{\infty}^{k}\vert w\vert ^{k}\qquad &in \quad\Omega\\ 
    		w=z=0\qquad&on\quad\partial\Omega
    	\end{cases}$
    \end{center}
where $\widetilde{\lambda}=\underset{n\rightarrow}{lim}~\lambda_{n}$, then by \cref{coro:SKSE}, we have $\widetilde{\lambda}=\frac{\lambda_{0}}{f_{\infty}}$. Therefore, $\mathcal{C}$ links $(\lambda_{0}/f_{0},(0,0))$ to $(\lambda_{0}/f_{\infty},\infty)$. So \eqref{eq:h} has at least one admissible solution $(u,v)\in E$ for every 
    \begin{center}
        $\lambda\in (min\{\frac{\lambda_{0}}{f_{0}},\frac{\lambda_{0}}{f_{\infty}}\},max\{\frac{\lambda_{0}}{f_{0}},\frac{\lambda_{0}}{f_{\infty}}\})$. 
    \end{center}
    
(2): Considering the proof of (1), we only need to show that $\mathcal{C}$ joints $(\lambda_{0}/f_{0},(0,0))$ to $(+\infty,+\infty)$. We first show that $\mathcal{C}$ is unbounded in the direction of $E$. Suppose on the contrary, that $\mathcal{C}$ is bounded in the direction of $E$. Then $\mathcal{C}$ is unbounded in the direction of $\lambda$. So there exists $(\lambda_{n},(u_{n},v_{n}))$ and $M>0$ such that $\lambda_{n}\rightarrow+\infty$ as $n\rightarrow+\infty$ and $\Vert u_{n}\Vert +\Vert v_{n}\Vert \le M$ for any $n\in \mathbb{N}$. The sign condition $g(s,t)>0$, $h(s,t)>0$ combined with $f_{0}\in (0,+\infty)$ and $\Vert u_{n}\Vert +\Vert v_{n}\Vert \le M$ implies that there exists a positive constant $\rho$ such that
	\begin{center}
		$\frac{g^{k}(-u_{n},-v_{n})}{\vert v_{n}\vert ^{k}}\ge \rho$ and $\frac{h^{k}(-u_{n},-v_{n})}{\vert u_{n}\vert ^{k}}\ge\rho$.
	\end{center}
So there exist a positive constant $\sigma\ge\rho $ such that 
	\begin{center}
		$g^{k}(-u_{n},-v_{n})/\vert v_{n}\vert^{k} \ge \sigma,\quad h^{k}(-u_{n},-v_{n})/\vert u_{n}\vert^{k} \ge \sigma~for ~any ~n\in \mathbb{N}$.
	\end{center}
So we have $\underset{n\rightarrow+\infty}{lim}\frac{\lambda_{n}^{k}g^{k}(-u_{n},-v_{n})}{\vert v_{n}\vert ^{k}}=\underset{n\rightarrow+\infty}{lim}\frac{\lambda_{n}^{k}h^{k}(-u_{n},-v_{n})}{\vert u_{n}\vert ^{k}}=+\infty$, by \cref{pro:priori2} if necessary, for n large we have $\Vert u_n\Vert +\Vert v_{n}\Vert\le C$ where $C>0$ is a constant and $\frac{\lambda_{n}^{k}g^{k}(-u_{n},-v_{n})}{\vert v_{n}\vert ^{k}}>\overline{\lambda}^{k}>\lambda_{1}^{k}$ and $\frac{\lambda_{n}^{k}h^{k}(-u_{n},-v_{n})}{\vert u_{n}\vert ^{k}}>\overline{\lambda}^{k}> \lambda_{1}^{k}$ for some constant $\overline{\lambda}> \lambda_{1}$ for $n$ large enough. Then, argument as in (1), we will get contradiction. So $\mathcal{C}$ is unbounded in the direction of $E$.

We claim that unique blow up point of $\mathcal{C}$ is $(+\infty,\infty)$.

In fact, otherwise, there must exist a blow up point with $\hat{\lambda}<\infty$ of $\mathcal{C}$. Then there exists a sequence $\left\{(\lambda_{n},(u_{n},v_{n}))\right\}$ such that $\underset{n\rightarrow+\infty}{lim}~\lambda_{n}=\hat{\lambda}$ and $\underset{n\rightarrow+\infty}{lim}(\Vert u_{n}\Vert +\Vert v\Vert )=+\infty$. Let $w_{n}=\frac{u_{n}}{\Vert u_{n}\Vert+\Vert v_{n}\Vert }$ and $z_{n}=\frac{v_{n}}{\Vert u_{n}\Vert+\Vert v_{n}\Vert }$. Then $(w_{n},z_{n})$ satisfy
    \begin{center}
    	$\begin{cases}
    		S_{k}(D^{2}w_{n})=\lambda_{n}^{k}\frac{g^{k}(-u_{n},-v_{n})}{	(\Vert u_{n}\Vert+\Vert v_{n}\Vert)^{k}}\qquad &in \quad\Omega\\ 
    		S_{k}(D^{2}z_{n})=\lambda_{n}^{k}\frac{h^{k}(-u_{n},-v_{n})}{	(\Vert u_{n}\Vert+\Vert v_{n}\Vert)^{k}}\qquad &in \quad\Omega\\ 
    		w_{n}=z_{n}=0\qquad&on\quad\partial\Omega
    	\end{cases}$.
    \end{center}
Similar to that of (1), we can show that $w_{n}\rightarrow w_{0}$ and $z_{n}\rightarrow z_{0}$ as $n\rightarrow +\infty$. Furthermore, $(w_{0},v_{0})$ is the solution of the following problem
    \begin{equation}\label{eq:CSK10}
    	\begin{cases}
    		S_{k}(D^{2}w)=0\qquad &in \quad\Omega\\ 
    		S_{k}(D^{2}z)=0\qquad &in \quad\Omega\\ 
    		w=z=0\qquad&on\quad\partial\Omega
    	\end{cases}.
    \end{equation}
Then, we have $w_{0}=z_{0}\equiv0$, which contradicts $\Vert w_{0}\Vert +\Vert z_{0}\Vert =1$.

(3): Argument as in (1), there exists an unbounded continuum $\mathcal{C}$ of admissible solutions of \cref{eq:CSK1} emanating from $(\lambda_{1}/ h_{0},(0,0))$ such that $\mathcal{C}\subset(\mathbb{R}\times E)\cup \left\{(\lambda_{1}/ h_{0},(0,0))\right\}$.

We claim that $\mathcal{C}$ is bounded in the direction of $\lambda$.

In fact, if we suppose that $\underset{n\rightarrow+\infty}{lim}\lambda_{n}=+\infty$. Since $f_{0},~f_{\infty}\in (0,+\infty)$, there exist a positive constant $\sigma$ such that 
	\begin{center}
		$g^{k}(-u_{n},-v_{n})/\vert v_{n}\vert^{k} \ge \sigma,\quad h^{k}(-u_{n},-v_{n})/\vert u_{n}\vert^{k} \ge \sigma~for ~any ~n\in \mathbb{N}$.
	\end{center}
So we have $\frac{\lambda_{n}^{k}g^{k}(-u_{n},-v_{n})}{\vert v_{n}\vert ^{k}}>\overline{\lambda}^{k}> \lambda_{1}^{k}$ and $\frac{\lambda_{n}^{k}h^{k}(-u_{n},-v_{n})}{\vert u_{n}\vert ^{k}}>\overline{\lambda}^{k}> \lambda_{1}^{k}$ for some constant $\overline{\lambda}> \lambda_{1}$ for $n$ large enough. Let $(w,z)$ satisfies 
	\begin{center}
		$\begin{cases}
			S_{k}(D^{2}w)=\lambda_{1}^{k}\vert z\vert ^{k}\qquad &in \quad\Omega\\ 
			S_{k}(D^{2}z)=\lambda_{1}^{k}\vert w\vert ^{k}\qquad &in \quad\Omega\\ 
			w=z=0\qquad&on\quad\partial\Omega
		\end{cases}$
	\end{center}
By scaling if necessary, for $n$ large enough, we can assume $u_{n}(x)<w(x)$ and $v_{n}(x)<z(x)$ for all $x\in \Omega$. Then as in (1), we would get a contradiction. In other words, $\mathcal{C}$ is unbounded in the direction of $E$. 

We claim that the unique blow up point of $\mathcal{C}$ is $(0,(0,0))$. 

Otherwise, there exists $\lambda_{*}>0$ such that $\left\{(\lambda_{n},(u_{n},v_{n}))\right\}\in \mathcal{C}$ satisfying $\underset{n\rightarrow+\infty}{lim}~\lambda_{n}=\lambda_{*}$ and $\underset{n\rightarrow+\infty}{lim}(\Vert u_{n}\Vert +\Vert v\Vert )=+\infty$. Since $f_{\infty}=g_{\infty}=\infty$ , thus for $n$ large enough we have $\frac{\lambda_{n}^{k}f^{k}(-u_{n},-v_{n})}{\vert v_{n}\vert ^{k}}>\overline{\lambda}^{k}> \lambda_{1}^{k}$ and $\frac{\lambda_{n}^{k}g^{k}(-u_{n},-v_{n})}{\vert u_{n}\vert ^{k}}>\overline{\lambda}^{k}> \lambda_{1}^{k}$ for some constant $\overline{\lambda}> \lambda_{1}$. Then, argument as in above, we would get a contradiction. Therefore, $\mathcal{C}$ links $(\lambda_{1}/ h_{0},(0,0))$ to $(0,\infty)$. So, \cref{eq:CSK1} has at least one admissible solution for every $\lambda\in (0,\frac{\lambda_{0}}{f_{0}})$.

(4): Let $\hat{g}$ and $\hat{h}$ be defined as in (1),then \cref{eq:CSK1} is equal to
    \begin{equation}\label{eq:CSK11}
    	\begin{cases}
    		S_{k}(D^{2}u)=\lambda^{k}(g_{\infty}^{k}\vert u\vert^{k}+\hat{g}(-u,-v))\qquad &in \quad\Omega\\ 
    		S_{k}(D^{2}v)=\lambda^{k}(h_{\infty}^{k}\vert v\vert^{k}+\hat{h}(-u,-v))\qquad &in \quad\Omega\\ 
    		w=z=0	\qquad &on \quad\partial\Omega	
    	\end{cases},
    \end{equation}
By \cref{prop:SKEBifur2}, there exists a continuum of admissible solutions of \cref{eq:CSK11} emanating from $(\lambda_{0}/h_{\infty},\infty)$ such that either it is unbounded in the direction of $\lambda$ or it meets $\left\{(\lambda,(0,0)):\lambda\in \mathbb{R}^{+}\right\}$.

We claim that $\mathcal{C}\cap (\mathbb{R}^{+}\times \left\{(0,0)\right\})=\emptyset$.

Otherwise, there exists a sequence $\left\{(\lambda_{n},(u_{n},v_{n}))\right\}$ such that $\underset{n\rightarrow+\infty}{lim}~\lambda_{n}=\mu $ with $\mu <+\infty$ and $\Vert  u_{n}\Vert +\Vert v_{n}\Vert\rightarrow 0 $ as $n\rightarrow+\infty$. Let $w_{n}=\frac{u_{n}}{\Vert u_{n}\Vert +\Vert v_{n}\Vert}$, $z_{n}=\frac{v_{n}}{\Vert u_{n}\Vert +\Vert v_{n}\Vert}$ and $(w_{n},z_{n})$ should be the solutions of the following problem
    \begin{center}
    	$\begin{cases}
    		S_{k}(D^{2}(w_{n}))=\lambda_{n}^{k}\frac{g^{k}(-u_{n},-v_{n})}{(\Vert u_{n}\Vert +\Vert v_{n}\Vert)^{k}}\qquad &in \quad\Omega\\ 
    		S_{k}(D^{2}(z_{n}))=\lambda_{n}^{k}\frac{h^{k}(-u_{n},-v_{n})}{(\Vert u_{n}\Vert +\Vert v_{n}\Vert)^{k}}\qquad &in \quad\Omega\\ 
    		w_{n}=z_{n}=0	\qquad &on \quad\partial\Omega
    	\end{cases}$.
    \end{center}
In view of $g_{0}=h_{0}=0$, like that of (2), we obtain that for some sub-sequence $(w_{n},z_{n})\rightarrow (w_{0},z_{0})$ in $C(\overline{\Omega})$ as $n\rightarrow+\infty$ and $(w_{0},z_{0})$ is generalized solution of \cref{eq:CSK10}. Then we obtain $w_{0}=z_{0}\equiv0$, which contradicts $\Vert w_{0}\Vert +\Vert z_{0}\Vert =1$. Therefore, $\mathcal{C}$ is unbounded in the direction of $\lambda$. 

In addition, we also have that $\mathcal{C}$ joins to $(+\infty,(0,0))$. Otherwise, there exists a positive constant $\rho $ and $(\lambda_{m},(u_{m},v_{m}))\in \mathcal{C}$ such that $\lambda_{m}\rightarrow+\infty$ as $m\rightarrow+\infty$ and $\Vert u_{m}\Vert+\Vert u_{m}\Vert\ge \rho $ for any $n\in \mathbb{N}$. Furthermore, since $g_{\infty}=h_{\infty}\in (0,+\infty)$, it follows that there exists a positive constant $\sigma$ such that
    \begin{center}
    	$\frac{g^{k}(-u_{m},-v_{m})}{\vert v_{m}\vert^{k}}\ge\frac{g^{k}(-u_{m},-v_{m})}{\Vert v_{m}\Vert^{k}} \ge \frac{g^{k}(-u_{m},-v_{m})}{(\Vert u_{m}\Vert+\Vert v_{m}\Vert)^{k}}\ge \sigma$
    \end{center} 
and 
    \begin{center}
    	$\frac{h^{k}(-u_{m},-v_{m})}{\vert u_{m}\vert^{k}}\ge\frac{h^{k}(-u_{m},-v_{m})}{\Vert u_{m}\Vert^{k}} \ge \frac{h^{k}(-u_{m},-v_{m})}{(\Vert u_{m}\Vert+\Vert v_{m}\Vert)^{k}}\ge \sigma$
    \end{center} 
for any $x\in \Omega$ and all $n\in \mathbb{N}$. So we have $\underset{m\rightarrow+\infty}{lim}\frac{\lambda_{m}^{k}g^{k}(-u_{m},-v_{m})}{\vert v_{m}\vert ^{k}}=\underset{m\rightarrow+\infty}{lim}\frac{\lambda_{m}^{k}h^{k}(-u_{m},-v_{m})}{\vert u_{m}\vert ^{k}}=+\infty$, by \cref{pro:priori2} if necessary, for n large we have $\Vert u_m\Vert +\Vert v_{m}\Vert\le C$ where $C>0$ is a constant and $\frac{\lambda_{m}^{k}g^{k}(-u_{m},-v_{m})}{\vert v_{m}\vert^{k}}\ge \overline{\lambda}^{k}>\lambda_{1}^{k}$ and $\frac{\lambda_{m}^{k}h^{k}(-u_{m},-v_{m})}{\vert u_{m}\vert^{k}}\ge \overline{\lambda}^{k}>\lambda_{1}^{k}$ for some constant $\overline{\lambda}>\lambda_{1}$. Let $(w,z)$ satisfy
    \begin{center}
    	$\begin{cases}
    		S_{k}(D^{2}w)=\lambda_{1}^{k}\vert z\vert^{k}\qquad &in \quad\Omega\\ 
    		S_{k}(D^{2}z)=\lambda_{1}^{k}\vert w\vert^{k}\qquad &in \quad\Omega\\ 
    		w=z=0	\qquad &on \quad\partial\Omega	
    	\end{cases}$.
    \end{center}
By scaling if necessary, for $m$ large enough, we can assume $u_{m}(x)<w(x)$ and $v_{m}(x)<z(x)$ for all $x\in \Omega$. Let $\delta^{*},\delta^{**}>0$ be the maximal such that $u_{m}-\delta^{*}w\le 0$, $v_{m}-\delta^{**}z\le 0$ in $\Omega$. Similar to (1), we will get contrary. So $\mathcal{C}$ joins to $(+\infty,(0,0))$. So \cref{eq:CSK1} has at least one nontrivial solution for every $\lambda\in (\lambda_{0}/f_{\infty},+\infty)$.

(5): Let
\begin{center} 
    $g^{k}_{n}(s,t)=
    \begin{cases}
		\vert\frac{1}{n} t\vert ^{k}+\overline{g}^{n}(s,t),\qquad&(\vert s\vert +\vert t\vert )\in [o,\frac{1}{n}]\\ 
	g^{k}(s,t),\qquad&(\vert s\vert +\vert t\vert )\in \left[\frac{2}{n},+\infty\right)
    \end{cases}$
\end{center}
and
\begin{center} 
    $h^{k}_{n}(s,t)=
    \begin{cases}
		\vert\frac{1}{n} s\vert ^{k}+\overline{h}^{n}(s,t),\qquad&(\vert s\vert +\vert t\vert )\in [o,\frac{1}{n}]\\ 
		h^{k}(s,t),\qquad&(\vert s\vert +\vert t\vert )\in \left[\frac{2}{n},+\infty\right)
    \end{cases}$
\end{center}
be continuous and $\underset{\vert s\vert +\vert t\vert\rightarrow 0}{lim}\frac{\overline{g}^{n}(s,t)}{\vert t\vert^{k}}=0$, $\underset{\vert s\vert +\vert t\vert\rightarrow 0}{lim}\frac{\overline{h}^{n}(s,t)}{\vert s\vert ^{k}}=0$. Then we consider the following problem
\begin{equation}\label{eq:CSK121}
	\begin{cases}
		S_{k}(D^{2}u)=\lambda^{k}g^{k}_{n}(-u,-v)\qquad&in \quad\Omega \\ 
		S_{k}(D^{2}v)=\lambda^{k}h^{k}_{n}(-u,-v)\qquad&in \quad\Omega \\ 
		u=v=0\qquad&on\quad\partial\Omega
	\end{cases}.
\end{equation} 
It is easy to show that $\underset{n\rightarrow+\infty}{lim}g_{n}(s,t)=g(s,t)$, $\underset{n\rightarrow+\infty}{lim}h_{n}(s,t)=h(s,t)$, $g_{n,\infty}=h_{n,\infty}=f_{\infty}=+\infty$ and $g_{n,0}=h_{n,0}=\frac{1}{n}$. Then by (3), there exists a continuum $C^n$ of admissible solutions of \cref{eq:CSK121} emanating from $(n\lambda_1,(0,0))$. Taking $z^{*}=(+\infty ,(0,0))$, we have that $z^{*}\in \underset{n\rightarrow+\infty}{lim}\mathcal{C}^{n}$. The compactness of $H$ implies that $(\bigcup_{n=1}^{+\infty}\mathcal{C}^{n})\cap B_{R}$ is pre-compact, where $B_{R}=\left\{z\in \mathbb{R}\times E:\Vert z\Vert_{\mathbb{R}\times E}<R\right\}$ for any $R>0$. Using lemma 2.5 in \cite{dai_two_2016}, we obtain that $\mathcal{C}:=\underset{n\rightarrow+\infty}{limsup}~\mathcal{C}^{n}$ is an unbounded connected set such that $z^{*}\in \mathcal{C}$ and $C$ is the solution set of \cref{eq:SKS}.

We claim that $(0,(0,0)) $ is the unique blow up point of $C$. For if not, there exist $\lambda_{*}>0$ and $(\lambda_n,(u_n,v_n))\in C$ satisfying that $\underset{n\rightarrow+\infty}{\lambda_n}=\lambda_{*}$ and $\underset{n\rightarrow+\infty}{\Vert u_n\Vert +\Vert v_n\Vert}=+\infty$. But since $g_\infty=h_\infty=+\infty$, then by \cref{pro:priori2}, there exists a constant $M>0$ such that $\Vert u_n\Vert +\Vert v_n\Vert\le M$ for n large enough, contrary.\\
(6): Let
\begin{center} 
    $g^{k}_{n}(s,t)=
    \begin{cases}
		\vert n t\vert ^{k}+\overline{g}^{n}(s,t),\qquad&(\vert s\vert +\vert t\vert )\in [o,\frac{1}{n}]\\ 
	g^{k}(s,t),\qquad&(\vert s\vert +\vert t\vert )\in \left[\frac{2}{n},+\infty\right)
    \end{cases}$
\end{center}
and
\begin{center} 
    $h^{k}_{n}(s,t)=
    \begin{cases}
		\vert n s\vert ^{k}+\overline{h}^{n}(s,t),\qquad&(\vert s\vert +\vert t\vert )\in [o,\frac{1}{n}]\\ 
		h^{k}(s,t),\qquad&(\vert s\vert +\vert t\vert )\in \left[\frac{2}{n},+\infty\right)
    \end{cases}$
\end{center}
be continuous and $\underset{\vert s\vert +\vert t\vert\rightarrow 0}{lim}\frac{\overline{g}^{n}(s,t)}{\vert t\vert^{k}}=0$, $\underset{\vert s\vert +\vert t\vert\rightarrow 0}{lim}\frac{\overline{h}^{n}(s,t)}{\vert s\vert ^{k}}=0$. Then we consider the following problem
\begin{equation}\label{eq:CSK12}
	\begin{cases}
		S_{k}(D^{2}u)=\lambda^{k}g^{k}_{n}(-u,-v)\qquad&in \quad\Omega \\ 
		S_{k}(D^{2}v)=\lambda^{k}h^{k}_{n}(-u,-v)\qquad&in \quad\Omega \\ 
		u=v=0\qquad&on\quad\partial\Omega
	\end{cases}.
\end{equation} 
It is easy to show that $\underset{n\rightarrow+\infty}{lim}g_{n}(s,t)=g(s,t)$, $\underset{n\rightarrow+\infty}{lim}h_{n}(s,t)=h(s,t)$, $g_{n,\infty}=h_{n,\infty}=f_{\infty}\in (0,\infty)$ and $g_{n,0}=h_{n,0}=n$. By (1), we get that there exists an unbounded continuum $\mathcal{C}^{n}$ of the set of admissible solution of  \cref{eq:CSK12} emanating from $(\lambda_{1}/n,(0,0))$ such that $\mathcal{C}^{n}\subset \mathbb{R}\times E\cup (\lambda_{1}/n,(0,0))$ and $\mathcal{C}^{n}$ links to $(\frac{\lambda_{1}}{f_{\infty}},\infty)$. Taking $z^{*}=(0,(0,0))$, we have that $z^{*}\in \underset{n\rightarrow+\infty}{lim}\mathcal{C}^{n}$. The compactness of $H$ implies that $(\bigcup_{n=1}^{+\infty}\mathcal{C}^{n})\cap B_{R}$ is pre-compact, where $B_{R}=\left\{z\in \mathbb{R}\times E:\Vert z\Vert_{\mathbb{R}\times E}<R\right\}$ for any $R>0$. Using lemma 2.5 in \cite{dai_two_2016}, we obtain that $\mathcal{C}:=\underset{n\rightarrow+\infty}{limsup}~\mathcal{C}^{n}$ is an unbounded connected set such that $z^{*}\in \mathcal{C}$. In other words, $\mathcal{C}$ links $(0,(0,0))$ to $(\frac{\lambda_{1}}{f_{\infty}},\infty)$.

(7):Define 
	\begin{center}
		$g^{k}_{n}(s,t)=
		\begin{cases}
			n^{k}\vert t\vert ^{k}+\widetilde{g}^{n}(s,t),\qquad&(\vert s\vert +\vert t\vert )\in [0,\frac{1}{n}]\\ 	
			g^{k}(s,t),\qquad&(\vert s\vert +\vert t\vert)\in \left[\frac{2}{n},+\infty\right)
		\end{cases}$
	\end{center}
	and
	\begin{center}
		$	h^{k}_{n}(s,t)=
		\begin{cases}
			n^{k}\vert s\vert ^{k}+\widetilde{h}^{n}(s,t),\qquad&(\vert s\vert +\vert t\vert )\in [0,\frac{1}{n}]\\ 	
			h^{k}(s,t),\qquad&(\vert s\vert +\vert t\vert )\in \left[\frac{2}{n},+\infty\right)
		\end{cases}$
	\end{center}
be continuous and $\underset{\vert s\vert +\vert t\vert\rightarrow 0}{lim}\frac{\widetilde{g}^{n}(s,t)}{\vert t\vert^{k}}=0$, $\underset{\vert s\vert +\vert t\vert\rightarrow0}{lim}\frac{\widetilde{h}^{n}(s,t)}{\vert s\vert ^{k}}=0$. Then we consider the following problem
	\begin{equation}\label{eq:CSK13}
		\begin{cases}
			S_{k}(D^{2}u)=\lambda^{k}g^{k}_{n}(-u,-v)\qquad&in \quad\Omega \\ 
			S_{k}(D^{2}v)=\lambda^{k}h^{k}_{n}(-u,-v)\qquad&in \quad\Omega \\ 
			u=v=0\qquad&on\quad\partial\Omega
		\end{cases}
	\end{equation} 
It is easy to show that $\underset{n\rightarrow+\infty}{lim}g_{n}(s,t)=g(s,t)$, $\underset{n\rightarrow+\infty}{lim}h_{n}(s,t)=h(s,t)$, $g_{n,\infty}=h_{n,\infty}=0$ and $g_{n,0}=h_{n,0}=n$. By \cref{prop:SKEBifur1}, we get that there exists an unbounded continuum $\mathcal{C}^{n}$ of the set of admissible solution of \cref{eq:CSK13} emanating $(\lambda_{1}/n,(0,0))$ such that $\mathcal{C}^{n}\subset \mathbb{R}\times E\cup (\lambda_{1}/n,(0,0))$. Taking $z^{*}=(0,(0,0))$, we have that $z^{*}\in \underset{n\rightarrow+\infty}{lim}\mathcal{C}^{n}$. The compactness of $H$ implies that $(\bigcup_{n=1}^{+\infty}\mathcal{C}^{n})\cap B_{R}$ is precompact, where $B_{R}=\left\{z\in \mathbb{R}\times E:\Vert z\Vert_{\mathbb{R}\times E}<R\right\}$ for any $R>0$. Using Lemma 2.5 in \cite{dai_two_2016}, we obtain that $\mathcal{C}:=\underset{n\rightarrow+\infty}{limsup}~\mathcal{C}^{n}$ is an unbounded connected set such that $z^{*}\in \mathcal{C}$.

Since $\mathcal{C}$ is unbounded, we have that there exists $(\lambda_{n},(u_{n},v_{n}))\in \mathcal{C}$ such that $\lambda_{n}+\Vert u_{n}\Vert +\Vert v_{n}\Vert \rightarrow+\infty$ as $n\rightarrow+\infty$.

We claim that $\lambda_{n}\rightarrow+\infty$ and $\Vert u_{n}\Vert +\Vert v_{n}\Vert \rightarrow+\infty$ as $n\rightarrow+\infty$. 

Otherwise, $\lambda_{n}\rightarrow+\infty$ or $\Vert u_{n}\Vert +\Vert v_{n}\Vert \rightarrow+\infty$ as $n\rightarrow+\infty$. If $\lambda_{n}\rightarrow+\infty$ as $n\rightarrow+\infty$ and $\Vert u_{n}\Vert +\Vert v_{n}\Vert\le M$ for some positive constant $M$. 
    
Then for $n$ large enough, there exist a positive constant $\sigma$ such that 
    \begin{center}
    	$g^{k}(-u_{n},-v_{n})/\vert v_{n}\vert^{k} \ge \sigma,\quad h^{k}(-u_{n},-v_{n})/\vert u_{n}\vert^{k} \ge \sigma~for ~any ~n\in \mathbb{N}$.
    \end{center}
So we have $\frac{\lambda_{n}^{k}g^{k}(-u_{n},-v_{n})}{\vert v_{n}\vert ^{k}}>\overline{\lambda}^{k}>\lambda_{1}^{k}$ and $\frac{\lambda_{n}^{k}h^{k}(-u_{n},-v_{n})}{\vert u_{n}\vert ^{k}}>\overline{\lambda}^{k}>\lambda_{1}^{k}$ for some constant $\overline{\lambda}>\lambda_{1}$. Argue as in (1), we will get contradiction. Furthermore, if $\lambda_{n}$ is bounded and $\Vert u_{n}\Vert +\Vert v_{n}\Vert \rightarrow+\infty$ as $n\rightarrow+\infty$. Let $w_{n}=\frac{u_{n}}{\Vert u_{n}\Vert+\Vert v_{n}\Vert}$ and $z_{n}=\frac{v_{n}}{\Vert u_{n}\Vert+\Vert v_{n}\Vert }$ and then $(w_{n},z_{n})$ satisfy
    \begin{center}
    	$\begin{cases}
    		S_{k}(D^{2}w_{n})=\lambda_{n}^{k}\frac{g^{k}(-u_{n},-v_{n})}{	(\Vert u_{n}\Vert+\Vert v_{n}\Vert)^{k}}\qquad &in \quad\Omega\\ 
    		S_{k}(D^{2}z_{n})=\lambda_{n}^{k}\frac{h^{k}(-u_{n},-v_{n})}{	(\Vert u_{n}\Vert+\Vert v_{n}\Vert)^{k}}\qquad &in \quad\Omega\\ 
    		w_{n}=z_{n}=0\qquad&on\quad\partial\Omega
    	\end{cases}$.
    \end{center}
Argue as in (2), we will also get contradiction. So $\mathcal{C}$ joins to $(+\infty,\infty)$.
\end{proof}

\subsection{Proof of \cref{thm:SKS2,thm:SKS3}}
Now, we begin to prove \cref{thm:SKS2}.

\newenvironment{prf3}{{\noindent\bf Proof of \cref{thm:SKS2}.}}{\hfill $\square$\par}
\begin{prf3}

In view of $g_{0}=h_{0}=+\infty$, from the argument of (6) in the proof of \cref{thm:SKS1}, we know that there is an unbounded continuum $\mathcal{C}$ of admissible solution to \cref{eq:SKS} emanating from $(0,(0,0))\in \mathcal{C}$. Since $g_{0}=h_{0}=+\infty,~g_{\infty}=h_{\infty}=+\infty$ and the sign condition $g(s,t),~h(s,t)>0$ for any $s,t>0$. If $\mathcal{C}$ is unbounded in the direction of $\lambda$, then argument as (1), we can get contradiction. It follows that $\mathcal{C}$ is unbounded in the direction of $E$.

We claim that $(0,(0,0))$ is the unique blow up point of $\mathcal{C}$. 

Otherwise, there must exist a blow up point $(\widetilde{\lambda},(0,0))$ with $\widetilde{\lambda}\in (0,+\infty )$ of $\mathcal{C}$. And then there exist $\left\{(\lambda_{n},(u_{n},v_{n}))\right\}$ such that $\underset{n\rightarrow+\infty}{lim}\lambda_{n}=\widetilde{\lambda}$ and $\underset{n\rightarrow+\infty}{lim}(\Vert u_{n}\Vert +\Vert v_{n}\Vert)=+\infty$. Since $g_{\infty}=h_{\infty}=+\infty$, we have for any $M>0$, for $n$ large enough,
    \begin{center}
    	$\frac{g^{k}(-u_{n},-v_{n})}{\vert v_{n}\vert^{k} }\ge M$ and $\frac{h^{k}(-u_{n},-v_{n})}{\vert u_{n}\vert ^{k} }\ge M$.
    \end{center} 
Then, we get $\underset{n\rightarrow+\infty}{lim}\frac{\lambda_{n}^{k}g^{k}(-u_{n},-v_{n})}{\vert v_{n}\vert ^{k}}=\underset{n\rightarrow+\infty}{lim}\frac{\lambda_{n}^{k}h^{k}(-u_{n},-v_{n})}{\vert u_{n}\vert ^{k}}=+\infty$, by \cref{pro:priori2} we have $\Vert u_n\Vert +\Vert v_{n}\Vert\le C$ for a constant $C>0$ which is contradiction with $\underset{n\rightarrow+\infty}{lim}(\Vert u_{n}\Vert +\Vert v_{n}\Vert)=+\infty$. Therefore, $(0,(0,0))$ is the unique blow-up point.

Let $\lambda^{*}=sup_{(\lambda,(u,v))\in \mathcal{C}}\left\{\lambda\right\}$. It is clearly that $\lambda^{*}\in (0,+\infty)$ so \cref{eq:SKS} has at least two admissible solutions for $\lambda>0$ small enough. Obviously, there exists at least one admissible solution at $\lambda=\lambda^{*}$. We divide the rest of proof into two steps. 

{\bf Step 1 }. For $\lambda>\lambda^{*}$, \cref{eq:CSK1} has no solution.

Assume hat there exists an admissible solution $(\overline{u},\overline{v})$ of \cref{eq:CSK1} for some $\overline{\lambda}>\lambda^{*}$. By \cref{pro:priori2}, we have $\Vert \overline{u}\Vert+\Vert \overline{v}\Vert\le C$ for some positive constant $C$ that only depend on $f,k~and ~\Omega$. Let $\mathcal{C}'=\left\{(\lambda,(u,v))\in \mathcal{C}:u\ge\overline{u}~and~v\ge\overline{v}\right\}$.

We $claim~\mathcal{C}'=\mathcal{C}$.

It is sufficient  to show that $\mathcal{C}'$ is not empty, open and closed relative to $\mathcal{C}$. It is clearly that $\mathcal{C}$ is nonempty since $(0,(0,0))\in \mathcal{C}$. The closeness of $\mathcal{C}'$ is obvious by the closeness of $\mathcal{C}$ and the definition of $\mathcal{C}'$. So we only need to show that $\mathcal{C}'$ is open relative to $\mathcal{C}$. For any $(\lambda_{0},(u_{0},v_{0}))\in \mathcal{C}'$, we have that $u_{0}>\overline{u}~in ~\Omega$ and $v_{0}>\overline{v}~in ~\Omega$. For if not, there exists $x_{0}\in \Omega$ such that $u_{0}(x_{0})=\overline{u}(x_{0})$ or $v_{0}(x_{0})=\overline{v}(x_{0})$. Without loss of generality,we can assume that$u_{0}(x_{0})=\overline{u}(x_{0})$. Thus $x_{0}$ is a locally minimal point of $u_{0}-\overline{u}$, then $D_{ij}[u_{0}-\overline{u}](x_{0})$ is positive semi-definite. Since $u_{0},~\overline{u}\in \Phi^{k}(\Omega)$, then by \cite{bhattacharya_maximum_2021},
    \begin{center}
    	$S_{k}(D^{2}u_{0}(x_{0}))\ge S_{k}(D^{2})\overline{u}(x_{0})$, i.e. $\lambda_{0}^{k}g^{k}(-u_{0}(x_{0}),-v_{0}(x_{0}))\ge \overline{\lambda}^{k}g^{k}(-\overline{u}(x_{0}),-\overline{v}(x_{0}))$
    \end{center}
But since $\overline{\lambda}>\lambda^{*}\ge \lambda_{0}$, $u_{0}(x_{0})=\overline{u}(x_{0})$, $v_{0}(x_{0})\ge \overline{v}(x_{0})$ and $g$ is non-decreasing of $t$. The conclusion is impossible. Therefore, $u_{0}>\overline{u}~in ~\Omega$ and $v_{0}>\overline{v}~in ~\Omega$.

For any $\varepsilon>0$ small enough, we choose an open neighborhood $B\subset \mathbb{R}\times E$ of $(\lambda_{0},(u_{0},v_{0}))$ such that $\Vert u-u_{0}\Vert +\Vert v-v_{0}\Vert<\varepsilon$ for any $(\lambda,(u,v))\in B\cap \mathcal{C}$. Then we have $u\ge \overline{u}$ ~and ~$v\ge \overline{v}$ in $\Omega_{\varepsilon}:=\left\{x:u_{0}\ge \overline{u}+\varepsilon ~and~ v_{0}\ge \overline{v}+\varepsilon\right\}$.	

We claim that there exists $\varepsilon$ small enough such that $u\ge \overline{u}$ ~and ~$v\ge \overline{v}$ in $\Omega':=\Omega\backslash\Omega_{\varepsilon}$.

In fact, for $\varepsilon$ small enough, let $L_{u}=\Sigma_{i,j=1}^{n}F_{ij}(u)D_{ij}$, then $L_{u}$ is linear elliptic operator if $u\in \Phi^{k}(\Omega)$. By \cite{caffarelli_dirichlet_1985}
\begin{center}
	$L_{u}(v-u)=\Sigma_{i,j=1}^{n}F_{ij}(u)D_{ij}(v-u)\ge F_{k}(v)-F_{k}(u)$
\end{center}
for any $u,~v\in \Phi^{k}(\Omega)$. Thus, by linearity $L_{u}(u-v)\le F_{k}(u)-F_{k}(v)$. Let $w_{1}(x)=u(x)-\overline{u}(x)$, $w_{2}(x)=v(x)-\overline{v}(x)$, then we have
\begin{center}
	$\begin{cases}
		L_{u}(u-\overline{u})-c_{11}(x)w_{1}(x)-c_{12}w_{2}(x)\le 0,\qquad&x\in\Omega^{'},\\
		L_{v}(v-\overline{v})-c_{21}(x)w_{1}(x)-c_{22}w_{2}(x)\le 0,\qquad&x\in\Omega^{'},\\
		w_{1}(x)\ge 0,w_{2}(x)\ge 0,\qquad&x\in \partial\Omega^{'},
	\end{cases}$
\end{center}
where
\begin{center}
	$c_{11}(x)=\frac{\lambda g(-u(x),-v(x))-\lambda g(-\overline{u}(x),-v(x))}{u(x)-\overline{u}(x)}$, $c_{12}(x)=\frac{ \lambda g(-\overline{u}(x),-v(x))-\overline{\lambda}g(-\overline{u}(x),-\overline{v}(x))}{v(x)-\overline{v}(x)}$
\end{center}
and
\begin{center}
	$c_{21}(x)=\frac{\lambda h(-u(x),-v(x))-\overline{\lambda}h(-\overline{u}(x),-v(x))}{u(x)-\overline{u}(x)}$, $c_{22}(x)=\frac{\overline{\lambda}h(-\overline{u}(x),-v(x))-\overline{\lambda}h(-\overline{u}(x),-\overline{v}(x))}{v(x)-\overline{v}(x)}$
\end{center}
Then by \cite{de_figueredo_monotonicity_1994}, for $\varepsilon $ small enough, we have $w_{1}(x)=u(x)-\overline{u}(x)\ge 0$ and $w_{2}(x)=v(x)-\overline{v}(x)\ge 0$ in $\Omega^{'}$. So $\mathcal{C}'$ is open relative to $\mathcal{C}$. Therefore, we get $\mathcal{C}'=\mathcal{C}$.

Since $\mathcal{C}'=\mathcal{C}$ is unbounded in the direction of X, there exists $(\lambda_{n},(u_{n},v_{n}))\in \mathcal{C}'$ such that $\Vert u_{n}\Vert+\Vert v_{n}\Vert\rightarrow+\infty$ as $n\rightarrow+\infty$. So we have $\Vert \overline{u}\Vert+\Vert \overline{v}\Vert=+\infty$, which is a contradiction.

{\bf Step 2 }. For $\lambda<\lambda_{*}$, \cref{eq:SKS} has at least two admissible solutions.

Let$\widetilde{\lambda}=sup\left\{\lambda>0:\exists (u_{1},v_{1})\not\equiv (u_{2},v_{2})~s.t.~ (\lambda,(u_{1},v_{1})),(\lambda,(u_{2},v_{2}))\in \mathcal{C}\right\}$. Suppose on the contrary, that $\widetilde{\lambda}<\lambda^{*}$. Take $\hat{\lambda}\in \left[\widetilde{\lambda},\lambda^{*}\right)$ such that there exists a unique $(\hat{u},\hat{v})$ such that $(\hat{\lambda},(\hat{u},\hat{v}))\in \mathcal{C}$. Define
    \begin{center}
    	$\mathcal{C}''=\left\{(\lambda,(u,v))\in \mathcal{C}:\lambda\le \hat{\lambda}\right\}$.
    \end{center}
Clearly, $\mathcal{C}''$ is connected and closed. Then let $\overline{\mathcal{C}}=\left\{(\lambda,(u,v))\in \mathcal{C}'':u\ge u''~or~v\ge v''\right\}$, where $(\lambda'',(u'',v''))$ is a solution of \cref{eq:CSK1} and $\lambda''>\hat{\lambda}$. So we can repeat the argument of Step 1 to show the \cref{eq:SKS} have no solution for $\lambda>\hat{\lambda}$, which is a contradiction.
\end{prf3}

\vskip 0.2in

Next, we begin to prove \cref{thm:SKS3}.

\newenvironment{prf4}{{\noindent\bf Proof of \cref{thm:SKS3}.}}{\hfill $\square$\par}
\begin{prf4}
Let 
    \begin{center}
    	$g^{k}_{n}(s,t)=	
    	\begin{cases}
    		\frac{1}{n}	\vert t\vert ^{k}+\overline{g}^{n}(s,t) \qquad&s\in [0,\frac{1}{n}]\\ \\
    		g^{k}(s,t)\qquad&s\in \left[\frac{2}{n},+\infty\right)
    	\end{cases}$
    \end{center}
    and
    \begin{center}
    	$h^{k}_{n}(s,t)=	
    	\begin{cases}
    		\frac{1}{n}	\vert s\vert ^{k}+\overline{h}^{n}(s,t) \qquad&s\in [0,\frac{1}{n}]\\ \\
    		h^{k}(s,t)\qquad&s\in \left[\frac{2}{n},+\infty\right)
    	\end{cases}$
    \end{center}
where $\underset{\vert s\vert +\vert t\vert \rightarrow0}{lim}\frac{\overline{g}^{k}_{n}(s,t)}{\vert t\vert^{k}}=0$ and $\underset{\vert s\vert +\vert t\vert \rightarrow0}{lim}\frac{\overline{h}^{k}_{n}(s,t)}{\vert s\vert ^{k}}=0$. We have $\underset{n\rightarrow+\infty}{lim}g_{n}(s,t)=g(s,t)$, $\underset{n\rightarrow+\infty}{lim}h_{n}(s,t)=h(s,t)$  and $g^{n,0}=h^{n,0}=\frac{1}{n}$, $g_{n,\infty}=h_{n,\infty}=0$. Then we consider the following equation 
    \begin{equation}\label{eq:CSK16}
    	\begin{cases}
    		S_{k}(D^{2}u)=\lambda^{k}g^{k}_{n}(-u,-v)\qquad&in \quad\Omega \\ 
    		S_{k}(D^{2}v)=\lambda^{k}h^{n}(u,v)\qquad&in \quad\Omega \\ 
    		u=v=0\qquad&on\quad\partial\Omega
    	\end{cases}.
    \end{equation}
By (2) we get that continuum  $\mathcal{C}_{n}$ of admissible solutions emanating $(n\lambda_{1},(0,0))$ and is unbounded. 

We claim that if $\left\{\lambda_{n},(u_{n},v_{n})\right\} \in \mathcal{C}_{n}$ satisfy $\lambda_{n}\rightarrow+\infty$ and $\vert u_{n}\vert +\vert v_{n}\vert \nrightarrow0$, then $\Vert u_{n}\Vert+\Vert v_{n}\Vert \rightarrow+\infty$. For if not, suppose that $0<\Vert u_{n}\Vert+\Vert v_{n}\Vert\le M$ for some constant $M>0$. Then, we have that there exists a positive constant $\sigma$ such that 
    \begin{center}
    	$\frac{g^{k}(-u_{n},-v_{n})}{\vert v_{n}\vert ^{k}}\ge \frac{g^{k}(-u_{n},-v_{n})}{\Vert v_{n}\Vert^{k} }\ge \frac{g^{k}(-u_{n},-v_{n})}{(\Vert u_{n}\Vert +\Vert v_{n}\Vert)^{k}}\ge \sigma$
    \end{center}
    and
    \begin{center}
    	$\frac{h^{k}(-u_{n},-v_{n})}{\vert v_{n}\vert ^{k}}\ge \frac{h^{k}(-u_{n},-v_{n})}{\Vert v_{n}\Vert^{k} }\ge \frac{h^{k}(-u_{n},-v_{n})}{(\Vert u_{n}\Vert +\Vert v_{n}\Vert)^{k}}\ge \sigma$
    \end{center}
for any $x\in \Omega$ and all $n\in \mathbb{N}$. So we have $\frac{\lambda_{n}^{k}f^{k}(-u_{n},-v_{n})}{\vert v_{n}\vert ^{k}}> \lambda_{1}^{k}$ and $\frac{\lambda_{n}^{k}g^{k}(-u_{n},-v_{n})}{\vert u_{n}\vert ^{k}}>\lambda_{1}^{k}$ for $n$ large enough. From the argument as (1), we will get contradiction. So, $\mathcal{C}_{n}$ link $(n\lambda_{0},(0,0))$ to $(+\infty,\infty )$ in $M$. Now, taking $z^{1}_{n}=(n\lambda_{0},(0,0))$, $z^{2}_{n}=(+\infty,+\infty)$. The compactness of $H$ in implies that $(\bigcup_{n=1}^{+\infty} \mathcal{C}_{n})\cap B_{R}$ is precompact. So by Lemma 4.11 in \cite{dai_eigenvalue_2015} there exists unbounded component $\mathcal{C}$ such that $(+\infty,(0,0))\in \mathcal{C}$ and $(+\infty,\infty)\in \mathcal{C}$.

Let $\lambda_{*}=\underset{(\lambda,(u,v))\in \mathcal{C}}{inf}\left\{\lambda\right\}$. Obviously, there exists at least one nontrivial admissible solution at $\lambda=\lambda_{*}$. We divide the rest of the proof into two steps.

{\bf Step 1 }. When $\lambda<\lambda_{*}$ (1.1) has no solution.

On the contrary, suppose that there exists $\underline{\lambda}$ such that $(\underline{u},\underline{v})$ is nontrivial admissible solution of \cref{eq:CSK1}. By \cref{prop:priori}, we have $\Vert \underline{u}\Vert+\Vert \underline{v}\Vert\le C$ for some positive constant $C$ that only depend on $g,~h,~k~and ~\Omega$. Let $\mathcal{C}'=\left\{(\lambda,(u,v))\in \mathcal{C}:u\le\underline{u}~and~v\le\underline{v}\right\}$.

We $claim~\mathcal{C}'=\mathcal{C}$.

It is sufficient to show that $\mathcal{C}'$ is not empty, open and closed relative to $\mathcal{C}$. It is clearly that $\mathcal{C}$ is nonempty since $g,~h$ are coercive and $(+\infty,\infty)\in \mathcal{C}$. In fact, there exists $(\lambda_{n},(u_{n},v_{n}))$ such that $\lambda_{n}\rightarrow+\infty$ and $\Vert u_{n}\Vert +\Vert v_{n}\Vert \rightarrow+\infty$ as $n\rightarrow+\infty$. Then for $n$ large enough, we have
    \begin{center}
    	$\lambda_{n}^{k}g^{k}(-u_{n},-v_{n})>\underline{\lambda}^{k}g^{k}(-\underline{u},-\underline{v})$,
    \end{center}
    and
    \begin{center}
    	$\lambda_{n}^{k}h^{k}(-u_{n},-v_{n})>\underline{\lambda}^{k}h^{k}(-\underline{u},-\underline{v})$,
    \end{center}
then by comparison principle we have for sufficient large $n$, $u_{n}\le\underline{u}$ and $v_{n}\le\underline{v}$. The closeness of $\mathcal{C}'$ is obvious by the closeness of $\mathcal{C}$ and the definition of $\mathcal{C}'$. So we only need to show that $\mathcal{C}'$ is open relative to $\mathcal{C}$.
For any $(\lambda_{0},(u_{0},v_{0}))\in \mathcal{C}'$, we have that $u_{0}<\underline{u}~in ~\Omega$ and $v_{0}<\underline{v}~in ~\Omega$. For if not, There exists $x_{0}\in \Omega$ such that $u_{0}(x_{0})=\underline{u}(x_{0})$ or $v_{0}(x_{0})=\underline{v}(x_{0})$. Without loss of generality, we can assume that $u_{0}(x_{0})=\underline{u}(x_{0})$, then we have $x_{0}$ is a locally minimum of $\underline{u}-u_{0}$, then $D_{ij}[\underline{u}-u_{0}](x_{0})$ is positive semidefinite. Since $u_{0},~\underline{u}\in \Phi^{k}(\Omega)$, then by \cite{bhattacharya_maximum_2021}, 
    \begin{center}
        $S_{k}(D^{2}\underline{u}(x_{0}))\ge S_{k}(D^{2})u_{0}(x_{0})$, i.e. $\underline{\lambda}^{k}g^{k}(-\underline{u}(x_{0}),-\underline{v}(x_{0}))\ge \lambda_{0}^{k}g^{k}(-u_{0}(x_{0}),-v_{0}(x_{0}))$
    \end{center}
But since $\underline{\lambda}<\lambda^{*}\le \lambda_{0}$, $u_{0}(x_{0})=\underline{u}(x_{0})$,$v_{0}(x_{0})\le \underline{v}(x_{0})$ and $f$ is non-decreasing of $t$. The conclusion is impossible. Therefore, $u_{0}<\underline{u}~in ~\Omega$ and $v_{0}<\underline{v}~in ~\Omega$.

For any $\varepsilon>0$, we choose an open neighborhood $B\subset \mathbb{R}\times E$ of $(\lambda_{0},(u_{0},v_{0}))$ such that $\Vert u-u_{0}\Vert+\Vert v-v_{0}\Vert <\varepsilon$. For any $(\lambda,(u,v))\in B\cap \mathcal{C}$, we have $u\le \underline{u} $ and $v\le \underline{v}$ in $\Omega_{\varepsilon}=\{x\in \Omega:u_{0}(x)\le \overline{u}(x)-\varepsilon~and~v_{0}(x)\le \overline{v}(x)-\varepsilon\}$. 

We claim that there exists $\varepsilon$ small enough such that $u\le \underline{u} $ and $v\le \underline{v}$ in $\Omega':=\Omega\backslash\Omega_{\varepsilon}$.

In fact, let $L_{u}=\Sigma_{i,j=1}^{n}F_{ij}(u)D_{ij}$,then $L_{u}$ is linear elliptic operator for $u\in \Phi^{k}(\Omega)$. By \cite{caffarelli_dirichlet_1985},
    \begin{center}
	   $L_{u}(v-u)=\Sigma_{i,j=1}^{n}F_{ij}(u)D_{ij}(v-u)\ge F_{k}(v)-F_{k}(u)$
    \end{center}
for any $u,~v\in \Phi^{k}(\Omega)$. Hence, by linearity, $L_{u}(u-v)\le F_{k}(u)-F_{k}(v)$. Let $w_{1}(x)=\underline{u}(x)-u(x)$ and $w_{2}(x)=\underline{v}(x)-v(x)$, then we have
\begin{center}
	$\begin{cases}
		L_{\underline{u}}(\underline{u}(x)-u(x))-c_{11}(x)w_{1}(x)-c_{12}w_{2}(x)\le 0,\qquad&x\in\Omega^{'},\\
		L_{\underline{v}}(\underline{v}(x)-v(x))-c_{21}(x)w_{1}(x)-c_{22}w_{2}(x)\le 0,\qquad&x\in\Omega^{'},\\
		w_{1}(x)\ge 0,w_{2}(x)\ge 0,\qquad&x\in \partial\Omega^{'},
	\end{cases}$
\end{center}
where
\begin{center}
	$c_{11}(x)=\frac{\underline{\lambda}g(-\underline{u}(x),-\underline{v}(x))-\underline{\lambda} g(-u(x),-\underline{v}(x))}{\underline{u}(x)-u(x)}$, $c_{12}=\frac{\underline{\lambda} g(-u(x),-\underline{v}(x))-\lambda g(-u(x),-v(x))}{\underline{v}(x)-v(x)}$
\end{center}
and 
\begin{center}
	$c_{21}(x)=\frac{\underline{\lambda}h(-\underline{u}(x),-\underline{v}(x))-\lambda h(-u(x),-\underline{v}(x))}{\underline{u}(x)-u(x)}$, $c_{22}(x)=\frac{\lambda h(-u(x),-\underline{v}(x))-\lambda h(-u(x),-v(x))}{\underline{v}(x)-v(x)}$
\end{center} 
By \cite{de_figueredo_monotonicity_1994}, for $\varepsilon$ small enough, we have $w_{1}(x)=\underline u(x)-u(x)\ge 0$ and $w_{2}(x)=\underline v(x)-v(x)\ge 0$ in $\Omega^{'}$. So $\mathcal{C}'$ is open relative to $\mathcal{C}$. Therefore, we get $\mathcal{C}'=\mathcal{C}$.

So for all $(u,v)\in \mathcal{C}$, we have $u(x)\le  \underline{u}(x)$ and $v(x)\le  \underline{v}(x)$. But $(+\infty,(0,0))\in \mathcal{C}$, $\underline{u}\le 0$ and $\underline{v}\le 0$, so $\underline{u}=\underline{v}\equiv0$ which is in contradiction with  the choose of nontrivial solution $(\underline{u},\underline{v})$. So when $\lambda<\lambda_{*}$, \cref{eq:SKS} has no solution.

{\bf Step 2 }.For $\lambda>\lambda_{*}$, \cref{eq:SKS} has at least two nontrivial admissible solutions.

Let $\widetilde{\lambda}=inf\left\{\lambda>0:\exists (u_{1},v_{1})\not\equiv (u_{2},v_{2})~s.t.~(\lambda,(u_{1},v_{1})),(\lambda,(u_{2},v_{2}))\in \mathcal{C}\right\}$. Suppose on the contrary, that $\widetilde{\lambda}>\lambda_{*}$. Take $\lambda'\in \left(\lambda_{*}, \widetilde{\lambda}\right]$such that there exists a unique $(u,v)$ such that $(\lambda',(u,v))\in \mathcal{C}$. Define
    \begin{center}
    	$\mathcal{C}''=\left\{(\lambda,(u,v))\in \mathcal{C}:\lambda\ge \lambda'\right\}$.
    \end{center}
Clearly, $\mathcal{C}''$ is connected and closed. Then let $\overline{\mathcal{C}}=\left\{(\lambda,(u,v))\in \mathcal{C}'':u\le  u''~and~v\le  v''\right\}$, where $(\lambda'',(u'',v''))$ is a nontrivial solution of (6.1) and $\lambda''<\lambda'$. So we can repeat the argument of Step 1 to show the \cref{eq:SKS} have no solution for $\lambda<\lambda'$, which is a contradiction.
\end{prf4}

\section{Acknowledgments}

\bibliographystyle{abbrv}
\bibliography{ref}

\end{document}